\author{  {\bf Maya Mohsin Ahmed} \\  maya.ahmed@gmail.com}
\title{{\LARGE {\bf Namaste Franklin, 
from the magic squares of Narayana Pandita
}}}
\date{}
\newtheorem{prop}{Proposition}[section]
\newtheorem{lemma}{Lemma}[section]
\newtheorem{defn}{Definition}[section]
\newtheorem{exm}{Example}[section]
\newtheorem{algo}{Algorithm}[section]
\newtheorem{corollary}{Corollary}[section]
\newcounter{magicrownumbers}
\newcommand\rownumber{\stepcounter{magicrownumbers}\arabic{magicrownumbers}}
\begin{document}     % this begins the actual document body
\maketitle           % this actually creates the title block

\begin{abstract}
 Narayana Pandita constructed magic squares as a superimposition of
 two squares, folded together like palms in the Indian greeting,
 Namaste. In this article, we show how to construct Franklin squares
 of every order, as a superimposition of two squares. We also explore
 the myriad of similarities in construction and properties of Franklin
 and Narayana squares.
\end{abstract}

\section{Introduction.} \label{introsection}

A {\em magic square} is a square matrix whose entries are non-negative
integers, such that the sum of the numbers in every row, in every
column, and in each diagonal is the same number called the {\em magic
  sum}. Narayana Pandita, in the thirteenth century, showed how to
construct the magic squares in Table \ref{tabenarayanasquares8n16}
(\cite{datta}, \cite{narayana}).  The well-known squares F1, F2, and
F3, that appear in Table \ref{tabefranklinsquaresf1f3}, were
constructed by Benjamin Franklin, in the eighteenth century
(\cite{mathesis}, \cite{andrews}, \cite{pasles}).  We start with
looking at the mind blowing structure and properties of these squares.
\begin{table}
  {\scriptsize
    \caption{Narayana squares N1 and N2.}
    \label{tabenarayanasquares8n16} 
    \begin{adjustwidth}{-2cm}{-1cm} 
      \[ \begin{array}{ccccccc}
        \begin{array}{|c|c|c|c|c|c|c|c|} \hline
          60 & 53 & 44 & 37 & 4 & 13 & 20 & 29 \\ \hline
          3 & 14 & 19 & 30 & 59 & 54 & 43 & 38 \\ \hline
          58 & 55 & 42 & 39 & 2 & 5 & 18 & 31 \\ \hline
          1 & 16 & 17 & 32 & 57 & 56 & 41 & 40 \\ \hline
          61 & 52 & 45 & 36 & 5 & 12 & 21 & 28 \\ \hline
          6 & 11 & 22 & 27 & 62 & 51 & 46 & 35 \\ \hline
          63 & 50 & 47 & 34 & 7 & 10 & 23 & 26 \\ \hline
          8 & 9 & 24 & 25 & 64 & 49 & 48 & 33 \\ \hline
        \end{array} & & 
        \begin{array}{|c|c|c|c|c|c|c|c|c|c|c|c|c|c|c|c|} \hline
          248 & 233 & 216 & 201 & 184 & 169 & 152 & 137 & 8 & 25 & 40 & 57 & 72 & 89 & 104 & 121 \\ \hline
          7 & 26 & 39 & 58 & 71 & 90 & 103 &122 & 247 & 234 & 215 & 202 & 183 & 170 & 151 & 138 \\ \hline
          246 & 235 & 214 & 203 & 182 & 171 & 150 & 139 & 6 & 27 & 38 & 59 & 70 & 91 & 102 & 123 \\ \hline 
          5 & 28 & 37 & 60 & 69 & 92 & 101 & 124 & 245 & 236 & 213 & 204 & 181 & 172 & 149 & 140 \\ \hline
          244 & 237 & 212 & 205 & 180 & 173 & 148 & 141 & 4 & 29 & 36 & 61 & 68 & 93 & 100 & 125 \\ \hline
          3 & 30 & 35 & 62 & 67 & 94 & 99 & 126 & 243 & 238 & 211 & 206 & 179 & 174 & 147 & 142 \\ \hline
          242 & 239 & 210 & 207 & 178 & 175 & 146 & 143 & 2 & 31 & 34 & 63 & 66 & 95 & 98 & 127 \\ \hline
          1  & 32 & 33 & 64 & 65 & 96 & 97 & 128 & 241 & 240 & 209 & 208 & 177 & 176 & 145 & 144 \\ \hline
          249 & 232 & 217 & 200 & 185 & 168 & 153 & 136 & 9 & 24 & 41 & 56 & 73 & 88 & 105 & 120 \\ \hline
          10 & 23 & 42 & 55 & 74 & 87 & 106 & 119 & 250 & 231 & 218 & 199 & 186 & 167 & 154 & 135 \\ \hline
          251 & 230 & 219 & 198 & 187 & 166 & 155 & 134 & 11 & 22 & 43 & 54 & 75 & 86 & 107 & 118 \\ \hline
          12 & 21 & 44 & 53 & 76 & 85 & 108 & 117 & 252 & 229 & 220 & 197 & 188 & 165 & 156 & 133 \\ \hline
          253 & 228 & 221 & 196 & 189 & 164 & 157 & 132 & 13 & 20 & 45 & 52 & 77 & 84 & 109 & 116 \\ \hline
          14 & 19 & 46 & 51 & 78 & 83 & 110 & 115 & 254 & 227 & 222 & 195 & 190 & 163 & 158 & 131 \\ \hline
          255 & 226 & 223 & 194 & 191 & 162 & 159 & 130 & 15 & 18 & 47 & 50 & 79 & 82 & 111 & 114  \\ \hline
          16 & 17 & 48 & 49 & 80 & 81 & 112 & 113 & 256 & 225 & 224 & 193 & 192 & 161 &  160 & 129 \\ \hline
      \end{array} \end{array} 
      \]
    \end{adjustwidth}
  }
\end{table}

\begin{table} {\scriptsize
    \caption{ Franklin squares.}
    \label{tabefranklinsquaresf1f3} 
    \begin{adjustwidth}{-2cm}{-1cm} 
      \[ \begin{array}{ccccccccccc}

        \begin{array}{ccccccc}
          F1 & F2 \\
          \begin{array}{|c|c|c|c|c|c|c|c|} \hline
            52 & 61 & 4 & 13 & 20 & 29 & 36 & 45 \\ \hline
            14 & 3 & 62 & 51 & 46 & 35 & 30 & 19 \\ \hline
            53 & 60 & 5 & 12 & 21 & 28 & 37 & 44 \\ \hline
            11 & 6 & 59 & 54 & 43 & 38 & 27 & 22 \\ \hline
            55 & 58 & 7 & 10 & 23 & 26 & 39 & 42 \\ \hline
            9 & 8 & 57 & 56 & 41 & 40 & 25 & 24 \\ \hline
            50 & 63&  2 & 15 & 18 & 31 & 34 & 47 \\ \hline
            16 & 1 & 64 & 49 & 48 & 33 & 32 & 17 \\ \hline
          \end{array}
          &
          \begin{array}{|c|c|c|c|c|c|c|c|} \hline
            17 & 47 & 30 & 36 & 21 & 43 & 26 & 40 \\ \hline
            32 & 34 & 19 & 45 & 28 & 38 & 23 & 41 \\ \hline
            33 & 31 & 46 & 20 & 37 & 27 & 42 & 24 \\ \hline
            48 & 18 & 35 & 29 & 44 & 22 & 39 & 25 \\ \hline
            49 & 15 & 62 & 4 & 53 & 11 & 58 & 8 \\ \hline
            64 & 2 & 51 & 13 & 60 & 6 & 55 & 9 \\ \hline
            1 & 63 & 14 & 52 & 5 & 59 & 10 & 56 \\ \hline
            16 & 50 & 3 & 61 & 12 & 54 & 7 & 57 \\ \hline
          \end{array}
        \end{array} \\ \\
        \begin{array}{ccccccc}
          F3 \\
          \begin{array}{|c|c|c|c|c|c|c|c|c|c|c|c|c|c|c|c|} \hline
            200 & 217 & 232 & 249 & 8 & 25 & 40 & 57 & 72 & 89 & 104 & 121 & 136 & 153 & 168 & 185 \\ \hline
            58 & 39 & 26 & 7 & 250 & 231 & 218 & 199 & 186 & 167 & 154 & 135 & 122 & 103 & 90 & 71 \\ \hline
            198 & 219 & 230 & 251 & 6 & 27 & 38 & 59 & 70 & 91 & 102 & 123 & 134 & 155 & 166 & 187 \\ \hline
            60 & 37 & 28 &  5 & 252& 229 & 220 & 197 & 188 & 165 & 156 & 133 & 124 & 101 & 92 & 69  \\ \hline
            201 & 216 & 233 & 248 & 9  & 24  & 41  & 56 & 73 & 88 & 105 & 120 & 137 & 152 & 169 & 184  \\ \hline
            55 & 42 & 23 & 10 & 247 & 234 & 215 & 202 & 183 & 170 & 151 & 138 & 119 & 106 &  87 & 74  \\ \hline
            203 & 214 & 235 & 246 & 11 & 22 & 43 & 54 & 75 & 86 & 107 & 118 & 139 & 150 & 171 & 182  \\ \hline
            53 & 44 & 21 & 12 & 245 & 236 & 213 & 204 & 181 & 172 & 149 & 140 & 117 & 108 & 85 & 76  \\ \hline

            205 & 212 & 237 & 244 &  13 & 20 & 45 &  52 &  77 &  84 & 109 & 116 & 141 & 148 & 173 & 180 \\ \hline 
            51 & 46 & 19 &  14 &  243 & 238 & 211 & 206 & 179 & 174 & 147 & 142 & 115 & 110 & 83 &  78  \\ \hline
            207 & 210 & 239 & 242 & 15 & 18 &  47 &  50 &  79 &  82 &  111 & 114 & 143 & 146 & 175 & 178  \\ \hline
            49 &  48 & 17 & 16 &  241 & 240 & 209 & 208 & 177 & 176 & 145 & 144 & 113 & 112 & 81 &  80  \\ \hline
            196 &221 & 228 & 253 & 4 & 29 &  36 & 61 & 68 & 93 & 100 & 125 & 132 & 157 & 164 & 189  \\ \hline
            62  & 35 &  30 &  3 & 254 & 227 & 222 & 195 & 190 & 163 & 158 & 131 & 126 & 99 & 94 & 67  \\ \hline
            194 & 223 & 226 & 255 & 2 & 31 &  34 &  63 &  66 &  95 &  98 &  127 & 130 & 159 & 162 & 191  \\ \hline
            64  & 33  & 32 & 1 & 256 & 225 & 224 &  193 & 192 & 161 & 160 & 129 & 128 & 97 & 96 &  65  \\ \hline
          \end{array}
        \end{array}
      \end{array}
      \]
    \end{adjustwidth}
  }
\end{table}

\begin{figure}[h]
  \begin{center}
    \includegraphics[scale=0.3]{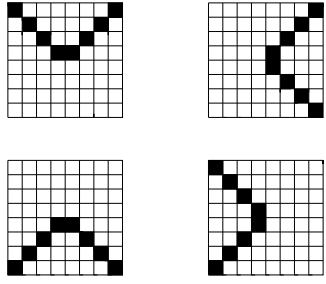}
    \caption{The four main bend diagonals \cite{pasles}} \label{bent_diags}
  \end{center}
\end{figure}
%%%%%%% parallel bend diagonals %%%%%%%%%%%%%%%%%
\begin{figure}[h]
  \begin{center}
    \includegraphics[scale=0.4]{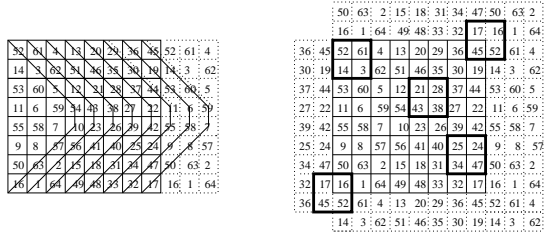}
    \caption{Continuous properties of bend diagonals and  $2 \times 2$ sub-squares.} \label{parallelbends}
  \end{center}
\end{figure}

\begin{figure}[h]
  \begin{center}
    \includegraphics[scale=0.5]{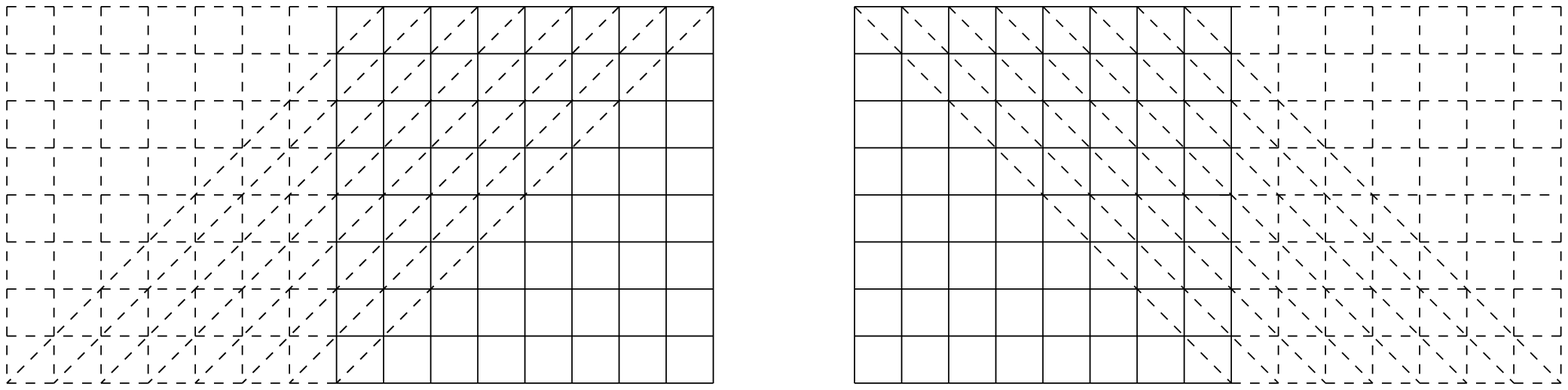}
    \caption{Right and left Pandiagonals.} \label{pandiagonas}
  \end{center}
\end{figure}

By a {\em continuous property}, we mean that if we imagine the square
as the surface of a torus (i.e., if we glue opposite sides of the
square together), then the property can be translated without effect on
the corresponding sums.  See Figure \ref{parallelbends} for
examples. From now on, row sum, column sum, or bend diagonal sum,
etc.  mean that we are adding the entries of those elements.  {\em
  Franklin squares} were defined in \cite{ahmed1} as follows.

\begin{defn}[Franklin Square]Consider
  an integer, $n=2^r$ such that $r \geq 3$. Let the magic sum
  be  denoted by  $M$ and  $N=n^2+1$.  We define  an {\em  $n
    \times n$  Franklin square} to  be a $n \times  n$ matrix
  with the following properties:
  \begin{enumerate}
  \item  Every  integer from  the  set  $\{1,2, \dots,  n^2\}$
    occurs  exactly once  in the  square. Consequently,  \[M =
    \frac{n}{2} N.\]
  \item All  the the half  rows, half columns add  to one-half
    the magic sum. Consequently,  all the rows and columns add
    to the magic sum.
  \item All the bend diagonals add to the magic sum, continuously (see Figures \ref{bent_diags} and \ref{parallelbends}).
  \item All  the $2  \times 2$ sub-squares  add to  $2N$,
    continuously.  
  \end{enumerate}
\end{defn}

Observe that Franklin squares are not traditional magic squares
because the main diagonals do not add to the magic sum.  In this
article, we restrict our discussion to Narayana squares of order
$2^r$, where $r \geq 3$. This is for comparison with Franklin
squares. For more general Narayana squares, see \cite{datta} and
\cite{narayana}.

\begin{defn}[Narayana Square]Consider
  an integer, $n=2^r$ such that $r \geq 3$. Let the magic sum
  be  denoted by  $M$ and  $N=n^2+1$.  We define  an {\em  $n
    \times n$  Narayana square} to  be a $n \times  n$ matrix
  with the following properties:
  \begin{enumerate}
  \item  Every  integer from  the  set  $\{1,2, \dots,  n^2\}$
    occurs  exactly once  in the  square. Consequently,  \[M =
    \frac{n}{2} N.\]
  \item All the rows and columns add to the magic sum.
  \item All the pandiagonals add to the magic sum  (see Figure \ref{pandiagonas}).
  \item All  the $2  \times 2$ sub-squares  add to  $2N$,
    continuously.  
  \end{enumerate}
\end{defn}

Thus, Narayana squares are magic squares with additional properties.
Though the squares were constructed by two different people, across
different centuries and continents, the similarities in properties and
construction are remarkable, and we address some of the properties in the
next proposition.

\begin{prop} \label{similarpropprop}
  Consider an integer, $n=2^r$ such that $r \geq 3$. Let the magic sum
  be denoted by $M$, $m=n^2/2+1$, and $N=n^2+1$. The similarities and
  differences in the defining properties of Franklin and Narayana
  squares are given below. 
  
  \begin{tabular}{|c|p{0.4\linewidth}|p{0.4\linewidth}|} \hline
    \multicolumn{2}{|c|}{Franklin Square} & \multicolumn{1}{|c|}{Narayana Square} \\ \hline
    \rownumber &$M = \frac{n}{2}{N}$ & $M = \frac{n}{2}{N}$ \\  \hline

    \rownumber &  All the rows and columns add to $M$ &  All the rows and columns add to $M$. \\ \hline

    \rownumber &  All  the $2  \times 2$ sub-squares  add to  $2N$,

    continuously. &  All  the $2  \times 2$ sub-squares  add to  $2N$,
    continuously. \\ \hline

    \rownumber & All  the the half row  sums add to $M/2$. & 
    Half row sums add either to $(n/4)m$ or $M-(n/4)m$.  \\ \hline

    \rownumber & Half column sums add to $M/2$. & Half column sums add to  $M/2 \pm n^2/8$. \\ \hline

    \rownumber & All Bend diagonal sums add to $M$. & Left and right
    bend diagonal sums add to $ M \pm n/2$. Top and bottom bend
    diagonal sums add to $ M \pm n^2/2$.  \\ \hline

    \rownumber & Pandiagonal sums add  to $M \pm n^2/2$. & All pandiagonals add to $M$.  \\ \hline
  \end{tabular}

\end{prop}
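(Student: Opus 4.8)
\section*{Proof proposal}

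The plan is to work through the table row by row, separating the entries that are immediate from the two definitions from the four genuinely new ``cross'' entries that must be derived. Write $a_{i,j}$ for the entry in row $i$, column $j$, all indices read modulo $n$ whenever a continuous statement is involved. Row~1 follows at once by summing $1,\dots,n^2$ to get $\tfrac{n^2(n^2+1)}{2}$ and dividing by the $n$ equal rows, which gives $M=\tfrac{n}{2}N$. Rows~2 and~3 are verbatim defining properties of both squares. On the Franklin side the entries in Rows~4, 5, and 6 are exactly the defining properties (half rows and half columns equal $M/2$, bend diagonals equal $M$), and on the Narayana side the entry in Row~7 is the defining pandiagonal property. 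Thus the real work reduces to four computations: the Franklin pandiagonal sum, and the Narayana half-row, half-column, and bend-diagonal sums.

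The engine for all four is the continuous $2\times 2$ identity $a_{i,j}+a_{i,j+1}+a_{i+1,j}+a_{i+1,j+1}=2N$. Subtracting the copies at $(i,j)$ and $(i,j+1)$ gives $a_{i,j}+a_{i+1,j}=a_{i,j+2}+a_{i+1,j+2}$, so a vertical pair-sum depends only on the parity of its column; the transposed manipulation shows a horizontal pair-sum depends only on the parity of its row, and in each case the two parity classes add to $2N$. Using these relations I would rewrite every target sum as a small combination of ``anchor'' pair-sums. For instance a half column is a sum of $n/4$ vertical pair-sums of a fixed column parity, whereas the full column sum $M$ is the sum of all $n/2$ such pair-sums; hence the half-column sum equals $M/2$ plus the imbalance between the top block and the bottom block of pair-sums, and the whole task becomes computing that imbalance.

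To evaluate the imbalances I would feed in the remaining defining property of the square in question: the pandiagonal condition for the Narayana entries and the bend-diagonal condition for the Franklin entry. A pandiagonal and a bend diagonal through the same apex differ only in the direction their lower half takes, so their two sums differ by a telescoping string of $2\times 2$ corrections; since one of the two sums is pinned to $M$ by a defining property, the other is forced to $M$ plus a correction whose size is dictated by how many unit-scale and $n$-scale corrections accumulate. This is what should produce the shifts $\pm n/2$ and $\pm n^2/2$ for the two families of Narayana bend diagonals and the shift $\pm n^2/2$ for the Franklin pandiagonals. The value $(n/4)m$ for the Narayana half rows I would instead get from the sharper observation, already visible in N1 and N2, that the $n/2$ entries of a half row split into $n/4$ complementary pairs each summing to $m=n^2/2+1$; reading the same pairing down the columns then yields the half-column shift $\pm n^2/8$.

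The main obstacle is not setting up these identities but pinning down the signs: the $2\times 2$ and magic-line properties are invariant under interchanging the two parity classes, so they determine each correction only up to $\pm$, and it is precisely this parity that distinguishes, say, a top-left half column from a bottom-right one. Resolving it forces one to commit to the exact cell labels in Figures~\ref{bent_diags} and~\ref{pandiagonas} and to track the parity of the apex through the telescoping argument, which is where the bulk of the bookkeeping lives. I would fix the signs once and for all by evaluating each expression on the worked squares N1, N2, F1, F2, and F3, and, for a fully uniform argument, by appealing to the explicit superimposition construction foreshadowed in the abstract, in which the complementary pairings summing to $m$ and to $2N$ are manifest and every correction term can be read off in closed form.
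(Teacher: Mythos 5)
Your sorting of the table into definitional entries versus the four derived ones (Franklin pandiagonals; Narayana half rows, half columns, bend diagonals) matches the paper, Row~1 is fine, and the parity facts you extract from the continuous $2\times 2$ identity are correct. But the engine you propose cannot prove the four derived entries, and the obstruction is structural rather than bookkeeping. Everything you allow yourself to use --- the $2\times 2$ identity, row, column, half-row, half-column, bend-diagonal and pandiagonal sums --- is a \emph{linear} condition on the entries, and telescoping such conditions against each other produces only linear consequences. These cannot suffice: the constant matrix with every entry equal to $N/2$ satisfies \emph{all} of the linear axioms of both definitions (every row and column sums to $M$, every $2\times 2$ block to $2N$, every half row and half column to $M/2$, every bend diagonal and pandiagonal to $M$), yet its pandiagonal sums equal $M$, not $M\pm n^2/2$, and its half-row sums equal $M/2$, not $(n/4)m$ or $M-(n/4)m$. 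So any argument built solely from the linear axioms would "prove" the table for this matrix, where the table is false. Concretely, writing a continuous-$2\times2$ square as $a_{i,j}=N/2+(-1)^{i+j}c_0+(-1)^j f_i+(-1)^i g_j$, your "imbalances" reduce to quantities such as $\sum_{j\le n/2}g_j$ (Narayana half rows) and $nc_0+\sum_i(-1)^i f_i+\sum_j(-1)^j g_j$ (Franklin pandiagonals), and the linear axioms leave these completely free --- in magnitude, not just in sign. Thus the difficulty you flag as "pinning down the signs" is really that neither sign nor size of any correction term is determined by your premises; only the permutation axiom (entries are exactly $1,\dots,n^2$), which your sketch never uses after Row~1, could conceivably pin them down, and you give no mechanism for that.

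Your two fallbacks do not close this gap. Evaluating on N1, N2, F1, F2, F3 verifies five squares, not the proposition; and your observation that half rows "split into $n/4$ complementary pairs each summing to $m$" is not a consequence of the axioms either (it too fails for the constant matrix) --- it is precisely Part~\ref{halfrowitemnara} of Lemma~\ref{lemmanarayproof}, which the paper can assert only because it holds \emph{by construction} for squares built by Algorithm~\ref{narayanacosntructalgo1}. Likewise, "appealing to the explicit superimposition construction" is not a patch on your argument but a replacement of it: the paper's actual proof of the proposition is construction-based throughout. It proves Lemma~\ref{prooffranklinalgolemma} and Lemma~\ref{lemmanarayproof} for algorithmically built squares, deduces the defining properties (Propositions in Sections~\ref{franklinsection} and~\ref{newmethodsection}), the Franklin pandiagonal values (Lemma~\ref{franklinpandiaohlemma}, via the Chadya--Chadaka decomposition), and the Narayana half-row, half-column and bend-diagonal values (Corollary~\ref{halfrowsumcor} and the following lemma), and then handles F2 --- which the construction does not produce --- by direct observation. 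To repair your proposal you must either restrict the claim to constructed squares and prove the pairing identities from the construction (at which point you have reproduced the paper's proof), or supply a genuinely new argument that exploits the permutation condition in an essential way.
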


Proof of Proposition \ref{similarpropprop} is covered in Section
\ref{franklinsection} and Section \ref{newmethodsection}. Many other
properties of these squares are also revealed in these sections.

We describe Narayana's method of constructing magic squares.  For
constructing an $n \times n$ Narayana square, we start with two
sequences, namely,
\[
\mbox{{\em Mulapankti sequence:} }  1,2, \dots, n 
\] and \[  \mbox{\em{Ganapankti sequence:} } 0, n, 2n, 3n,  \dots, (n-1)n. \]

We construct arrays consisting of two rows, from these sequences, as
shown below (also see Examples \ref{8x8narayannamaste} and
\ref{16x16narayannamaste}).
\[ \begin{array}{cccccccccccccc}
  \mbox{{\em Mulapankti array}}  \\ \\
  \begin{array}{ccccccccccccccccccccc}
    1&  2 &  \dots  & \frac{n}{2}-1 & \frac{n}{2} \\ \\
    n & n-1 & \dots &   \frac{n}{2}+2 & \frac{n}{2}+1 
  \end{array}
\end{array} 
\]
\[ \begin{array}{cccccccccccccc}
  \mbox{{\em Ganapankti array}} \\ \\ 
  \begin{array}{cccccccccccccccccc}
    0 & n & 2n & \dots & n(\frac{n}{2}-2) & n(\frac{n}{2}-1) \\ \\
    n(n-1) & n(n-2) &n(n-3) &  \dots & n(\frac{n}{2}+1)& n\frac{n}{2}
  \end{array} 
\end{array} 
\]

Two squares, a covering one called {\em Chadaka}, and a square to be
covered called {\em Chadya} are formed as follows.  We start with the
last column of the Mulapankti array, working backwards. Each column of
the Mulapankti array is written horizontally, repeatedly, to form the
first $n/2$ rows of the Chadya. The two entries in each column are
inverted, and again written horizontally, to form the last $n/2$ rows
of the Chadya.  The Chadaka is constructed using the Ganapankti
array. This construction is similar to Chadya, except that, the
columns of the Ganapankti array are written vertically, repeatedly, to
form the first $n/2$ columns of the Chadaka. Again, the entries of the
Ganapankti columns are reversed to construct the last $n/2$
columns. The Chadya and the Chadaka are then superimposed like the
folding of palms in a Namaste to form the Narayana magic square. In
other words, the Chadaka is flipped about a vertical edge and added to
the Chadya to get the magic square. The square we get by flipping the
Chadaka about a vertical edge is called {\em flipped Chadaka} from now
on-wards. Example \ref{8x8narayannamaste} and Example
\ref{16x16narayannamaste} demonstrates the Narayana construction for
the $8 \times 8$ and $16 \times 16$ Narayana square, respectively.

\begin{exm} \label{8x8narayannamaste}
  {\em Narayana's method of constructing $8 \times 8$ magic squares. 

    We demonstrate how to construct the Narayana square N1 in Table
    \ref{tabenarayanasquares8n16}.
    \[\begin{array}{lllllllllll} \mbox{Mulapankti sequence:} & 1, 2, 3, 4, 5, 6, 7, 8. \end{array}\]
    \[\begin{array}{lllllllllll} \mbox{Ganapankti sequence:} & 0, 8, 16, 24, 32, 40, 48, 56. \end{array}\]
    \[ \begin{array}{cccccccccccccc}
      \mbox{Mulapankti array}  & &  \mbox{Ganapankti array} \\ 
      \begin{array}{ccccccccccccccccccccc}
        1 & 2 & 3 & 4 \\
        8 & 7 & 6 & 5  
      \end{array}
      &&
      \begin{array}{cccccccccccccccccc}
        0 & 8 & 16 & 24 \\
        56 & 48 & 40 & 32
      \end{array} 
    \end{array} 
    \]

    Each column of the Mulapankti array is written horizontally, repeatedly, to form the first $4$ rows of the Chadya. 
    {\scriptsize
      \[
      \begin{array}{|c|c|c|c|c|c|c|c|} \hline
        4 & 5 & 4 & 5 & 4 & 5 & 4 & 5 \\ \hline
        3&6&3&6&3&6&3&6 \\ \hline
        2&7&2&7&2&7&2&7 \\ \hline
        1 & 8 &1 & 8 &1 & 8 &1 & 8  \\ \hline
        &&&&&&& \\ \hline
        &&&&&&& \\ \hline
        &&&&&&& \\ \hline
        &&&&&&& \\ \hline
      \end{array} 
      \]
    } The entries of the Mulapankti array columns are flipped, and
    again written horizontally, repeatedly, to form the last four rows
    of the Chadya.  

    {\scriptsize
      \[ \begin{array}{c}
        \mbox{Chadya of N1} \\ 
        \begin{array}{|c|c|c|c|c|c|c|c|} \hline
          4 & 5 & 4 & 5 & 4 & 5 & 4 & 5 \\ \hline
          3&6&3&6&3&6&3&6 \\ \hline
          2&7&2&7&2&7&2&7 \\ \hline
          1 & 8 &1 & 8 &1 & 8 &1 & 8  \\ \hline
          5 & 4 & 5 & 4 &5 & 4 &5 & 4 \\ \hline 
          6&3&6&3&6&3&6 & 3 \\ \hline
          7&2&7&2&7&2&7 & 2 \\ \hline
          8 &1 & 8 &1 & 8 &1 & 8  & 1\\ \hline
        \end{array} 
      \end{array}
      \]
    }
    The columns of the Ganapankti array are written vertically,
    repeatedly, to form the first four columns of the Chadaka.
    {\scriptsize
      \[
      \begin{array}{|c|c|c|c|c|c|c|c|} \hline
        4 & 5 & 4 & 5 & & & & \\ \hline
        3&6&3&6&  & & &   \\ \hline
        2&7&2&7&  & & &   \\ \hline
        1 & 8 &1 & 8 &  & & &  \\ \hline
        5 & 4 & 5 & 4 &  & & &   \\ \hline 
        6&3&6&3&  & & &   \\ \hline
        7&2&7&2&  & & &  \\ \hline
        8 &1 & 8 &1 &  & & &   \\ \hline
      \end{array} 
      \]
    }
    The
    entries of the Ganapankti array columns are flipped to construct
    the last four columns.

    {\scriptsize
      \[ \begin{array}{ccccc}
        \mbox{Chadaka of N1} \\
        \begin{array}{|c|c|c|c|c|c|c|c|} \hline
          4 & 5 & 4 & 5 & 4 & 5 & 4 & 5 \\ \hline
          3&6&3&6&3&6&3&6 \\ \hline
          2&7&2&7&2&7&2&7 \\ \hline
          1 & 8 &1 & 8 &1 & 8 &1 & 8  \\ \hline
          5 & 4 & 5 & 4 &5 & 4 &5 & 4 \\ \hline 
          6&3&6&3&6&3&6 & 3 \\ \hline
          7&2&7&2&7&2&7 & 2 \\ \hline
          8 &1 & 8 &1 & 8 &1 & 8  & 1\\ \hline
        \end{array} 
        
      \end{array}
      \]
    }

    Finally, the Chadya and Chadaka are superimposed, like the folding
    of palms in a Namaste to form the Narayana magic square N1. That
    is, the Chadaka is flipped along a vertical edge and added to the
    Chadya to get N1.  

    {\scriptsize
      \[ \begin{array}{ccccccccccccccc}
        \mbox{Chadya} & &  \mbox{Flipped Chadaka} & & \mbox{Narayana square N1}\\ 
        \begin{array}{|c|c|c|c|c|c|c|c|} \hline
          4 & 5 & 4 & 5 & 4 & 5 & 4 & 5 \\ \hline
          3&6&3&6&3&6&3&6 \\ \hline
          2&7&2&7&2&7&2&7 \\ \hline
          1 & 8 &1 & 8 &1 & 8 &1 & 8  \\ \hline
          5 & 4 & 5 & 4 &5 & 4 &5 & 4 \\ \hline 
          6&3&6&3&6&3&6 & 3 \\ \hline
          7&2&7&2&7&2&7 & 2 \\ \hline
          8 &1 & 8 &1 & 8 &1 & 8  & 1\\ \hline
        \end{array} & + & 
        \begin{array}{|c|c|c|c|c|c|c|c|} \hline

          56 & 48 & 40 & 32 & 0 & 8 & 16 & 24 \\ \hline
          0 & 8 & 16 & 24 & 56 & 48 & 40 & 32 \\ \hline
          56 & 48 & 40 & 32 & 0 & 8 & 16 & 24 \\ \hline
          0 & 8 & 16 & 24 & 56 & 48 & 40 & 32 \\ \hline
          56 & 48 & 40 & 32 & 0 & 8 & 16 & 24 \\ \hline
          0 & 8 & 16 & 24 & 56 & 48 & 40 & 32 \\ \hline
          56 & 48 & 40 & 32 & 0 & 8 & 16 & 24 \\ \hline
          0 & 8 & 16 & 24 & 56 & 48 & 40 & 32 \\ \hline
        \end{array} &=& 

        \begin{array}{|c|c|c|c|c|c|c|c|} \hline
          60 & 53 & 44 & 37 & 4 & 13 & 20 & 29 \\ \hline
          3 & 14 & 19 & 30 & 59 & 54 & 43 & 38 \\ \hline
          58 & 55 & 42 & 39 & 2 & 5 & 18 & 31 \\ \hline
          1 & 16 & 17 & 32 & 57 & 56 & 41 & 40 \\ \hline
          61 & 52 & 45 & 36 & 5 & 12 & 21 & 28 \\ \hline
          6 & 11 & 22 & 27 & 62 & 51 & 46 & 35 \\ \hline
          63 & 50 & 47 & 34 & 7 & 10 & 23 & 26 \\ \hline
          8 & 9 & 24 & 25 & 64 & 49 & 48 & 33 \\ \hline
        \end{array}

      \end{array}
      \]
    }
} \end{exm}
\begin{exm} \label{16x16narayannamaste}
  {\em Narayana's method of constructing $16 \times 16$ magic squares.

    In this example, we construct the $16 \times 16$ Narayana square N2 in
    Table \ref{tabenarayanasquares8n16}.
    
    \[ \begin{array}{cccccccccccc}
      \mbox{Mulapankti sequence:}  &  1, 2, 3, 4, 5, 6, 7, 8, 9, 10 , 11, 12, 13, 14, 15, 16. 
    \end{array} 
    \]
    \[ \begin{array}{cccccccccccc}
      \mbox{Ganapankti sequence:} & 16, 32, 48, 64, 80, 96, 112, 128, 144, 160, 176, 192, 208, 224, 240. \\ \\
    \end{array} 
    \]
    \[ 
    \begin{array}{ccc}
      \mbox{Mulapankti array} && \mbox{Ganapankti array}\\

      \begin{array}{ccccccccccccccccccccccccccccc}
        1 & 2 & 3 & 4 & 5 & 6 & 7 & 8  \\
        16 & 15& 14 & 13 & 12 & 11 & 10 & 9  
      \end{array} 
      & & 
      \begin{array}{ccccccccccccccccccccccccccccc}
        0 & 16 & 32 & 48 & 64 & 80 & 96 & 112 \\
        240 & 224 & 208 & 192 & 176 & 160 & 144 & 128 \\

      \end{array} 
    \end{array}
    \]
    The Chadya and the  Chadaka of N2 is given below. The Chadya and the flipped Chadaka are added together to get the $16
    \times 16$ Narayana square N2.

    \begin{adjustwidth}{-2cm}{-1cm}
      {\scriptsize
        \[   \begin{array}{cccccccccccc}
          \mbox{Chadya of N2} & \mbox{Chadaka of N2} \\
          \begin{array}{|c|c|c|c|c|c|c|c|c|c|c|c|c|c|c|c|} \hline
            8 &  9 & 8 & 9 & 8 & 9 & 8 & 9 & 8 & 9 & 8 & 9 & 8 & 9 & 8 & 9\\  \hline
            7 & 10 & 7 & 10 &7 & 10 &7 & 10 &7 & 10 &7 & 10 &7 & 10 & 7 & 10 \\ \hline
            6 & 11 &6 & 11 &6 & 11 &6 & 11 &6 & 11 &6 & 11 &6 & 11 &6 & 11  \\ \hline
            5 & 12 &5 & 12 &5 & 12 &5 & 12 &5 & 12 &5 & 12 &5 & 12 &5 & 12  \\ \hline
            4 & 13 &4 & 13 &4 & 13 &4 & 13 &4 & 13 &4 & 13 &4 & 13 &4 & 13  \\ \hline
            3 & 14 &3 & 14 &3 & 14 &3 & 14 &3 & 14 &3 & 14 &3 & 14 &3 & 14  \\ \hline
            2 & 15 &2 & 15 & 2 & 15 &2 & 15 & 2 & 15 &2 & 15 & 2 & 15 &2 & 15 \\ \hline
            1 & 16 &1 & 16 &1 & 16 &1 & 16 &1 & 16 &1 & 16 &1 & 16 &1 & 16  \\ \hline
            9 &  8 & 9 &  8 & 9 &  8 & 9 &  8 & 9 &  8 & 9 &  8 &9 &  8 & 9 & 8 \\ \hline
            10 & 7 &10 & 7 &10 & 7 &10 & 7 &10 & 7 &10 & 7 & 10 & 7 & 10 & 7  \\ \hline
            11 & 6 & 11 & 6 &11 & 6 & 11 & 6 &11 & 6 & 11 & 6 &11 & 6 & 11 & 6   \\ \hline
            12& 5 &12& 5 &12& 5 &12& 5 &12& 5 &12& 5 &12& 5 &12& 5  \\ \hline
            13& 4 &13& 4 &13& 4 &13& 4 &13& 4 & 13& 4 &13& 4 &13& 4  \\ \hline
            14& 3  &14& 3  &14& 3  &14& 3  &14& 3  &14& 3  &14& 3  &14& 3   \\ \hline
            15& 2 &15& 2 &15& 2 &15& 2 &15& 2 &15& 2 & 15& 2 &15& 2 \\ \hline
            16 & 1 & 16 & 1 &16 & 1 & 16 & 1 &16 & 1 & 16 & 1 &16 & 1 & 16 & 1  \\ \hline
          \end{array} 
          &
          \begin{array}{|c|c|c|c|c|c|c|c|c|c|c|c|c|c|c|c|} \hline
            112  & 96 & 80 & 64 & 48 & 32 & 16 & 0 & 128 & 144 & 160 & 176 & 192 & 208 & 224 & 240 \\ \hline
            128 & 144 & 160 & 176 & 192 & 208 & 224 & 240 & 112  & 96 & 80 & 64 & 48 & 32 & 16 & 0\\ \hline
            112  & 96 & 80 & 64 & 48 & 32 & 16 & 0 & 128 & 144 & 160 & 176 & 192 & 208 & 224 & 240 \\ \hline
            128 & 144 & 160 & 176 & 192 & 208 & 224 & 240 & 112  & 96 & 80 & 64 & 48 & 32 & 16 & 0\\ \hline
            112  & 96 & 80 & 64 & 48 & 32 & 16 & 0 & 128 & 144 & 160 & 176 & 192 & 208 & 224 & 240 \\ \hline
            128 & 144 & 160 & 176 & 192 & 208 & 224 & 240 & 112  & 96 & 80 & 64 & 48 & 32 & 16 & 0\\ \hline
            112  & 96 & 80 & 64 & 48 & 32 & 16 & 0 & 128 & 144 & 160 & 176 & 192 & 208 & 224 & 240 \\ \hline
            128 & 144 & 160 & 176 & 192 & 208 & 224 & 240 & 112  & 96 & 80 & 64 & 48 & 32 & 16 & 0\\ \hline
            112  & 96 & 80 & 64 & 48 & 32 & 16 & 0 & 128 & 144 & 160 & 176 & 192 & 208 & 224 & 240 \\ \hline
            128 & 144 & 160 & 176 & 192 & 208 & 224 & 240 & 112  & 96 & 80 & 64 & 48 & 32 & 16 & 0\\ \hline
            112  & 96 & 80 & 64 & 48 & 32 & 16 & 0 & 128 & 144 & 160 & 176 & 192 & 208 & 224 & 240 \\ \hline
            128 & 144 & 160 & 176 & 192 & 208 & 224 & 240 & 112  & 96 & 80 & 64 & 48 & 32 & 16 & 0\\ \hline
            112  & 96 & 80 & 64 & 48 & 32 & 16 & 0 & 128 & 144 & 160 & 176 & 192 & 208 & 224 & 240 \\ \hline
            128 & 144 & 160 & 176 & 192 & 208 & 224 & 240 & 112  & 96 & 80 & 64 & 48 & 32 & 16 & 0\\ \hline
            112  & 96 & 80 & 64 & 48 & 32 & 16 & 0 & 128 & 144 & 160 & 176 & 192 & 208 & 224 & 240 \\ \hline
            128 & 144 & 160 & 176 & 192 & 208 & 224 & 240 & 112  & 96 & 80 & 64 & 48 & 32 & 16 & 0\\ \hline
          \end{array}
        \end{array}
        \]
      }
    \end{adjustwidth}

} \end{exm}

Franklin never revealed his method of constructing his squares. In
\cite{mathesis}, a method to construct Franklin squares using Hilbert
basis was developed. Later in \cite{ma2}, a method to construct
Franklin squares of every order, in particular, F1 and F3, was
derived.

In Section \ref{franklinsection}, we describe and prove the method
developed in \cite{ma2}. Let $N=n^2+1$. The strategy is to first place
the numbers $i$, where $i=1,2, \dots, n^2/2$, and then place the
numbers $N-i$, such that all the defining properties of the square are
satisfied. In this article, we call this method the $N-i$ {\em
  method}.  Then we show how the $N-i$ method produces two squares
Chadya and Chadaka which can be superimposed, like folded palms in
a Namaste, to construct Franklin squares. In other words, the method
of constructing Franklin squares in \cite{ma2} is only a slight
modification of Narayana's construction in \cite{narayana}.  See
Tables \ref{franklin8chadya} and \ref{chadya16x16franklin} for
examples. The $N-i$ method cannot be directly used to create F2.  See
\cite{ahmedotherfranklin} for a method to construct Franklin square
F2.

In Section \ref{newmethodsection}, we develop an $N-i$ method to
construct Narayana squares of every order. We also use the $N-i$
method to create new Narayana squares. Moreover, we show that the
$N-i$ method to construct Narayana squares, is the same as the
original Chadya-Chadaka method given by Narayana Pandita.

\begin{table} {\scriptsize
    \caption{ Chadya and Chadaka of Franklin square F1.} 
    \label{franklin8chadya} 
    \[
    \begin{array}{ccccccccccccc}
      \begin{array}{|c|c|c|c|c|c|c|c|} \hline
        4 & 5 & 4 & 5 & 4 & 5 & 4 & 5 \\ \hline
        6& 3&6&3&6&3&6&3 \\ \hline
        5 & 4 & 5 & 4 & 5 & 4 & 5 & 4 \\ \hline
        3&6&3&6&3&6&3 & 6\\ \hline
        7& 2&7 &2&7&2&7&2 \\ \hline
        1 & 8 &1 & 8 &1 & 8 &1 & 8  \\ \hline
        2& 7& 2&7 &2&7&2&7 \\ \hline
        8 &1 & 8 &1 & 8 &1 & 8 & 1 \\ \hline
      \end{array} &&
      \begin{array}{|c|c|c|c|c|c|c|c|} \hline
        40 & 32 & 24 & 16 & 8 & 0 & 56 & 48 \\ \hline
        16 & 24 & 32 & 40 & 48 & 56 & 0 &  8 \\ \hline
        40 & 32 & 24 & 16 & 8 & 0 & 56 & 48 \\ \hline
        16 & 24 & 32 & 40 & 48 & 56 & 0 &  8 \\ \hline 
        40 & 32 & 24 & 16 & 8 & 0 & 56 & 48 \\ \hline
        16 & 24 & 32 & 40 & 48 & 56 & 0 &  8 \\ \hline
        40 & 32 & 24 & 16 & 8 & 0 & 56 & 48 \\ \hline
        16 & 24 & 32 & 40 & 48 & 56 & 0 &  8 \\ \hline    
      \end{array}
    \end{array}
    \]}
\end{table}

\begin{table}  {\scriptsize
    \begin{adjustwidth}{-2cm}{-1cm} 
      \caption{Chadya and Chadaka of $16 \times 16$ Franklin Square F3.} 
      \label{chadya16x16franklin} 
      \[ 
      \begin{array}{ccccc} 
        \begin{array}{|c|c|c|c|c|c|c|c|c|c|c|c|c|c|c|c|} \hline
          8 & 9 & 8 & 9 & 8 & 9 & 8 & 9 \\ \hline
          10 & 7 & 10& 7 & 10 & 7 & 10 & 7 \\ \hline
          6 & 11 & 6 & 11 &  6 & 11 & 6 & 11 \\ \hline
          12 & 5 & 12 & 5 & 12 & 5 & 12 & 5 \\ \hline
          9 & 8 & 9 & 8 & 9 & 8 & 9 & 8 \\ \hline
          7 & 10 & 7 & 10 & 7 & 10 & 7 & 10 \\ \hline
          11 & 6 & 11 & 6 & 11 & 6 & 11 & 6 \\ \hline
          5 & 12 & 5 & 12 &  5 & 12 & 5 & 12 \\ \hline
          13 & 4 & 13 & 4 & 13 & 4 & 13 & 4 \\ \hline
          3 & 14 & 3& 14 & 3 & 14 & 3& 14 \\ \hline
          15 & 2 & 15 & 2& 15 & 2 & 15 & 2 \\ \hline
          1 & 16 & 1 & 16 & 1 & 16 & 1 & 16 \\ \hline
          4 & 13 & 4 & 13& 4 & 13 & 4 & 13 \\ \hline
          14 & 3 & 14 & 3 &    14 & 3 & 14 & 3 \\ \hline
          2 & 15 & 2 & 15& 2 & 15 & 2 & 15 \\ \hline
          16 & 1 & 16 & 1 &  16 & 1 & 16 & 1 \\ \hline
        \end{array} 
        \begin{array}{|c|c|c|c|c|c|c|c|c|c|c|c|c|c|c|c|} \hline
          8 & 9 & 8 & 9 & 8 & 9 & 8 & 9 \\ \hline
          10 & 7 & 10& 7 & 10 & 7 & 10 & 7 \\ \hline
          6 & 11 & 6 & 11 &  6 & 11 & 6 & 11 \\ \hline
          12 & 5 & 12 & 5 & 12 & 5 & 12 & 5 \\ \hline
          9 & 8 & 9 & 8 & 9 & 8 & 9 & 8 \\ \hline
          7 & 10 & 7 & 10 & 7 & 10 & 7 & 10 \\ \hline
          11 & 6 & 11 & 6 & 11 & 6 & 11 & 6 \\ \hline
          5 & 12 & 5 & 12 &  5 & 12 & 5 & 12 \\ \hline
          13 & 4 & 13 & 4 & 13 & 4 & 13 & 4 \\ \hline
          3 & 14 & 3& 14 & 3 & 14 & 3& 14 \\ \hline
          15 & 2 & 15 & 2& 15 & 2 & 15 & 2 \\ \hline
          1 & 16 & 1 & 16 & 1 & 16 & 1 & 16 \\ \hline
          4 & 13 & 4 & 13& 4 & 13 & 4 & 13 \\ \hline
          14 & 3 & 14 & 3 &    14 & 3 & 14 & 3 \\ \hline
          2 & 15 & 2 & 15& 2 & 15 & 2 & 15 \\ \hline
          16 & 1 & 16 & 1 &  16 & 1 & 16 & 1 \\ \hline
        \end{array} 
        &
        \begin{array}{|c|c|c|c|c|c|c|c|c|c|c|c|c|c|c|c|c|c|c|c|c|c|c|c|c|c|c|c|c|c|c|c|} \hline
          176 & 160 & 144 & 128 &  112 & 96 & 80& 64 & 48 & 32 & 16& 0 &   240  & 224 & 208 & 192   \\ \hline
          64 & 80 & 96 & 112 &  128 & 144 & 160 & 176  &   192 & 208 & 224& 240 & 0 & 16 & 32 & 48  \\ \hline
          176 & 160 & 144 & 128 &  112 & 96 & 80& 64 & 48 & 32 & 16& 0 &   240  & 224 & 208 & 192   \\ \hline
          64 & 80 & 96 & 112 &  128 & 144 & 160 & 176  &   192 & 208 & 224& 240 & 0 & 16 & 32 & 48  \\ \hline
          176 & 160 & 144 & 128 &  112 & 96 & 80& 64 & 48 & 32 & 16& 0 &   240  & 224 & 208 & 192   \\ \hline
          64 & 80 & 96 & 112 &  128 & 144 & 160 & 176  &   192 & 208 & 224& 240 & 0 & 16 & 32 & 48  \\ \hline
          176 & 160 & 144 & 128 &  112 & 96 & 80& 64 & 48 & 32 & 16& 0 &   240  & 224 & 208 & 192   \\ \hline
          64 & 80 & 96 & 112 &  128 & 144 & 160 & 176  &   192 & 208 & 224& 240 & 0 & 16 & 32 & 48  \\ \hline
          176 & 160 & 144 & 128 &  112 & 96 & 80& 64 & 48 & 32 & 16& 0 &   240  & 224 & 208 & 192   \\ \hline
          64 & 80 & 96 & 112 &  128 & 144 & 160 & 176  &   192 & 208 & 224& 240 & 0 & 16 & 32 & 48  \\ \hline
          176 & 160 & 144 & 128 &  112 & 96 & 80& 64 & 48 & 32 & 16& 0 &   240  & 224 & 208 & 192   \\ \hline
          64 & 80 & 96 & 112 &  128 & 144 & 160 & 176  &   192 & 208 & 224& 240 & 0 & 16 & 32 & 48  \\ \hline
          176 & 160 & 144 & 128 &  112 & 96 & 80& 64 & 48 & 32 & 16& 0 &   240  & 224 & 208 & 192   \\ \hline
          64 & 80 & 96 & 112 &  128 & 144 & 160 & 176  &   192 & 208 & 224& 240 & 0 & 16 & 32 & 48  \\ \hline
          176 & 160 & 144 & 128 &  112 & 96 & 80& 64 & 48 & 32 & 16& 0 &   240  & 224 & 208 & 192   \\ \hline
          64 & 80 & 96 & 112 &  128 & 144 & 160 & 176  &   192 & 208 & 224& 240 & 0 & 16 & 32 & 48  \\ \hline
        \end{array} 
      \end{array}
      \]
    \end{adjustwidth}

} \end{table}

%%%%%%%%%%%%%%%%%%%%%%%%%%%%%%%%%%%%%%%%%%%%%%%%%%%%%%%%%%%%%%%%%%%%%%%%%%%%%%%%%%%%%%%%%%%%%%%%%%%%%%%%%%%%
%%%%%%%%%%%%%%%%%%%%%%%%%%%%%%%%%%%%%%%%%%%%%%%%%%%%%%%%%%%%%%%%%%%%%%%%%%%%%%%%%%%%%%%%%%%%%%%%%%%%%%%%%%%%

\section{Franklin squares} \label{franklinsection}

In this section, we describe the $N-i$ method to construct Franklin
squares. We also show that the $N-i$ method can also be rewritten as a
Chadya-Chadaka method.

Let $N=n^2+1$. The strategy is to first place the numbers $i$, where
$i=1,2, \dots, n^2/2$, and then place the numbers $N-i$, such that,
all the defining properties of the square are satisfied. We start by
dividing the Franklin square in to two sides: the {\em left side}
consisting of the first $n/2$ columns and the {\em right side}
consisting of the last $n/2$ columns. The construction of the right
and left sides are largely independent of each other.  Each side is
further divided in to three parts: the {\em Top part} consisting of
the first $n/4$ rows, the {\em Middle part} consisting of the middle
$n/2$ rows, and the {\em Bottom part} consisting of the last $n/4$
rows.

{\em Distance of a column} in a given side is defined as the number of
columns between the given column and the center of the side. For
example, consider the left side. Here, the distance of the $n/4$ th
and the $n/4+1$ th column of the square (the middle columns of the
side) is zero whereas the distance of the first and last column of the
left side is $n/4$.

Each side is build, partially, two equidistant
columns at a time. For the left side of the Franklin square we start
from the middle two columns of the side, and navigate outwards, two
columns, at a time.  For the right side, we start with the first and
last columns, and navigate inward towards the center.

Given a side and a pair of equidistant columns, we denote the column
on the left of the center as $C_l$ and the column on the right of the
center as $C_r$.  Consider a given part with $r$ rows and a {\em
  starting number} $A$. There are only two operations involved for
such a part.  An {\em Up }operation where consecutive numbers from $A$
to $A+r$ are filled, in consecutive rows, starting from the bottom row
of the part, upwards, alternating between the columns $C_l$ and
$C_r$.   The only other operation is the {\em
  Down operation} where consecutive numbers from $A$ to $A+r$ are
filled, in consecutive rows, alternating between the columns $C_l$ and
$C_r$, starting from the top row of the part in a downward direction.
The {\em starting columns} of the parts are different for the
two sides and is given below.

\[
\begin{array}{|ccccc|} \hline
  & \mbox{Starting column} &&& \\ \hline
  \mbox{Operation}  & \mbox{Part}  & \mbox{Left side}  & \mbox{Right side} &   \\ \hline
  
  &  \mbox{Bottom}  & C_l  & C_r &\\
  \mbox{Up} & \mbox{Middle} & C_l & C_r & \\
  & \mbox{Top} & C_l & C_r  &\\ \hline
  &  \mbox{Bottom} & C_r  & C_l &\\
  \mbox{Down} & \mbox{Middle } & C_r & C_l & \\
  & \mbox{Top} & C_r & C_l & \\  \hline
\end{array} 
\]

For a given pair of equidistant columns, the sequence of operations
depends on the parity of the distance, and is as described below.
\begin{adjustwidth}{-2cm}{-1cm} 
  \[ \begin{array}{ccc}
    \mbox{Even distance} & & \mbox{Odd distance}  \\
    \begin{array}{|c|ccc|} \hline
      \mbox{Part}  & \mbox{Bottom} & \mbox{Top} & \mbox{Middle}\\ \hline
      \mbox{Operation} &  \mbox{Up} & \mbox{Up} & \mbox{Down} \\ \hline
      \mbox{Starting Number} & A & A+n/4 & A+n/2 \\ \hline
    \end{array}  & & 

    \begin{array}{|c|ccc|} \hline
      \mbox{Part}  & \mbox{Middle} & \mbox{Top} &  \mbox{Bottom}  \\ \hline
      \mbox{Operation} &  \mbox{Up} & \mbox{Down} & \mbox{Down} \\ \hline
      \mbox{Starting Number} & A & A+n/2 & A+3n/4 \\ \hline
    \end{array}
  \end{array}
  \]
\end{adjustwidth}

This sequence is same for the both the sides and will place $n$
consecutive numbers in the chosen two columns.  But the starting
number for the entire sequence of operations, depends on the distance
and the side.  For the left side, if the distance is $d$ then the
starting number is $nd+1$. Whereas for the right side, for columns at
a distance of $n/4-d$, the starting number is $nd+1+n^2/4$.

Finally, we complete the square as follows. For each side, the empty
cells in a row are filled with $N-i$, where $i$ is the entry in the
same row in the equidistant column.

\begin{exm}
  In Table \ref{tableleftf3steps}, the steps of partially filling the
  left side of the the $16 \times 16$ Franklin square F3 are
  illustrated. We start with the middle pair of columns. The distance of
  this pair from the center is zero, hence the starting number is $nd+1
  = 16\times 0 + 1 = 1$.  The sequence of operation is
  \[
  \begin{array}{|c|ccc|} \hline
    \mbox{Part}  & \mbox{Bottom} & \mbox{Top} & \mbox{Middle}\\ \hline
    \mbox{Operation} &  \mbox{Up} & \mbox{Up} & \mbox{Down} \\ \hline
    \mbox{Starting Number} & 1 &  5 & 9 \\ \hline
  \end{array}
  \]

  That is, we enter the numbers from $1$ to $4$ in the bottom part,
  starting from $C_l$, using the Up operation.  Next we enter the
  numbers from $5$ to $8$ in the top part, starting from $C_l$, using
  the Up operation. Finally, we enter the numbers from $8$ to $16$ in
  the middle part starting from column $C_r$, using the Down operation.

  In Step 2 of Table \ref{tableleftf3steps}, we consider equidistant
  columns of distance $1$ from the center. The starting number is $nd+1
  = 16 \times 1 + 1 = 17$.  Since the distance is odd, the sequence of
  operation is
  \[
  \begin{array}{|c|ccc|} \hline
    \mbox{Part}  & \mbox{Middle} & \mbox{Top} &  \mbox{Bottom}  \\ \hline
    \mbox{Operation} &  \mbox{Up} & \mbox{Down} & \mbox{Down} \\ \hline
    \mbox{Starting Number} & 17 &  25 & 29 \\ \hline
  \end{array}
  \]
  Thus, the numbers $17$ to $32$ are placed in the two columns using the
  above sequence of operations. Steps 3 and 4 demonstrate the placement
  of numbers from $33$ to $64$ in the rest of the columns of the left
  side of F3. See Table \ref{tabldistributeright} for the placement of
  the numbers from $65$ to $256$ in the right side of the $16 \times 16$
  Franklin square. See Table \ref{franklinstep1to4}, for the placement
  of numbers from $1$ to $32$ for the $8 \times 8$ Franklin Square F1.
  
  The filling of the empty cells with $N-i$, where $i$ is the entry in
  the same row in the equidistant column, for the left side of the
  square F3 is given in Table \ref{tableleftf3fillingN}. The completion
  of the right side of F3 is shown in Table
  \ref{rightfranklintablefillingwithN}. The final step of filling empty
  cells for the square F1 is given in Table \ref{franklinstepfinal}.
\end{exm}

The numbers $n^2/4+1$ to $n^2/2$ are entered in the right side of
Franklin square, starting from the last and first column, working
inwards. Consequently, we derive the right side from the left side as
follows. We swap the first $n/4$ columns with the last $n/4$ columns
of the partially filled left side and add $n^2/4$ to each entry. For
example, we swap the first four columns with the last four columns and
add $64$ to each element of the square in Step $4$ of Table
\ref{tableleftf3steps}. This gives us the square in Step 4 of Table
\ref{tabldistributeright}.  See Table \ref{firststeprightfromleft} for
an illustration. The last step of filling the empty squares with
$N-i$, involve subtractions by $n^2/4$. See Table
\ref{n-isteprightfromleft} for the example of F3.  This means that, once the
left side is build, we swap the first half columns with last half
columns and then add and subtract $n^2/4$, appropriately, to get the
right side. For example, the first eight columns of the square F3 is
swapped with the last eight columns of the left side of F3 and $64$ is
added and subtracted as shown in Table \ref{table64fromleft}.

\begin{table}{\scriptsize
    \caption{Constructing the Left side of the Franklin square F3.} 
    \label{tableleftf3steps} 
    \[ \begin{array}{cccccccccccccccccc}
      \mbox{Step 1} & \mbox{Step 2}  \\  
      \begin{array}{|c|c|c|c|c|c|c|c|c|c|c|c|c|c|c|c|} \hline
        - & - & - & - & 8 & - & - & - \\ \hline
        - & - & - & 7 & - & - & - & - \\ \hline
        - & - & - & - & 6 & - & - & - \\ \hline
        - & - & - & 5 & - & - & - & - \\ \hline
        - & - & - & - & 9 & - & - & - \\ \hline
        - & - & - & 10 & - & - & - & - \\ \hline
        - & - & - & - & 11 & - & - & - \\ \hline
        - & - & - & 12 & - & - & - & - \\ \hline
        - & - & - & - & 13 & - & - & - \\ \hline
        - & - & - & 14 & - & - & - & - \\ \hline
        - & - & - & - & 15 & - & - & - \\ \hline
        - & - & - & 16 & - & - & - & - \\ \hline
        - & - & - & - & 4 & - & - & - \\ \hline
        - & - & - & 3 & - & - & - & - \\ \hline
        - & - & - & - & 2 & - & - & - \\ \hline
        - & - & - & 1 & - & - & - & - \\ \hline
      \end{array}   & 
      \begin{array}{|c|c|c|c|c|c|c|c|c|c|c|c|c|c|c|c|} \hline
        - & - & - & - & 8 & 25 & - & - \\ \hline
        - & - & 26 & 7 & - & - & - & - \\ \hline
        - & - & - & - & 6 & 27& - & - \\ \hline
        - & - & 28 & 5 & - & - & - & - \\ \hline
        - & - & - & - & 9 & 24 & - & - \\ \hline
        - & - & 23 & 10 & - & - & - & - \\ \hline
        - & - & - & - & 11 & 22 & - & - \\ \hline
        - & - & 21 & 12 & - & - & - & - \\ \hline
        - & - & - & - & 13 & 20 & - & - \\ \hline
        - & - & 19 & 14 & - & - & - & - \\ \hline
        - & - & - & - & 15 & 18 & - & - \\ \hline
        - & - & 17 & 16 & - & - & - & - \\ \hline
        - & - & - & - & 4 & 29 & - & - \\ \hline
        - & - & 30 & 3 & - & - & - & - \\ \hline
        - & - & - & - & 2 & 31 & - & - \\ \hline
        - & - & 32 & 1 & - & - & - & - \\ \hline
      \end{array}   \\ \\
      \mbox{Step 3} & \mbox{Step 4}  \\  
      \begin{array}{|c|c|c|c|c|c|c|c|c|c|c|c|c|c|c|c|} \hline
        - & - & - & - & 8 & 25 & 40 & - \\ \hline
        - & 39 & 26 & 7 & - & - & - & - \\ \hline
        - & - & - & - & 6 & 27& 38 & - \\ \hline
        - & 37 & 28 & 5 & - & - & - & - \\ \hline
        - & - & - & - & 9 & 24 & 41 & - \\ \hline
        - & 42 & 23 & 10 & - & - & - & - \\ \hline
        - & - & - & - & 11 & 22 & 43 & - \\ \hline
        - & 44 & 21 & 12 & - & - & - & - \\ \hline
        - & - & - & - & 13 & 20 & 45 & - \\ \hline
        - & 46 & 19 & 14 & - & - & - & - \\ \hline
        - & - & - & - & 15 & 18 & 47 & - \\ \hline
        - & 48 & 17 & 16 & - & - & - & - \\ \hline
        - & - & - & - & 4 & 29 & 36 & - \\ \hline
        - & 35& 30 & 3 & - & - & - & - \\ \hline
        - & - & - & - & 2 & 31 & 34 & - \\ \hline
        - & 33 & 32 & 1 & - & - & - & - \\ \hline
      \end{array} &  
      \begin{array}{|c|c|c|c|c|c|c|c|c|c|c|c|c|c|c|c|} \hline
        - & - & - & - & 8 & 25 & 40 & 57 \\ \hline
        58 & 39 & 26 & 7 & - & - & - & - \\ \hline
        - & - & - & - & 6 & 27& 38 & 59 \\ \hline
        60 & 37 & 28 & 5 & - & - & - & - \\ \hline
        - & - & - & - & 9 & 24 & 41 & 56 \\ \hline
        55 & 42 & 23 & 10 & - & - & - & - \\ \hline
        - & - & - & - & 11 & 22 & 43 & 54 \\ \hline
        53 & 44 & 21 & 12 & - & - & - & - \\ \hline
        - & - & - & - & 13 & 20 & 45 & 52 \\ \hline
        51 & 46 & 19 & 14 & - & - & - & - \\ \hline
        - & - & - & - & 15 & 18 & 47 & 50 \\ \hline
        49 & 48 & 17 & 16 & - & - & - & - \\ \hline
        - & - & - & - & 4 & 29 & 36 & 61 \\ \hline
        62 & 35& 30 & 3 & - & - & - & - \\ \hline
        - & - & - & - & 2 & 31 & 34 & 63 \\ \hline
        64 & 33 & 32 & 1 & - & - & - & - \\ \hline
      \end{array}   
    \end{array}
    \]
  }
\end{table}
\begin{table}{\scriptsize
    \caption{Constructing the Right side of the Franklin square F3.} 
    \label{tabldistributeright} 
    \[
    \begin{array}{cccccccccccccc}
      \mbox{Step 1} & \mbox{Step 2}  \\ 
      \begin{array}{|c|c|c|c|c|c|c|c|c|c|c|c|c|c|c|c|} \hline
        72 &  & - & -  & - & - & - & - \\ \hline 
        - & - & - & - & - & - & - & 71 \\ \hline 
        70 & - & - & - & - & - & - & - \\ \hline
        - & - & - & - & - & - & - & 69 \\ \hline
        73 & - & -& -& - & - & - & - \\ \hline
        - & - & - & - & - & - & -& 74 \\ \hline
        75 & - & -& - & - & - & - & - \\ \hline
        - & - & - & - & - & - & - & 76 \\ \hline
        77 & - & - & - & - & - & - &  \\ \hline
        - & - & - & - & - & - & - & 78 \\ \hline
        79 & - & - & - & - & - & - & - \\ \hline
        - & - & - & - & - & - & - & 80 \\ \hline
        68 & - & -& -  & - & - & - & - \\ \hline
        - & - & - & - & - &- & - & 67 \\ \hline
        66 & - & - & -  & - & - & - & - \\ \hline
        - & - & - & - & - & - & - & 65 \\ \hline
      \end{array}  &
      \begin{array}{|c|c|c|c|c|c|c|c|c|c|c|c|c|c|c|c|} \hline
        72 & 89 & - & -  & - & - & - & - \\ \hline 
        - & - & - & - & - & - & 90 & 71 \\ \hline 
        70 & 91 & - & - & - & - & - & - \\ \hline
        - & - & - & - & - & - & 92 & 69 \\ \hline
        73 & 88 & -& -& - & - & - & - \\ \hline
        - & - & - & - & - & - & 87& 74 \\ \hline
        75 & 86 & -& - & - & - & - & - \\ \hline
        - & - & - & - & - & - & 85 & 76 \\ \hline
        77 & 84 & - & - & - & - & - &  \\ \hline
        - & - & - & - & - & - & 83 & 78 \\ \hline
        79 & 82 & - & - & - & - & - & - \\ \hline
        - & - & - & - & - & - & 81 & 80 \\ \hline
        68 & 93 & -& -  & - & - & - & - \\ \hline
        - & - & - & - & - &- & 94 & 67 \\ \hline
        66 & 95 & - & -  & - & - & - & - \\ \hline
        - & - & - & - & - & - & 96 & 65 \\ \hline
      \end{array}  \\ \\
      \mbox{Step 3} & \mbox{Step 4}  \\ 
      \begin{array}{|c|c|c|c|c|c|c|c|c|c|c|c|c|c|c|c|} \hline
        72 & 89 & 104 & -  & - & - & - & - \\ \hline 
        - & - & - & - & - & 103 & 90 & 71 \\ \hline 
        70 & 91 & 102 & - & - & - & - & - \\ \hline
        - & - & - & - & - & 101 & 92 & 69 \\ \hline
        73 & 88 & 105& -& - & - & - & - \\ \hline
        - & - & - & - & - & 106 & 87& 74 \\ \hline
        75 & 86 & 107 & - & - & - & - & - \\ \hline
        - & - & - & - & - & 108 & 85 & 76 \\ \hline
        77 & 84 & 109 & - & - & - & - &  \\ \hline
        - & - & - & - & - & 110 & 83 & 78 \\ \hline
        79 & 82 & 111 & - & - & - & - & - \\ \hline
        - & - & - & - & - & 112 & 81 & 80 \\ \hline
        68 & 93 & 100& -  & - & - & - & - \\ \hline
        - & - & - & - & - & 99 & 94 & 67 \\ \hline
        66 & 95 & 98 & -  & - & - & - & - \\ \hline
        - & - & - & - & - & 97 & 96 & 65 \\ \hline
      \end{array} &
      \begin{array}{|c|c|c|c|c|c|c|c|c|c|c|c|c|c|c|c|} \hline
        72 & 89 & 104 & 121 & - & - & - & - \\ \hline 
        - & - & - & - & 122 & 103 & 90 & 71 \\ \hline 
        70 & 91 & 102 & 123 & - & - & - & - \\ \hline
        - & - & - & - & 124 & 101 & 92 & 69 \\ \hline
        
        73 & 88 & 105& 120& - & - & - & - \\ \hline
        - & - & - & - & 119 & 106 & 87& 74 \\ \hline
        75 & 86 & 107 & 118 & - & - & - & - \\ \hline
        - & - & - & - & 117 & 108 & 85 & 76 \\ \hline
        77 & 84 & 109 & 116 & - & - & - &  \\ \hline
        - & - & - & - & 115 & 110 & 83 & 78 \\ \hline
        79 & 82 & 111 & 114 & - & - & - & - \\ \hline
        - & - & - & - & 113 & 112 & 81 & 80 \\ \hline
        
        68 & 93 & 100&  125  & - & - & - & - \\ \hline
        - & - & - & - & 126 & 99 & 94 & 67 \\ \hline
        66 & 95 & 98 & 127  & - & - & - & - \\ \hline
        - & - & - & - & 128& 97 & 96 & 65 \\ \hline
      \end{array}
    \end{array}
    \]
  }
\end{table}

\begin{table}{\scriptsize
    \caption{The left side of the Franklin square F3.} 
    \label{tableleftf3fillingN} 
    \[
    \begin{array}{|c|c|c|c|c|c|c|c|c|c|c|c|c|c|c|c|} \hline
      N-57 & N-40 & N-25 & N-8 & 8 & 25     & 40 & 57 \\ \hline
      58   & 39   & 26   & 7   & N-7 & N-26 & N-39 & N-58 \\ \hline
      N-59 & N-38 & N-27 & N-6  & 6    & 27   & 38   & 59 \\ \hline
      60   & 37   & 28   & 5    & N-5  & N-28 & N-37 & N-60 \\ \hline
      N-56 & N-41 & N-24 & N-9  & 9    & 24   & 41   & 56 \\ \hline
      55   & 42   & 23   & 10   & N-10 & N-23 & N-42 & N-55 \\ \hline
      N-54 & N-43 & N-22 & N-11 & 11   & 22   & 43   & 54 \\ \hline
      53   & 44   & 21   & 12  & N-12  & N-21 & N-44 & N-53 \\ \hline 
      N-52 & N-45 & N-20 & N-13 & 13   & 20   & 45   & 52 \\ \hline
      51   & 46   & 19   & 14  & N-14  & N-19 & N-46 & N-51 \\ \hline 
      N-50 & N-47 & N-18 & N-15 & 15   & 18   & 47   & 50 \\ \hline
      49   & 48   & 17   & 16  & N-16  & N-17 & N-48 & N-49 \\ \hline 
      N-61 & N-36 & N-29 & N-4 & 4     & 29   & 36   & 61 \\ \hline
      62   & 35   & 30   & 3   & N-3   & N-30 & N-35 & N-62 \\ \hline 
      N-63 & N-34 & N-31 & N-2 & 2     & 31   & 34   & 63 \\ \hline
      64   & 33   & 32   & 1   & N-1   & N-32 & N-33 & N-64 \\ \hline
    \end{array} \]
  }
\end{table}

\begin{table}{\scriptsize
    \caption{The right side of the Franklin square F3.} 
    \label{rightfranklintablefillingwithN}
    \[
    \begin{array}{|c|c|c|c|c|c|c|c|c|c|c|c|c|c|c|c|} \hline
      72   & 89   & 104   & 121   &N-121  & N-104 & N-89 & N-72 \\ \hline
      N-71 & N-90 & N-103 & N-122 &122    &103    &90    &71 \\ \hline
      70   & 91   & 102   & 123   &N-123  & N-102 & N-91 & N-70 \\ \hline
      N-69 & N-92 & N-101 & N-124 &124    &101    &92    &69 \\ \hline
      73   & 88   & 105   & 120   &N-120  & N-105 & N-88 & N-73 \\ \hline
      N-74 & N-87 & N-106 & N-119 &119    &106    &87    &74 \\ \hline
      75   & 86   & 107   & 118   &N-118  & N-107 & N-86 & N-75 \\ \hline
      N-76 & N-85 & N-108 & N-117 &117    &108    &85    &76 \\ \hline
      77   & 84   & 109   & 116   &N-116  & N-109 & N-84 & N-77 \\ \hline
      N-78 & N-83 & N-110 & N-115 &115    &110    &83    &78 \\ \hline
      79   & 82   & 111   & 114   &N-114  & N-111 & N-82 & N-79 \\ \hline
      N-80 & N-81 & N-112 & N-113 &113    &112    &81    &80 \\ \hline
      68   & 93   & 100   & 125   &N-125  & N-100 & N-93 & N-68 \\ \hline
      N-67 & N-94 & N-99  & N-126 &126    &99     &94    &67 \\ \hline
      66   & 95   & 98   & 127   &N-127  & N-98 & N-95 & N-66 \\ \hline
      N-65 & N-96 & N-97  & N-128 &128    &97     &96    &65 \\ \hline

    \end{array}
    \]
  }
\end{table}

\begin{table}{\scriptsize
    \caption{Construction of $8 \times 8$ Franklin Square F1.} 
    \label{franklinstep1to4} 

    \[ \begin{array}{ccccccccccccccccccccccccccc}
      \mbox{Step 1} &&&& \mbox{Step 2} \\ \\

      \begin{array}{cc}
        \begin{array}{|c|c|c|c|} \hline
          - & - & 4 & - \\ \hline
          - & 3 & - & -  \\ \hline
        \end{array} & 

        \begin{array}{|c|c|c|c|}  \hline
          - & - & - & - \\ \hline
          - & - & - & -  \\ \hline
        \end{array} \\ \\

        \begin{array}{|c|c|c|c|} \hline
          - & - & 5 & - \\ \hline
          - & 6 & - & -  \\ \hline
          - & - & 7 & -  \\ \hline
          - & 8 & - & -  \\ \hline
        \end{array} & 
        \begin{array}{|c|c|c|c|} \hline
          - & - & - & - \\ \hline
          - & - & - & -  \\ \hline
          - & - & - & -  \\ \hline
          - & - & - & -  \\ \hline
        \end{array}  \\ \\

        \begin{array}{|c|c|c|c|} \hline
          - & - & 2 & - \\ \hline
          - & 1 & - & -  \\ \hline
        \end{array} & 

        \begin{array}{|c|c|c|c|}  \hline
          - & - & - & - \\ \hline
          - & - & - & -  \\ \hline
        \end{array}
      \end{array}

      &&&&

      \begin{array}{cc}
        \begin{array}{|c|c|c|c|} \hline
          - & - & 4 & 13 \\ \hline
          14 & 3 & - & -  \\ \hline
        \end{array} & 

        \begin{array}{|c|c|c|c|}  \hline
          - & - & - & - \\ \hline
          - & - & - & -  \\ \hline
        \end{array} \\ \\

        \begin{array}{|c|c|c|c|} \hline
          - & - & 5 & 12 \\ \hline
          11 & 6 & - & -  \\ \hline
          - & - & 7 & 10  \\ \hline
          9 & 8 & - & -  \\ \hline
        \end{array} & 
        \begin{array}{|c|c|c|c|} \hline
          - & - & - & - \\ \hline
          - & - & - & -  \\ \hline
          - & - & - & -  \\ \hline
          - & - & - & -  \\ \hline
        \end{array}  \\ \\

        \begin{array}{|c|c|c|c|} \hline
          - & - & 2 & 15\\ \hline
          16 & 1 & - & -  \\ \hline
        \end{array} & 

        \begin{array}{|c|c|c|c|}  \hline
          - & - & - & - \\ \hline
          - & - & - & -  \\ \hline
        \end{array}

      \end{array} \\ \\ 
      \mbox{Step 3} &&&& \mbox{Step 4} \\ \\
      \begin{array}{cc}
        \begin{array}{|c|c|c|c|} \hline
          - & - & 4 & 13 \\ \hline
          14 & 3 & - & -  \\ \hline
        \end{array} & 

        \begin{array}{|c|c|c|c|}  \hline
          20 & - & - & - \\ \hline
          - & - & - & 19  \\ \hline
        \end{array} \\ \\

        \begin{array}{|c|c|c|c|} \hline
          - & - & 5 & 12 \\ \hline
          11 & 6 & - & -  \\ \hline
          - & - & 7 & 10  \\ \hline
          9 & 8 & - & -  \\ \hline
        \end{array} & 
        \begin{array}{|c|c|c|c|} \hline
          21 & - & - & - \\ \hline
          - & - & - & 22  \\ \hline
          23 & - & - & -  \\ \hline
          - & - & - & 24  \\ \hline
        \end{array}  \\ \\

        \begin{array}{|c|c|c|c|} \hline
          - & - & 2 & 15\\ \hline
          16 & 1 & - & -  \\ \hline
        \end{array} & 

        \begin{array}{|c|c|c|c|}  \hline
          18 & - & - & - \\ \hline
          - & - & - & 17  \\ \hline
        \end{array}
      \end{array} &&&&
      \begin{array}{cc}
        \begin{array}{|c|c|c|c|} \hline
          - & - & 4 & 13 \\ \hline
          14 & 3 & - & -  \\ \hline
        \end{array} & 

        \begin{array}{|c|c|c|c|}  \hline
          20 & 29 & - & - \\ \hline
          - & - & 30 & 19  \\ \hline
        \end{array} \\ \\

        \begin{array}{|c|c|c|c|} \hline
          - & - & 5 & 12 \\ \hline
          11 & 6 & - & -  \\ \hline
          - & - & 7 & 10  \\ \hline
          9 & 8 & - & -  \\ \hline
        \end{array} & 
        \begin{array}{|c|c|c|c|} \hline
          21 & 28 & - & - \\ \hline
          - & - & 27 & 22  \\ \hline
          23 & 26 & - & -  \\ \hline
          - & - & 25 & 24  \\ \hline
        \end{array}  \\ \\

        \begin{array}{|c|c|c|c|} \hline
          - & - & 2 & 15\\ \hline
          16 & 1 & - & -  \\ \hline
        \end{array} & 

        \begin{array}{|c|c|c|c|}  \hline
          18 & 31 & - & - \\ \hline
          - & - & 32 & 17  \\ \hline
        \end{array}
      \end{array}
    \end{array}
    \]
  }
\end{table}
\begin{table}{\scriptsize
    \caption{Final step in construction of F1} 
    \label{franklinstepfinal} 
    \[
    \begin{array}{cc}
      \begin{array}{|c|c|c|c|} \hline
        N-13 & N-4& 4 & 13 \\ \hline
        14 & 3 & N-3 & N-14  \\ \hline
      \end{array} & 

      \begin{array}{|c|c|c|c|}  \hline
        20 & 29 & N-29 & N-20 \\ \hline
        N-19 & N-30 & 30 & 19  \\ \hline
      \end{array} \\ \\

      \begin{array}{|c|c|c|c|} \hline
        N-12 & N-5 & 5 & 12 \\ \hline
        11 & 6 & N-6 & N-11  \\ \hline
        N-10 & N-7 & 7 & 10  \\ \hline
        9 & 8 & N-8 & N-9  \\ \hline
      \end{array} & 
      \begin{array}{|c|c|c|c|} \hline
        21 & 28 & N-28 & N-21 \\ \hline
        N-22 & N-27 & 27 & 22  \\ \hline
        23 & 26 & N-26 & N-23  \\ \hline
        N-24 &N -25 & 25 & 24  \\ \hline
      \end{array}  \\ \\

      \begin{array}{|c|c|c|c|} \hline
        N-15 & N-2 & 2 & 15\\ \hline
        16 & 1 & N-1 & N-16  \\ \hline
      \end{array} & 

      \begin{array}{|c|c|c|c|}  \hline
        18 & 31 & N-31 & N-18 \\ \hline
        N-17 & N-32 & 32 & 17  \\ \hline
      \end{array}
    \end{array}
    \]
  }
\end{table}

\begin{table}{\scriptsize
    \caption{Step 4 of  Table \ref{tabldistributeright} in
      construction of right side of F3 derived, alternately, by
      swapping columns of partially built left side, and adding 64.}
    \label{firststeprightfromleft}
    \[ \begin{array}{ccccccccccccccccc}
      \begin{array}{|c|c|c|c|c|c|c|c|c|c|c|c|c|c|c|c|} \hline
        8 & 25 & 40 & 57 & - & - & - & - \\ \hline 
        - & - & - & - & 58 & 39 & 26 & 7 \\ \hline 
        6 & 27 & 38 & 59 & - & - & - & - \\ \hline
        - & - & - & - & 60 & 37 & 28 & 5 \\ \hline
        
        9 & 24 & 41& 56& - & - & - & - \\ \hline
        - & - & - & - & 55 & 42 & 23& 10 \\ \hline
        11 & 22 & 43 & 54 & - & - & - & - \\ \hline
        - & - & - & - & 53 & 44 & 21 & 12 \\ \hline
        13 & 20 & 45 & 52 & - & - & - &  \\ \hline
        - & - & - & - & 51 & 46 & 19 & 14 \\ \hline
        15 & 18 & 47 & 50 & - & - & - & - \\ \hline
        - & - & - & - & 49 & 48& 17 & 16 \\ \hline
        
        4 & 29 & 36 &  61  & - & - & - & - \\ \hline
        - & - & - & - & 62 & 35 &  30 & 3 \\ \hline
        2 & 31 & 34 & 63  & - & - & - & - \\ \hline
        - & - & - & - & 64& 33 & 32 & 1 \\ \hline
      \end{array}&+&  
      \begin{array}{|c|c|c|c|c|c|c|c|c|c|c|c|c|c|c|c|} \hline
        64 & 64 & 64 & 64 & 0 & 0 & 0 & 0 \\ \hline
        0 & 0 & 0 & 0 & 64 & 64 & 64 & 64  \\ \hline
        64 & 64 & 64 & 64 & 0 & 0 & 0 & 0 \\ \hline
        0 & 0 & 0 & 0 & 64 & 64 & 64 & 64  \\ \hline
        64 & 64 & 64 & 64 & 0 & 0 & 0 & 0 \\ \hline
        0 & 0 & 0 & 0 & 64 & 64 & 64 & 64  \\ \hline
        64 & 64 & 64 & 64 & 0 & 0 & 0 & 0 \\ \hline
        0 & 0 & 0 & 0 & 64 & 64 & 64 & 64  \\ \hline
        64 & 64 & 64 & 64 & 0 & 0 & 0 & 0 \\ \hline
        0 & 0 & 0 & 0 & 64 & 64 & 64 & 64  \\ \hline
        64 & 64 & 64 & 64 & 0 & 0 & 0 & 0 \\ \hline
        0 & 0 & 0 & 0 & 64 & 64 & 64 & 64  \\ \hline
        64 & 64 & 64 & 64 & 0 & 0 & 0 & 0 \\ \hline
        0 & 0 & 0 & 0 & 64 & 64 & 64 & 64  \\ \hline
        64 & 64 & 64 & 64 & 0 & 0 & 0 & 0 \\ \hline
        0 & 0 & 0 & 0 & 64 & 64 & 64 & 64  \\ \hline
      \end{array} &=& 
      \begin{array}{|c|c|c|c|c|c|c|c|c|c|c|c|c|c|c|c|} \hline
        72 & 89 & 104 & 121 & - & - & - & - \\ \hline 
        - & - & - & - & 122 & 103 & 90 & 71 \\ \hline 
        70 & 91 & 102 & 123 & - & - & - & - \\ \hline
        - & - & - & - & 124 & 101 & 92 & 69 \\ \hline
        
        73 & 88 & 105& 120& - & - & - & - \\ \hline
        - & - & - & - & 119 & 106 & 87& 74 \\ \hline
        75 & 86 & 107 & 118 & - & - & - & - \\ \hline
        - & - & - & - & 117 & 108 & 85 & 76 \\ \hline
        77 & 84 & 109 & 116 & - & - & - &  \\ \hline
        - & - & - & - & 115 & 110 & 83 & 78 \\ \hline
        79 & 82 & 111 & 114 & - & - & - & - \\ \hline
        - & - & - & - & 113 & 112 & 81 & 80 \\ \hline
        
        68 & 93 & 100&  125  & - & - & - & - \\ \hline
        - & - & - & - & 126 & 99 & 94 & 67 \\ \hline
        66 & 95 & 98 & 127  & - & - & - & - \\ \hline
        - & - & - & - & 128& 97 & 96 & 65 \\ \hline
      \end{array}

    \end{array}
    \]
} \end{table}

\begin{table}{\scriptsize
    \caption{Constructing the right side of F3.}
    \label{n-isteprightfromleft}
    \[ \begin{array}{ccccccccccccccccc}
      \begin{array}{|c|c|c|c|c|c|c|c|c|c|c|c|c|c|c|c|} \hline
        8 & 25 & 40 & 57 &  N-57 & N-40 & N-25 & N-8 \\ \hline 
        N-7 & N-26 & N-39 & N-58 & 58 & 39 & 26 & 7 \\ \hline 
        6 & 27 & 38 & 59 &  N-59 & N-38 & N-27 & N-6 \\ \hline
        N-5  & N-28 & N-37 & N-60 & 60 & 37 & 28 & 5 \\ \hline
        9 & 24 & 41& 56&  N-56 & N-41 & N-24 & N-9 \\ \hline
        N-10 & N-23 & N-42 & N-55 & 55 & 42 & 23& 10 \\ \hline
        11 & 22 & 43 & 54 & N-54 & N-43 & N-22 & N-11 \\ \hline
        N-12  & N-21 & N-44 & N-53  & 53 & 44 & 21 & 12 \\ \hline
        13 & 20 & 45 & 52 &  N-52 & N-45 & N-20 & N-13 \\ \hline
        N-14  & N-19 & N-46 & N-51 & 51 & 46 & 19 & 14 \\ \hline
        15 & 18 & 47 & 50 &  N-50 & N-47 & N-18 & N-15 \\ \hline
        N-16  & N-17 & N-48 & N-49  & 49 & 48& 17 & 16 \\ \hline
        4 & 29 & 36 &  61  & N-61 & N-36 & N-29 & N-4 \\ \hline
        N-3   & N-30 & N-35 & N-62 & 62 & 35 &  30 & 3 \\ \hline
        2 & 31 & 34 & 63  &  N-63 & N-34 & N-31 & N-2 \\ \hline
        N-1   & N-32 & N-33 & N-64 & 64& 33 & 32 & 1 \\ \hline
      \end{array}&+&  
      \begin{array}{|c|c|c|c|c|c|c|c|c|c|c|c|c|c|c|c|} \hline
        64 & 64 & 64 & 64 & -64 & -64 & -64 & -64 \\ \hline
        -64 & -64 & -64 & -64 & 64 & 64 & 64 & 64   \\ \hline
        64 & 64 & 64 & 64 & -64 & -64 & -64 & -64 \\ \hline
        -64 & -64 & -64 & -64 & 64 & 64 & 64 & 64   \\ \hline
        64 & 64 & 64 & 64 & -64 & -64 & -64 & -64 \\ \hline
        -64 & -64 & -64 & -64 & 64 & 64 & 64 & 64   \\ \hline
        64 & 64 & 64 & 64 & -64 & -64 & -64 & -64 \\ \hline
        -64 & -64 & -64 & -64 & 64 & 64 & 64 & 64   \\ \hline
        64 & 64 & 64 & 64 & -64 & -64 & -64 & -64 \\ \hline
        -64 & -64 & -64 & -64 & 64 & 64 & 64 & 64   \\ \hline
        64 & 64 & 64 & 64 & -64 & -64 & -64 & -64 \\ \hline
        -64 & -64 & -64 & -64 & 64 & 64 & 64 & 64   \\ \hline
        64 & 64 & 64 & 64 & -64 & -64 & -64 & -64 \\ \hline
        -64 & -64 & -64 & -64 & 64 & 64 & 64 & 64   \\ \hline
        64 & 64 & 64 & 64 & -64 & -64 & -64 & -64 \\ \hline
        -64 & -64 & -64 & -64 & 64 & 64 & 64 & 64   \\ \hline
        
      \end{array}
    \end{array}
    \]
} \end{table}

\begin{table}
  {\scriptsize
    \caption{Deriving the right side of the Franklin square F3 from the left side.} 
    \label{table64fromleft} 
    \[ \begin{array}{ccccc} 
      \begin{array}{|c|c|c|c|c|c|c|c|c|c|c|c|c|c|c|c|} \hline
        8 & 25 & 40 & 57 &  200 & 217 & 232 & 249  \\ \hline
        250 & 231 & 218 & 199 &  58 & 39 & 26 & 7   \\ \hline
        6 & 27 & 38 & 59 & 198 & 219 & 230 & 251   \\ \hline
        252& 229 & 220 & 197 &   60 & 37 & 28 &  5   \\ \hline
        9  & 24  & 41  & 56 &   201 & 216 & 233 & 248  \\ \hline
        247 & 234 & 215 & 202 & 55 & 42 & 23 & 10   \\ \hline
        11 & 22 & 43 & 54 & 203 & 214 & 235 & 246   \\ \hline
        245 & 236 & 213 & 204 & 53 & 44 & 21 & 12  \\ \hline
        13 & 20 & 45 &  52 & 205 & 212 & 237 & 244 \\ \hline 
        243 & 238 & 211 & 206 & 51 & 46 & 19 &  14  \\ \hline
        15 & 18 &  47 &  50 & 207 & 210 & 239 & 242  \\ \hline
        241 & 240 & 209 & 208 & 49 &  48 & 17 & 16    \\ \hline
        4 & 29 &  36 & 61 & 196 &221 & 228 & 253  \\ \hline
        254 & 227 & 222 & 195 &  62  & 35 &  30 &  3  \\ \hline
        2 & 31 &  34 &  63  & 194 & 223 & 226 & 255 \\ \hline
        256 & 225 & 224 &  193 & 64  & 33  & 32 & 1  \\ \hline
      \end{array}
      & + &
      \begin{array}{|c|c|c|c|c|c|c|c|c|c|c|c|c|c|c|c|} \hline
        64 & 64 & 64 & 64 & -64 & -64 & -64 & -64 \\ \hline
        -64 & -64 & -64 & -64 & 64 & 64 & 64 & 64   \\ \hline
        64 & 64 & 64 & 64 & -64 & -64 & -64 & -64 \\ \hline
        -64 & -64 & -64 & -64 & 64 & 64 & 64 & 64   \\ \hline
        64 & 64 & 64 & 64 & -64 & -64 & -64 & -64 \\ \hline
        -64 & -64 & -64 & -64 & 64 & 64 & 64 & 64   \\ \hline
        64 & 64 & 64 & 64 & -64 & -64 & -64 & -64 \\ \hline
        -64 & -64 & -64 & -64 & 64 & 64 & 64 & 64   \\ \hline
        64 & 64 & 64 & 64 & -64 & -64 & -64 & -64 \\ \hline
        -64 & -64 & -64 & -64 & 64 & 64 & 64 & 64   \\ \hline
        64 & 64 & 64 & 64 & -64 & -64 & -64 & -64 \\ \hline
        -64 & -64 & -64 & -64 & 64 & 64 & 64 & 64   \\ \hline
        64 & 64 & 64 & 64 & -64 & -64 & -64 & -64 \\ \hline
        -64 & -64 & -64 & -64 & 64 & 64 & 64 & 64   \\ \hline
        64 & 64 & 64 & 64 & -64 & -64 & -64 & -64 \\ \hline
        -64 & -64 & -64 & -64 & 64 & 64 & 64 & 64   \\ \hline

      \end{array}
    \end{array}
    \]
  }
\end{table}

Summarizing, we derive the following algorithm for constructing Franklin squares.
\begin{algo}(Constructing $n \times n$ Franklin squares.) \label{franklincosntructalgo1}
  \begin{enumerate}
  \item Partial filling of the left side. \label{partialstep}

    Start with the two middle columns of left side, and then work
    outwards two equidistant columns at a time. Fill $n$ numbers at
    every step. We follow the sequence of operations, as described
    above, according to the parity of the distance of the columns.

  \item \label{subfronNfrankalgo} Subtractions from $N$ to complete the left side.

    Each empty cell of the left side is filled with the number
    $N-i$ where $i$ is the entry in the same row in the equidistant
    column. This gives us the left side of the Franklin square.

  \item \label{constructrightfromleftstep} Constructing the right side of the Franklin square from the left side.

    Swap the first $n/4$ columns with the last $n/4$ columns of the
    left side to build the right side of the Franklin square. For odd
    rows of this modified square, $n^2/4$ is added to every entry in the
    first half of the row, and $n^2/4$ is subtracted from every entry of
    the second half. For even rows, $n^2/4$ is subtracted  from every entry in the
    first half of the row, and $n^2/4$ is added to every entry of
    the second half.  This gives us the right side of the Franklin square.

  \end{enumerate}

\end{algo} 

We proceed to show that Algorithm \ref{franklincosntructalgo1} produces a Franklin square. We
start with the following lemma which describes many properties of the
Franklin square, constructed using Algorithm
\ref{franklincosntructalgo1}.

\begin{lemma} \label{prooffranklinalgolemma}
  Let $a_{i,j}$ denote the entries of a $n \times n$ Franklin square, where $i=1,2, \dots, n$ and $j=1,2, \dots, n$. 
  \begin{enumerate}

  \item  \label{consecuticeincolmite} Pair of entries of adjacent rows in a column add to $N \pm 1$
    except for the rows $n/4$ and $3n/4$, as follows.

    Consider a row  $i \in \{1,2, \dots, n\} \setminus
    \{n/4,  3n/4, n\}$.  In  the  top and  bottom  parts of  the
    square, that is, when $i \leq n/4$ or $n/2 < i <n$, we have
    \[
    a_{i,j}+a_{i+1,j} = \left \{
    \begin{array}{llllllllll}
      N+1, & \mbox{if $j$ is odd, and  $i$ is odd,} \\
      N-1, & \mbox{if $j$ is odd, and  $i$ is even,} \\
      N-1, & \mbox{if $j$ is even, and  $i$ is odd,} \\
      N+1, & \mbox{if $j$ is even, and  $i$ is even.} \\
    \end{array}
    \right .
    \]

    For the middle part, the situation is exactly opposite. That is, when
    $n/4 < i \leq n/2$, we have
    \[
    a_{i,j}+a_{i+1,j} =  \left \{ \begin{array}{llllllllll}
      N-1, & \mbox{if $j$ is odd, and  $i$ is odd,} \\
      N+1, & \mbox{if $j$ is odd, and  $i$ is even,} \\
      N+1, & \mbox{if $j$ is even, and  $i$ is odd,} \\
      N-1, & \mbox{if $j$ is even, and  $i$ is even.} \\
    \end{array}
    \right.
    \]
    Finally, we consider rows $n/4$ and $3n/4$.
    \[
    a_{n/4,j}+a_{n/4+1,j} =  \left \{ \begin{array}{llllllllll}
      N+n/4,  & \mbox{if $j$ is odd,} \\
      N-n/4,  & \mbox{if $j$ is even.} \\
    \end{array}
    \right.
    \]

    \[
    a_{3n/4,j}+a_{3n/4+1,j}
    =  \left \{ \begin{array}{llllllllll}
      N-3n/4,   & \mbox{if $j$ is odd,} \\
      N+3n/4,  & \mbox{if $j$ is even.} \\
    \end{array}
    \right.
    \]

  \item \label{colequi} Consider a row $i \leq n/2$ and let $m_i=n/2-1-2(i-1)$. Equidistant
    entries across the Horizontal axis add to either $N+m_i$ or $N
    -m_i$ as follows.
    \[a_{i,j} + a_{n-i+1,j} =  \left \{
    \begin{array}{lllllllll}
      N +  m_i, &  \mbox{ if $i$ is odd and $j$ is odd.} \\
      N -  m_i, &  \mbox{ if $i$ is odd and $j$ is even.} \\
      N -  m_i, &  \mbox{ if $i$ is even and $j$ is odd.} \\
      N +  m_i, &  \mbox{ if $i$ is even and $j$ is even.}
    \end{array}
    \right .  \]

  \item \label{rowequi} Let $m=n^2/2+1$. Equidistant entries across the vertical axis add to either $m$ or $2N-m$ as follows.

    If $i$ is odd 

    \[a_{i,j} + a_{i,n-j+1} =  \left \{
    \begin{array}{lllllllll}
      2N -  m,  &  \mbox{if } j \leq n/4,  \\
      m, &  \mbox{if }  n/4< j < n/2. 
    \end{array}
    \right .  \]

    If $i$ is even, the exact opposite is true. That is,
    \[a_{i,j} + a_{i,n-j+1} =  \left \{
    \begin{array}{lllllllll}
      m,  &  \mbox{if } j \leq n/4,  \\
      2N-m, &  \mbox{if }  n/4< j < n/2. 
    \end{array}
    \right .  \]

  \end{enumerate}
\end{lemma}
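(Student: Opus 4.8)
The plan is to push the construction all the way to explicit closed-form entries and then read off the three claims by carefully organized arithmetic; the work is almost entirely bookkeeping, and the only genuinely delicate point is keeping two different affine formulas straight.

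First I would record exactly what each Up/Down operation places. Tracing the operations as in the construction tables (e.g.\ Table~\ref{tableleftf3steps}), one finds that in the left-side column pair at distance $d$ (starting number $nd+1$) the ``small'' entry in row $i$ is an affine function $v(i,d)=nd+(\text{const})\pm i$, where the additive constant and the sign of the $i$-term depend only on which of the three parts contains $i$ and on the parity of $d$ (even vs.\ odd distance use the two different operation sequences). A uniform placement rule holds across all parts: the small entry sits in $C_l$ exactly when $i$ is even and in $C_r$ exactly when $i$ is odd, while Step~2 fills the partner cell with $N-v(i,d)$. This gives a closed form for every left entry $\ell_{i,j}$. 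For the right side I would use Step~3 to write $a_{i,\,n/2+c}=\ell_{i,\sigma(c)}+\epsilon(i,c)\,n^2/4$, where $\sigma$ is the half-swap of the first $n/4$ columns with the last $n/4$, and the sign $\epsilon(i,c)\in\{+1,-1\}$ flips with the parity of $i$ (for each fixed $c$).

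The structural observation that collapses two of the three parts: in Lemma items \ref{consecuticeincolmite} and \ref{colequi} the two entries being added lie in the \emph{same} column but in rows of opposite parity (adjacent rows $i,i+1$ in item~\ref{consecuticeincolmite}; reflected rows $i$ and $n-i+1$ in item~\ref{colequi}). Hence the two $\pm n^2/4$ contributions on the right side cancel, so it suffices to verify items~\ref{consecuticeincolmite} and~\ref{colequi} on the left side. For item~\ref{consecuticeincolmite}, within a fixed left column consecutive rows alternate between the roles $v$ and $N-v$; since $v(\cdot,d)$ is affine in $i$ with slope $\pm1$ on each part, the pairwise sum telescopes to $N\pm1$, the sign being dictated by the part and the parity of $d$ (equivalently the parity of $j$, since the column index differs from $d$ by $n/4$ or $n/4+1$ and $n/4$ is even). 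At the part boundaries $i=n/4$ and $i=3n/4$ the affine expression switches, and substituting the two different formulas produces the stated $N\pm n/4$ and $N\pm 3n/4$ jumps. For item~\ref{colequi} I would substitute the top- and bottom-part formulas (for $i\le n/4$, where $n-i+1$ lies in the bottom), being careful to attach the correct even/odd-distance version of $v$ to the column; after the reflection $i\mapsto n-i+1$ the $i$-dependence packages into the odd quantity $m_i$ with sign governed by the parities of $i$ and $j$, and the middle-row pairs are handled by the same substitution with the middle formulas.

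Item~\ref{rowequi} is the only place where the left and right sides genuinely interact, because $j$ is a left column while $n-j+1$ is a right one, so the $\pm n^2/4$ does \emph{not} cancel. I would rewrite the right entry via Step~3 and check that the two left columns that appear sit at distances $d_1,d_2$ with $d_1+d_2=n/4-1$; since $n/4$ is even these distances have opposite parity, so one column uses the even form and the other the odd form of $v$. The crux is the identity $v(i,d_1)+v(i,d_2)=n^2/4+1$, which I expect holds uniformly no matter which of the three parts contains row $i$; this is the one computation I would verify most carefully, since it is exactly where the even- and odd-distance formulas must dovetail. Granting it, the two left entries sum to $n^2/4+1$ or to $2N-(n^2/4+1)$ according to the parity of $i$, and adding $\pm n^2/4$ yields precisely $m$ or $2N-m$, with the $j\le n/4$ and $n/4<j<n/2$ cases distinguished by the sign of $\epsilon(i,c)$. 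The main obstacle throughout is not any single estimate but the disciplined management of three interacting parities—row, column/distance, and part—and in particular never losing track of which of the two affine forms of $v$ belongs to a given column.
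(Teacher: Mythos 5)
Your overall route is sound, and it is considerably more than the paper offers: the paper's entire proof of this lemma is the sentence ``The square inherits these properties by construction.'' Your structural claims all check out against Algorithm \ref{franklincosntructalgo1}: the direct entries have the affine forms you describe (for even distance $d$ the top, middle and bottom parts carry $nd+1+n/2-i$, $nd+n/4+i$ and $nd+1+n-i$; for odd $d$ they carry $nd+n/2+i$, $nd+1+3n/4-i$ and $nd+i$); the direct entry sits in $C_l$ exactly when $i$ is even; the $\pm n^2/4$ contributions cancel in the same-column sums of items \ref{consecuticeincolmite} and \ref{colequi}, and the block swap preserves column parity; the paired distances in item \ref{rowequi} satisfy $d_1+d_2=n/4-1$; and your crux identity $v(i,d_1)+v(i,d_2)=n^2/4+1$ does hold in all three parts when $d_1,d_2$ have opposite parity. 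This machinery genuinely proves item \ref{consecuticeincolmite} (including the boundary rows $n/4$ and $3n/4$), item \ref{rowequi}, and item \ref{colequi} in the range $i\le n/4$.

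The gap is the closing clause of your treatment of item \ref{colequi}: ``the middle-row pairs are handled by the same substitution with the middle formulas.'' They are not. For $n/4<i\le n/2$ both $i$ and $n-i+1$ lie in the middle part, and the substitution gives, e.g.\ for even $d$, $i$ even, $j=C_l$, the value $v(i,d)+N-v(n-i+1,d)=N+2i-n-1=N-m_{i-n/4}$: the magnitude restarts at the part boundary (it is $m_{i-n/4}$, not $m_i$) and the signs come out opposite to the top-part pattern. So the honest computation does not produce the formula stated in item \ref{colequi}; in fact that formula is false for middle rows. Concretely, F1 is the algorithm's output (Table \ref{franklinstepfinal}), and there $a_{3,1}+a_{6,1}=53+9=62=N-3$, whereas item \ref{colequi} with $m_3=n/2-1-2(3-1)=-1$ predicts $N+m_3=64$; similarly in F3, $a_{5,1}+a_{12,1}=201+49=250=N-7$, not $N+m_5=256$. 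The paper itself silently uses the corrected version when proving the bend-diagonal property (its second bracket carries magnitudes $n/2-1,\,n/2-3,\dots$ restarting, with ``exactly opposite'' signs). So at the one place where the statement actually breaks, your proposal asserts a false intermediate step; carried out faithfully, your method would instead force the correction $a_{i,j}+a_{n-i+1,j}=N\mp m_{i-n/4}$ (signs opposite to those displayed in the lemma) for $n/4<i\le n/2$.
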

\begin{proof}
  The square inherits these properties by construction.
\end{proof}

\begin{prop} Algorithm \ref{franklincosntructalgo1} produces a Franklin square.

\end{prop}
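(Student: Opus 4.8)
The plan is to verify, one at a time, the four defining properties of a Franklin square for the array produced by Algorithm~\ref{franklincosntructalgo1}, leaning throughout on Lemma~\ref{prooffranklinalgolemma} and on the combinatorial bookkeeping of the construction. First I would settle property~(1), that every integer in $\{1,\dots,n^2\}$ appears exactly once. In the partial filling of Step~\ref{partialstep}, the pair of columns at distance $d$ on the left receives the $n$ consecutive integers $nd+1,\dots,nd+n$; as $d$ runs over $0,1,\dots,n/4-1$ these exhaust $1,\dots,n^2/4$ with no repetition, and the right side analogously receives $n^2/4+1,\dots,n^2/2$. Within one pair of equidistant columns the three Up/Down operations partition the $n$ rows into the top, middle and bottom parts, and because each operation alternates between $C_l$ and $C_r$ as it advances, every row gets a number in exactly one of the two columns. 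Hence in each equidistant pair and each row precisely one cell is filled and one is empty. Step~\ref{subfronNfrankalgo} fills every empty cell with $N-i$, where $i$ is its filled partner; since $i\mapsto N-i$ maps $\{1,\dots,n^2/2\}$ bijectively onto $\{n^2/2+1,\dots,n^2\}$ and each image lands in a distinct cell, all of $\{1,\dots,n^2\}$ is used exactly once, and the swap-and-shift of Step~\ref{constructrightfromleftstep} preserves this count on the right side.

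For the row, column and sub-square sums I would use the adjacent-pair relation of Lemma~\ref{prooffranklinalgolemma}(\ref{consecuticeincolmite}). A half row lies in a single side and splits into $n/4$ equidistant pairs $(C_l,C_r)$, each summing to $N$ by the previous paragraph, so its sum is $(n/4)N=M/2$ and every full row sums to $M$. A full column splits into the adjacent pairs $(1,2),(3,4),\dots,(n-1,n)$, which is legitimate because $n/4$ is even and so no pair straddles the exceptional rows $n/4$ or $3n/4$; the $n/8$ pairs in the top part and the $n/8$ in the bottom part carry one sign of the $\pm1$ deviation while the $n/4$ pairs in the middle part carry the opposite sign, so the deviations cancel and the column sums to $(n/2)N=M$. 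Restricting to rows $1,\dots,n/2$ gives the half-column sum $M/2$ by the same cancellation. For a $2\times2$ sub-square on rows $i,i+1$ and columns $j,j+1$, the two column contributions $a_{i,j}+a_{i+1,j}$ and $a_{i,j+1}+a_{i+1,j+1}$ differ only in the sign of their deviation, since $j$ and $j+1$ have opposite parity; they therefore add to $2N$, and the same parity cancellation disposes of the exceptional rows $i\in\{n/4,3n/4\}$, where the deviation is $\pm n/4$. For the continuous sub-squares that wrap vertically I would invoke Lemma~\ref{prooffranklinalgolemma}(\ref{colequi}) with $i=1$ to handle the row pair $(n,1)$, whose $\pm m_1$ deviations again cancel between adjacent columns, while horizontal wrap follows from the opposite parity of columns $n$ and $1$.

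The bend-diagonal property~(3) is where I expect the real work to lie. A bend diagonal of the row-bending type meets each row exactly once, its column index increasing by one per row down to the central row and then decreasing, so that rows $i$ and $n+1-i$ sit in the \emph{same} column; Lemma~\ref{prooffranklinalgolemma}(\ref{colequi}) then makes each such pair equal $N\pm m_i$, and summing over $i=1,\dots,n/2$ gives $(n/2)N=M$ once one checks that the deviations $\pm m_i$ cancel in pairs. A bend diagonal of the column-bending type is handled symmetrically by Lemma~\ref{prooffranklinalgolemma}(\ref{rowequi}), pairing columns $j$ and $n+1-j$ in a common row so that each pair is $m$ or $2N-m$; here the sum is $M$ exactly when equally many pairs take each value, so the crux is a parity count showing that precisely $n/4$ of the $n/2$ pairs equal $m$.

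The main obstacle, then, is organizing the bend-diagonal argument uniformly. There are four orientations to treat and, because the property must hold \emph{continuously}, every toroidal shift of each bend diagonal as well; under a shift the pairing $i\leftrightarrow n+1-i$ (respectively $j\leftrightarrow n+1-j$) must be re-centred on the new fold, and one must verify that the cancellations of the $\pm m_i$, or the balance of $m$ against $2N-m$, survive, tracking carefully the parities of the row and column indices and checking that no exceptional row $n/4$ or $3n/4$ ever disrupts a pairing. Once this case analysis is carried out, all four defining properties hold and Algorithm~\ref{franklincosntructalgo1} produces a Franklin square.
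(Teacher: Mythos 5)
Your proposal is correct and takes essentially the same approach as the paper's proof: every defining property is reduced to the pair-sum identities of Lemma \ref{prooffranklinalgolemma}, with the same pairings (adjacent rows within columns and within $2\times 2$ blocks, horizontal-axis pairs for the left/right bend diagonals, vertical-axis pairs for the top/bottom ones) and the same cancellations of $\pm m_i$ and $m$ versus $2N-m$ parity counts identified as the crux. The only real differences are cosmetic: you verify property (1) --- that each of $1,\dots,n^2$ occurs exactly once --- explicitly, which the paper leaves implicit, and your column bookkeeping uses the part ranges $i<n/4$, $n/4<i<3n/4$, $i>3n/4$, which is the reading consistent with the construction and with the paper's own quarter-column computation (the ranges as literally printed in the lemma are misstated).
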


\begin{proof}
  Let $a_{i,j}$ denote the entries of an $n \times n$ square
  constructed by Algorithm \ref{franklincosntructalgo1}. Let $N =
  n^2+1$ and $M$ denote the magic sum.

  \begin{enumerate}
  \item $2 \times 2$ sub-square sums add to $2N$ continuously.

    Consider a row $i \in \{1,2, \dots, n\} \setminus \{n/4, 3n/4, n\}$.
    By Part \ref{consecuticeincolmite} in Lemma
    \ref{prooffranklinalgolemma},
    \[
    \mbox{ if } a_{i,j} + a_{i+1,j} = N+1, \mbox{ then } a_{i,j+1} + a_{i+1,j+1} = N-1.
    \]
    On the other hand, 
    \[
    \mbox{ if } a_{i,j} + a_{i+1,j} = N-1, \mbox{ then } a_{i,j+1} + a_{i+1,j+1} = N+1.
    \]
    Consequently, for all $i \in \{1,2, \dots, n\} \setminus \{n/4, 3n/4,
    n\}$ and all $j$, \[a_{i,j}+a_{i+1,j} + a_{i+1,j}+ a_{i+1,j+1} = 2N.\]

    Now we consider the rows $n/4$ and $3n/4$. 
    By Part \ref{consecuticeincolmite} in Lemma
    \ref{prooffranklinalgolemma},
    \[ \begin{array}{lllllllll}
      \mbox{ if } a_{n/4,j} + a_{n/4+1,j} = N+n/4, \mbox{ then } a_{n/4,j+1} + a_{n/4,j+1} = N-n/4, \mbox{ and } \\
      \mbox{ if } a_{n/4,j} + a_{n/4+1,j} = N-n/4, \mbox{ then } a_{n/4,j+1} + a_{n/4,j+1} = N+n/4.
    \end{array} 
    \]
    Also,
    \[\begin{array}{lllllllll}
    \mbox{ if } a_{3n/4,j} + a_{3n/4+1,j} = N+3n/4, \mbox{ then } a_{3n/4,j+1} + a_{3n/4,j+1} = N-3n/4, \mbox{ and } \\
    \mbox{ if } a_{3n/4,j} + a_{3n/4+1,j} = N-3n/4, \mbox{ then } a_{3n/4,j+1} + a_{3n/4,j+1} = N+3n/4.
    \end{array} 
    \]

    Consequently, all the $2 \times 2$ sub-squares, within the Franklin
    square, add to $2N$. Next, we verify the continuity of this property.

    Part \ref{colequi} of Lemma \ref{prooffranklinalgolemma} implies
    \[a_{1,j}+a_{n,j} = \left \{ \begin{array}{llllllll} 
      N+m_1, & \mbox{ if $j$ is odd, } \\
      N-m_1, & \mbox{ if $j$ is even. }
    \end{array}
    \right.
    \]
    Consequently,  the $2 \times 2$ sub-squares formed
    by rows $1$ and $n$ add to $2N$.  Part \ref{rowequi} of Lemma
    \ref{prooffranklinalgolemma} implies 

    \[ a_{i,1}+a_{i,n} =  \left \{ \begin{array}{llllllll}
      2N-m, & \mbox{ if $i$ is odd, } \\
      m,  & \mbox{ if $i$ is even. }
    \end{array}
    \right.
    \]
    Thus, the $2 \times 2$ sub-squares formed by columns $1$ and $n$ add
    to $2N$. This proves that the continuity property of $2 \times 2$
    sub-squares hold for squares constructed by Algorithm
    \ref{franklincosntructalgo1}.

  \item  Half row and half column sums add to $M/2$. Row and column sums add to $M$.

    By Part \ref{consecuticeincolmite} of Lemma
    \ref{prooffranklinalgolemma}, quarter column sums are as follows.
    \[
    a_{1,j} + a_{2,j} + \cdots + a_{n/4,j} = \left \{ 
    \begin{array}{cccccc}  
      \frac{n}{8}(N+1), & \mbox{ when $j$ is odd, } \\ \\
      \frac{n}{8}(N-1),& \mbox{ when $j$ is even. } 
    \end{array} \right.
    \]

    \[
    a_{n/4+1,j} + a_{n/4+2,j} + \cdots + a_{n/2,j} = \left \{ 
    \begin{array}{cccccc}  
      \frac{n}{8}(N-1), & \mbox{ when $j$ is odd, } \\ \\
      \frac{n}{8}(N+1), & \mbox{ when $j$ is even. } 
    \end{array} \right.
    \]

    \[
    a_{n/2+1,j} + a_{n/2+2,j} + \cdots + a_{3n/4,j} = \left \{ 
    \begin{array}{cccccc}  
      \frac{n}{8}(N-1), & \mbox{ when $j$ is odd, } \\ \\
      \frac{n}{8}(N+1), & \mbox{ when $j$ is even. } 
    \end{array} \right.
    \]
    \[
    a_{3n/4+1,j} + a_{3n/4+2,j} + \cdots + a_{n,j} = \left \{ 
    \begin{array}{cccccc}  
      \frac{n}{8}(N+1), & \mbox{ when $j$ is odd, } \\ \\
      \frac{n}{8}(N-1),   & \mbox{ when $j$ is even. } 
    \end{array} \right.
    \]

    Consequently, for all $j$,

    \[\begin{array}{cccccc}  
    a_{1,j} + a_{2,j} + \cdots + a_{n/2,j} = \frac{n}{4}N. \\
    a_{n/2+1,j} + a_{n/2+1,j} + \cdots + a_{n,j} = \frac{n}{4}N.
    \end{array} 
    \]

    That is, all the half columns add to $(n/4)N$, which is half the
    magic sum. Therefore, all the columns add to the magic sum.  By
    construction, the way subtractions were done from $N$, (see Part
    \ref{subfronNfrankalgo} in Algorithm \ref{franklincosntructalgo1}),
    half rows add to $(n/4)N$. Hence full rows add to $(n/2)N$ which is
    the magic sum.

  \item Bend diagonals add to $M$.

    To prove that the left bend diagonals add to the magic sum, we add
    the entries, pairwise, where each pair is equidistant from the
    horizontal axis.  Let $1 \leq j < n$, then by Part \ref{colequi} of
    Lemma \ref{prooffranklinalgolemma}, if $1 \leq i < n/4$, and if,

    \[\begin{array}{lllllllllll}  
    a_{i,j} + a_{n-i+1,j} = N+m_i, \mbox{ then, } a_{i+1,j+1} + a_{n-i,j+1} = N+m_i, \mbox{ and if } \\
    a_{i,j} + a_{n-i+1,j} = N-m_i, \mbox{ then, } a_{i+1,j+1} + a_{n-i,j+1} = N-m_i. 
    \end{array}
    \]

    Observe that, if $n/4+1 \leq i < n/2$, then the signs for $m_i$ in
    the above sums are exactly opposite, by Part \ref{colequi} of Lemma
    \ref{prooffranklinalgolemma}.  Let $1 \leq j \leq n/2+1$, then, if $j$ is odd, the left
    bend diagonal sum starting with row $1$ and column $j$ adds as
    follows.

    \[ \begin{array}{llllllllllllll}
      \left [ (a_{1,j} + a_{n,j}) + (a_{2, j+1} + a_{n-1, j+1}) +  \cdots + (a_{\frac{n}{4}, j+\frac{n}{4}-1} + a_{\frac{3n}{4}+1, j+\frac{n}{4}-1}) \right ]\\ \\
      + \left [(a_{\frac{n}{4}+1, j+\frac{n}{4}}+a_{\frac{3n}{4}, j+\frac{n}{4}}) + \cdots + (a_{n/2,j+n/2-1}  + a_{n/2+1,j+n/2-1}) \right ]\\ \\
      =   \left [N+(\frac{n}{2} - 1 ) +  N -  (\frac{n}{2} - 3 ) -  N+(\frac{n}{2} - 5 ) + \cdots +  N-1\right ] \\ \\
      + \left [ N-(\frac{n}{2} - 1 ) +  N +  (\frac{n}{2} - 3 ) -  N-(\frac{n}{2} - 5 ) + \cdots +  N+1\right ]
      \\ \\  = \frac{n}{2}N =  M. 
    \end{array} 
    \]

    A similar argument gives us that the even bend diagonals also add to $M$.
    Consequently, the left bend diagonals, when $j=1,2, \dots, n/2+1$, add
    to the magic sum.

    Let  $n/2+1 < j \leq n$, the left bend diagonal sums are 
    \[ \begin{array}{llllllllllllll}
      (a_{1,j} + a_{n,j}) +  (a_{2,j} + a_{n-1, j+1})+ \cdots +  (a_{n-j+1, n} + a_{j,n}) \\ \\
      + (a_{n-j+2, 1} + a_{j-1,1}) + (a_{n-j+3, 2} + a_{j-2,2})+ \cdots +  (a_{\frac{n}{2}, j-\frac{n}{2}-1}+a_{\frac{n}{2}+1, j-\frac{n}{2}-1} ).
    \end{array} 
    \]
    By Part \ref{colequi} of Lemma \ref{prooffranklinalgolemma}, since the
    sums depend only on the parity of $j$, we get that the necessary
    cancellations happen, and these sums, also, add to $M$.

    For example, in the case of of F1 (see Table \ref{franklinstepfinal}) , the seventh bend diagonal sum is 
    \[\begin{array}{llllllllllllll}
    (a_{1,7}+a_{8,7}) + (a_{2,8}+a_{7,8})+(a_{3,1}+a_{6,1})+ (a_{4,2}+a_{5,2}) \\ \\
    = (N+3)+(N+1)+(N-3) + (N-1) = 4N = M.
    \end{array} 
    \]

    Thus, the left bend diagonals add to the magic sum,
    continuously. The proof that all right bend diagonals add to the magic
    sum, is similar to the case of left bend diagonals. The proof depends,
    mainly, on the fact that equidistant entries across the horizontal
    axis add to either $N+m_i$ or $N -m_i$, and all $m_i$ cancel in the
    final sum.

    Similar argument is used to prove that the top and bottom bend
    diagonals add to magic sum. By Part \ref{rowequi} of Lemma
    \ref{prooffranklinalgolemma}, pairs of equidistant entries across the
    vertical axis add to $m$ or $2N-m$. 

    For $1 \leq i \leq n/2+1$, let $i$ be odd, then the $i$-th top bend diagonal sum is given below.
    \[\begin{array}{llllllllll}  
    \left [ (a_{i,1} + a_{i,n})+ (a_{i+1,2} + a_{i+1,n-1})+ \cdots + (a_{i+\frac{n}{4}-1, \frac{n}{4}}+a_{i+\frac{n}{4}-1, \frac{3n}{4}+1}) \right ]  \\ \\
    + \left [ (a_{i+\frac{n}{4}, \frac{n}{4}+1} + a_{i+\frac{n}{4}, \frac{3n}{4}} ) + \cdots + (a_{i+\frac{n}{2}-1, \frac{n}{2}} + a_{i+\frac{n}{2}, \frac{n}{2}+1 }) \right ]  \\ \\
    = \left [(2N-m) + m  + \cdots +m  \right ] +  \left [ (2N-m) + m + \cdots + m  \right ] \\ \\
    = \frac{n}{2}N = M.
    \end{array}
    \]
    When $i$ is even, $m$ and  $2N-m$ are replaced with each other, wherever they appear in the above
    sum. Thus, the top bend diagonal sums add to $M$ when $1 \leq i \leq
    n/2+1$.

    For $n/2 + 1 < i \leq n$, the top bed diagonal sums are 
    \[\begin{array}{llllllllll}  
    (a_{i,1} + a_{i,n})+ (a_{i+1,2} + a_{i+1,n-1})+ \cdots (a_{n,n-i+1} + a_{n,i} )  \\ \\
    +(a_{1, n-i+2}+a_{1, i-1}) + (a_{2, n-i+3}+a_{2, i-2})+ \cdots +(a_{i-\frac{n}{2}-1, \frac{n}{2}}+ a_{i-\frac{n}{2}-1, \frac{n}{2}+1})
    \end{array}
    \]

    Again, by Part \ref{rowequi} of Lemma \ref{prooffranklinalgolemma}, it
    can be checked that the top bed diagonal sums add to $M$.

    For example, in the case of F1 (see Table \ref{franklinstepfinal}), the seventh top bend diagonal sum is
    \[\begin{array}{llllllllll}  
    (a_{7,1}+a_{7,8}) + (a_{8,2}+a_{8,7}) + (a_{1,3}+a_{1,6}) + (a_{2,4}+a_{2,5}) \\ \\
    = (2N-m)+ m + m + (2N-m) = 4N = M. 
    \end{array}
    \]

    Consequently, the top bend diagonals add to the magic sum,
    continuously. A similar proof, applying Part \ref{rowequi} of Lemma
    \ref{prooffranklinalgolemma}, shows that the bottom bend diagonals add
    to the magic sum, continuously.
    
    Thus, a square constructed by Algorithm \ref{franklincosntructalgo1} is
    a Franklin square.
  \end{enumerate}
\end{proof}

We now show that the Algorithm \ref{franklincosntructalgo1} is very
similar to Narayana's method.  That is, we show how a Franklin square
can be constructed as a superimposition of Chadya and Chadaka. We
illustrate the derivation of a Chadya and a Chadaka square using the
example of F3, before formulating an algorithm.  Since $n$ numbers are
placed at every step, the entries in a partially completed left side
of a Franklin square, before the final step of subtraction from $N$,
can be rewritten as multiples on $n$. For example, for F3, since $16$
numbers are placed at every step, the entries in the partially
completed left side of Franklin square F3, in Step 4 of Table
\ref{tableleftf3steps}, can be rewritten as multiples of $16$, as
follows.

{\scriptsize
  \[ 
  \begin{array}{|c|c|c|c|c|c|c|c|c|c|c|c|c|c|c|c|} \hline
    - & - & - & - & 8 & 9+16 & 8+32 & 9+48 \\ \hline
    10+48 & 7+32 & 10+16& 7 & - & - & - & - \\ \hline
    - & - & - & - & 6 & 11+16 & 6+32 & 11+48 \\ \hline
    12+48 & 5+32 & 12+16 & 5 & - & - & - & - \\ \hline
    - & - & - & - & 9 & 8+16 & 9+32 & 8+48 \\ \hline
    7+48 & 10+32 & 7+17 & 10 & - & - & - & - \\ \hline
    - & - & - & - & 11 & 6+16 & 11+32 & 6+48 \\ \hline
    5+48 & 12+32 & 5+16 & 12 & - & - & - & - \\ \hline
    - & - & - & - & 13 & 4+16 & 13+32 & 4+48 \\ \hline
    3+48 & 14+32 & 3+16& 14 & - & - & - & - \\ \hline
    - & - & - & - & 15 & 2+16 & 15+32 & 2+48 \\ \hline
    1+48 & 16+32 & 1+16 & 16 & - & - & - & - \\ \hline
    - & - & - & - & 4 & 13+16 & 4+32 & 13+48 \\ \hline
    14+48 & 3+32 & 14+16 & 3 & - & - & - & - \\ \hline
    - & - & - & - & 2 & 15+16 & 2+32 & 15+48 \\ \hline
    16+48 & 1+32 & 16+16 & 1 & - & - & - & - \\ \hline
  \end{array}
  \]
}

Since $N-rn-i = (n^2+1)-rn-i = (n^2-( r+1)n) +(n+1-i)$, the
subtractions from $N$ for F3, in Table \ref{tableleftf3fillingN}, can be
rewritten as follows.

{\scriptsize
  \[ 
  \begin{array}{|c|c|c|c|c|c|c|c|c|c|c|c|c|c|c|c|} \hline
    8+192 & 9+208& 8+224 & 9+240 & 8 & 9+16 & 8+32 & 9+48 \\ \hline
    10+ 48 & 7+32 & 10+16& 7 & 10+ 240 & 7+224 & 10+208 & 7+192 \\ \hline
    6+192& 11+208& 6+224 & 11+240 & 6 & 11+16 & 6+32 & 11+48 \\ \hline
    12+48 & 5+32 & 12+16 & 5 & 12+ 240 & 5+224 & 12+208 & 5+192 \\ \hline
    9+192 & 8+208& 9+224 & 8+240 & 9 & 8+16 & 9+32 & 8+48 \\ \hline
    7+48 & 10+32 & 7+16 & 10 & 7+ 240 & 10+224 & 7+208 & 10+192 \\ \hline
    11+192 & 6+208& 11+224 & 6+240 & 11 & 6+16 & 11+32 & 6+48 \\ \hline
    5+48 & 12+32 & 5+16 & 12 &  5+ 240 & 12+224 & 5+208 & 12+192 \\ \hline
    13+192 & 4+208& 13+224 & 4+240 & 13 & 4+16 & 13+32 & 4+48 \\ \hline
    3+48 & 14+32 & 3+16& 14 & 3+ 240 & 14+224 & 3+208 & 14+192 \\ \hline
    15+192 & 2+208& 15+224 & 2+240 & 15 & 2+16 & 15+32 & 2+48 \\ \hline
    1+48 & 16+32 & 1+16 & 16 &  1+ 240 & 16+224 & 1+208 & 16+192 \\ \hline
    13+192 & 4+208& 13+224 & 4+240 & 4 & 13+16 & 4+32 & 13+48 \\ \hline
    14+48 & 3+32 & 14+16 & 3 &  14+ 240 & 3+224 & 14+208 & 3+192 \\ \hline
    15+192& 2+208& 15+224 & 2+240 &  2 & 15+16 & 2+32 & 15+48 \\ \hline
    16+48 & 1+32 & 16+16 & 1 &  16+ 240 & 1+224 & 16+208 & 1+192 \\ \hline
  \end{array}
  \]
}

Consequently, the left side of the Franklin square can then be split
in to two squares. We call these squares {\em Chadya and flipped
  Chadaka of the left side of a Franklin square}. For example, the
left side of F3 is the sum of the Chadya and flipped Chadaka of the
left side of F3, as shown below.

{\scriptsize
  
  \[ \begin{array}{ccccccccccccc}
\mbox{Left side of F3} &=& 
   \mbox{Chadya of left side of F3 } & + & \mbox{Flipped Chadaka of left side of F3} \\ \\
&&
    \begin{array}{|c|c|c|c|c|c|c|c|c|c|c|c|c|c|c|c|} \hline
      8 & 9 & 8 & 9 & 8 & 9 & 8 & 9 \\ \hline
     10 & 7 & 10& 7 & 10 & 7 & 10 & 7 \\ \hline
      6 & 11 & 6 & 11 &  6 & 11 & 6 & 11 \\ \hline
      12 & 5 & 12 & 5 & 12 & 5 & 12 & 5 \\ \hline
      9 & 8 & 9 & 8 & 9 & 8 & 9 & 8 \\ \hline
      7 & 10 & 7 & 10 & 7 & 10 & 7 & 10 \\ \hline
      11 & 6 & 11 & 6 & 11 & 6 & 11 & 6 \\ \hline
      5 & 12 & 5 & 12 &  5 & 12 & 5 & 12 \\ \hline
      13 & 4 & 13 & 4 & 13 & 4 & 13 & 4 \\ \hline
      3 & 14 & 3& 14 & 3 & 14 & 3& 14 \\ \hline
      15 & 2 & 15 & 2& 15 & 2 & 15 & 2 \\ \hline
      1 & 16 & 1 & 16 & 1 & 16 & 1 & 16 \\ \hline
      4 & 13 & 4 & 13& 4 & 13 & 4 & 13 \\ \hline
      14 & 3 & 14 & 3 &    14 & 3 & 14 & 3 \\ \hline
      2 & 15 & 2 & 15& 2 & 15 & 2 & 15 \\ \hline
      16 & 1 & 16 & 1 &  16 & 1 & 16 & 1 \\ \hline
    \end{array} & \bf{+} &
    \begin{array}{|c|c|c|c|c|c|c|c|c|c|c|c|c|c|c|c|} \hline
      192 & 208 & 224& 240 & 0 & 16 & 32 & 48 \\ \hline
      48 & 32 & 16& 0 & 240 & 224 & 208 & 192 \\ \hline
      192 & 208 & 224& 240 & 0 & 16 & 32 & 48 \\ \hline
      48 & 32 & 16 & 0 &  240 & 224 & 208 & 192 \\ \hline
      
      192 & 208 & 224& 240 & 0 & 16 & 32 & 48 \\ \hline
      48 & 32 & 16 & 0 &  240  & 224 & 208 & 192 \\ \hline
      192 & 208 & 224& 240 &  0 & 16 & 32 & 48 \\ \hline
      48 & 32 & 16 & 0 &240 & 224 & 208 & 192 \\ \hline
      192 & 208 & 224& 240 & 0 & 16 & 32 & 48 \\ \hline
      48 & 32 & 16 & 0 &  240  & 224 & 208 & 192 \\ \hline
      192 & 208 & 224& 240 &  0 & 16 & 32 & 48 \\ \hline
      48 & 32 & 16 & 0 &240 & 224 & 208 & 192 \\ \hline

      192 & 208 & 224& 240 & 0 & 16 & 32 & 48 \\ \hline
      48 & 32 & 16 & 0 &  240  & 224 & 208 & 192 \\ \hline
      192 & 208 & 224& 240 &  0 & 16 & 32 & 48 \\ \hline
      48 & 32 & 16 & 0 &240 & 224 & 208 & 192 \\ \hline
    \end{array} 
  \end{array}
  \]
}

We now apply Step \ref{constructrightfromleftstep} of Algorithm
\ref{franklincosntructalgo1}.  To get the right side from the left
side, we swap the first $n/4$ columns with the last $n/4$ columns of
both the Chadya and Chadaka of the left side of the Franklin square,
and then add and subtract $n^2/4$. In case of F3, we get

{\scriptsize
  
  \[ \begin{array}{ccccc} 
    \begin{array}{|c|c|c|c|c|c|c|c|c|c|c|c|c|c|c|c|} \hline
      8 & 9 & 8 & 9 & 8 & 9 & 8 & 9 \\ \hline
      10 & 7 & 10& 7 & 10 & 7 & 10 & 7 \\ \hline
      6 & 11 & 6 & 11 &  6 & 11 & 6 & 11 \\ \hline
      12 & 5 & 12 & 5 & 12 & 5 & 12 & 5 \\ \hline
      9 & 8 & 9 & 8 & 9 & 8 & 9 & 8 \\ \hline
      7 & 10 & 7 & 10 & 7 & 10 & 7 & 10 \\ \hline
      11 & 6 & 11 & 6 & 11 & 6 & 11 & 6 \\ \hline
      5 & 12 & 5 & 12 &  5 & 12 & 5 & 12 \\ \hline
      13 & 4 & 13 & 4 & 13 & 4 & 13 & 4 \\ \hline
      3 & 14 & 3& 14 & 3 & 14 & 3& 14 \\ \hline
      15 & 2 & 15 & 2& 15 & 2 & 15 & 2 \\ \hline
      1 & 16 & 1 & 16 & 1 & 16 & 1 & 16 \\ \hline
      4 & 13 & 4 & 13& 4 & 13 & 4 & 13 \\ \hline
      14 & 3 & 14 & 3 &    14 & 3 & 14 & 3 \\ \hline
      2 & 15 & 2 & 15& 2 & 15 & 2 & 15 \\ \hline
      16 & 1 & 16 & 1 &  16 & 1 & 16 & 1 \\ \hline
    \end{array} & + &
    \begin{array}{|c|c|c|c|c|c|c|c|c|c|c|c|c|c|c|c|} \hline
      0 & 16 & 32 & 48 &  192 & 208 & 224& 240 \\ \hline
      240 & 224 & 208 & 192 &  48 & 32 & 16& 0 \\ \hline
      0 & 16 & 32 & 48 &  192 & 208 & 224& 240 \\ \hline
      240 & 224 & 208 & 192 &  48 & 32 & 16& 0 \\ \hline
      
      0 & 16 & 32 & 48 &  192 & 208 & 224& 240 \\ \hline
      240 & 224 & 208 & 192 &  48 & 32 & 16& 0 \\ \hline
      0 & 16 & 32 & 48 &  192 & 208 & 224& 240 \\ \hline
      240 & 224 & 208 & 192 &  48 & 32 & 16& 0 \\ \hline
      0 & 16 & 32 & 48 &  192 & 208 & 224& 240 \\ \hline
      240 & 224 & 208 & 192 &  48 & 32 & 16& 0 \\ \hline
      0 & 16 & 32 & 48 &  192 & 208 & 224& 240 \\ \hline
      240 & 224 & 208 & 192 &  48 & 32 & 16& 0 \\ \hline
      
      0 & 16 & 32 & 48 &  192 & 208 & 224& 240 \\ \hline
      240 & 224 & 208 & 192 &  48 & 32 & 16& 0 \\ \hline
      0 & 16 & 32 & 48 &  192 & 208 & 224& 240 \\ \hline
      240 & 224 & 208 & 192 &  48 & 32 & 16& 0 \\ \hline
    \end{array}  & + &
    \begin{array}{|c|c|c|c|c|c|c|c|c|c|c|c|c|c|c|c|} \hline
      64 & 64 & 64 & 64 & -64 & -64 & -64 & -64 \\ \hline
      -64 & -64 & -64 & -64 & 64 & 64 & 64 & 64   \\ \hline
      64 & 64 & 64 & 64 & -64 & -64 & -64 & -64 \\ \hline
      -64 & -64 & -64 & -64 & 64 & 64 & 64 & 64   \\ \hline
      64 & 64 & 64 & 64 & -64 & -64 & -64 & -64 \\ \hline
      -64 & -64 & -64 & -64 & 64 & 64 & 64 & 64   \\ \hline
      64 & 64 & 64 & 64 & -64 & -64 & -64 & -64 \\ \hline
      -64 & -64 & -64 & -64 & 64 & 64 & 64 & 64   \\ \hline
      64 & 64 & 64 & 64 & -64 & -64 & -64 & -64 \\ \hline
      -64 & -64 & -64 & -64 & 64 & 64 & 64 & 64   \\ \hline
      64 & 64 & 64 & 64 & -64 & -64 & -64 & -64 \\ \hline
      -64 & -64 & -64 & -64 & 64 & 64 & 64 & 64   \\ \hline
      64 & 64 & 64 & 64 & -64 & -64 & -64 & -64 \\ \hline
      -64 & -64 & -64 & -64 & 64 & 64 & 64 & 64   \\ \hline
      64 & 64 & 64 & 64 & -64 & -64 & -64 & -64 \\ \hline
      -64 & -64 & -64 & -64 & 64 & 64 & 64 & 64   \\ \hline
    \end{array}
  \end{array}
  \]
}

Observe that the Chadya, is unchanged by the column swaps. Thus the
Chadya of the left side and right side of the Franklin square are the
same. The Flipped Chadaka of the right side is obtained by adding the last
two squares in the equation above.

{\scriptsize
  
  \[ \begin{array}{ccccc} 
   \mbox{Right side of F3} &=&  \mbox{Chadya of right side of F3} & + & \mbox{Flipped Chadaka of right side of F3} \\ \\
&&
    \begin{array}{|c|c|c|c|c|c|c|c|c|c|c|c|c|c|c|c|} \hline
      8 & 9 & 8 & 9 & 8 & 9 & 8 & 9 \\ \hline
      10 & 7 & 10& 7 & 10 & 7 & 10 & 7 \\ \hline
      6 & 11 & 6 & 11 &  6 & 11 & 6 & 11 \\ \hline
      12 & 5 & 12 & 5 & 12 & 5 & 12 & 5 \\ \hline
      9 & 8 & 9 & 8 & 9 & 8 & 9 & 8 \\ \hline
      7 & 10 & 7 & 10 & 7 & 10 & 7 & 10 \\ \hline
      11 & 6 & 11 & 6 & 11 & 6 & 11 & 6 \\ \hline
      5 & 12 & 5 & 12 &  5 & 12 & 5 & 12 \\ \hline
      13 & 4 & 13 & 4 & 13 & 4 & 13 & 4 \\ \hline
      3 & 14 & 3& 14 & 3 & 14 & 3& 14 \\ \hline
      15 & 2 & 15 & 2& 15 & 2 & 15 & 2 \\ \hline
      1 & 16 & 1 & 16 & 1 & 16 & 1 & 16 \\ \hline
      4 & 13 & 4 & 13& 4 & 13 & 4 & 13 \\ \hline
      14 & 3 & 14 & 3 &    14 & 3 & 14 & 3 \\ \hline
      2 & 15 & 2 & 15& 2 & 15 & 2 & 15 \\ \hline
      16 & 1 & 16 & 1 &  16 & 1 & 16 & 1 \\ \hline
    \end{array} & + & 
    \begin{array}{|c|c|c|c|c|c|c|c|c|c|c|c|c|c|c|c|} \hline
      64 & 80 & 96 & 112 &  128 & 144 & 160 & 176 \\ \hline
      176 & 160 & 144 & 128 &  112 & 96 & 80& 64 \\ \hline
      64 & 80 & 96 & 112 &  128 & 144 & 160 & 176 \\ \hline
      176 & 160 & 144 & 128 &  112 & 96 & 80& 64 \\ \hline
      64 & 80 & 96 & 112 &  128 & 144 & 160 & 176 \\ \hline
      176 & 160 & 144 & 128 &  112 & 96 & 80& 64 \\ \hline
      64 & 80 & 96 & 112 &  128 & 144 & 160 & 176 \\ \hline
      176 & 160 & 144 & 128 &  112 & 96 & 80& 64 \\ \hline
      64 & 80 & 96 & 112 &  128 & 144 & 160 & 176 \\ \hline
      176 & 160 & 144 & 128 &  112 & 96 & 80& 64 \\ \hline
      64 & 80 & 96 & 112 &  128 & 144 & 160 & 176 \\ \hline
      176 & 160 & 144 & 128 &  112 & 96 & 80& 64 \\ \hline
      64 & 80 & 96 & 112 &  128 & 144 & 160 & 176 \\ \hline
      176 & 160 & 144 & 128 &  112 & 96 & 80& 64 \\ \hline
      64 & 80 & 96 & 112 &  128 & 144 & 160 & 176 \\ \hline
      176 & 160 & 144 & 128 &  112 & 96 & 80& 64 \\ \hline
    \end{array} 
  \end{array}
  \]
}

Putting the left and right sides together, we get the Chadya and the
Flipped Chadaka of the Franklin square. The flipped Chadaka is flipped
again to get the Chadaka of the Franklin square.  See Table
\ref{chadya16x16franklin} for the Chadya and Chadaka of F3.

We, now, describe a new Algorithm to construct Franklin squares as
superimposition of Chadya and Chadaka squares.  This algorithm, as we
have seen, is just a rewriting of Algorithm
\ref{franklincosntructalgo1}.

\begin{algo}
  \begin{enumerate}
  \item \label{step1algo2franklin} We start with placing  the numbers $1$ to  $n$
    in the $n/4$ and $n/4+1$ th columns,  using the
    following operation sequence.

    \[ \begin{array}{|c|ccc|} \hline
      \mbox{Part}  & \mbox{Bottom} & \mbox{Top} & \mbox{Middle}\\ \hline
      \mbox{Operation} &  \mbox{Up} & \mbox{Up} & \mbox{Down} \\ \hline
      \mbox{Starting Number} & 1 & 1+n/4 & 1+n/2 \\ \hline
    \end{array}
    \]

    For example, in the case of the Franklin square F3, we get
    {\scriptsize
      \[
      \begin{array}{|c|c|c|c|c|c|c|c|c|c|c|c|c|c|c|c|c|c|c|c|c|c|c|c|c|c|c|c|c|c|c|c|} \hline
        - & - & - & - & 8 & - & - & - &   - & - & - & - & - & - & - & - \\ \hline
        - & - & - & 7 & - & - & - & - &   - & - & - & - & - & - & - & -\\ \hline
        - & - & - & - & 6 & - & - & - &   - & - & - & - & - & - & - & - \\ \hline
        - & - & - & 5 & - & - & - & - &   - & - & - & - & - & - & - & -\\ \hline
        - & - & - & - & 9 & - & - & - &   - & - & - & - & - & - & - & -\\ \hline
        - & - & - & 10 & - & - & - & - &   - & - & - & - & - & - & - & - \\ \hline
        - & - & - & - & 11 & - & - & - &   - & - & - & - & - & - & - & -\\ \hline
        - & - & - & 12 & - & - & - & - &   - & - & - & - & - & - & - & -\\ \hline
        - & - & - & - & 13 & - & - & - &   - & - & - & - & - & - & - & -\\ \hline
        - & - & - & 14 & - & - & - & - &   - & - & - & - & - & - & - & -\\ \hline
        - & - & - & - & 15 & - & - & - &   - & - & - & - & - & - & - & -\\ \hline
        - & - & - & 16 & - & - & - & - &   - & - & - & - & - & - & - & - \\ \hline
        - & - & - & - & 4 & - & - & - &   - & - & - & - & - & - & - & - \\ \hline
        - & - & - & 3 & - & - & - & - &   - & - & - & - & - & - & - & -\\ \hline
        - & - & - & - & 2 & - & - & - &   - & - & - & - & - & - & - & -\\ \hline
        - & - & - & 1 & - & - & - & - &   - & - & - & - & - & - & - & -\\ \hline
      \end{array}
      \]
    }
    Observe that this is Step 1 of Table \ref{tableleftf3steps}.

  \item Let $a$ be the entry in a row in Step \ref{step1algo2franklin}. Then the row is filled with $a$
    and $n+1-a$ in alternate columns. This gives us the Chadya of the square. 
    In the case of F3, we get
    {\scriptsize
      \[
      \begin{array}{|c|c|c|c|c|c|c|c|c|c|c|c|c|c|c|c|c|c|c|c|c|c|c|c|c|c|c|c|c|c|c|c|} \hline
        8 & 9 & 8 & 9 & 8 & 9 & 8 & 9 & 8 & 9 & 8 & 9 & 8 & 9 & 8 & 9  \\ \hline
        10 & 7 & 10 & 7 & 10 & 7 & 10 & 7 & 10 & 7 & 10 & 7 & 10 & 7 & 10 & 7\\ \hline
        6 & 11 & 6 & 11 & 6 & 11 & 6 & 11 & 6 & 11 & 6 & 11 & 6 & 11 & 6 & 11 \\ \hline
        12 & 5 & 12 & 5 & 12 & 5 & 12 & 5 &  12 & 5 & 12 & 5 & 12 & 5 & 12 & 5 \\ \hline
        9 & 8 & 9 & 8 & 9 & 8 & 9 & 8 & 9 & 8 & 9 & 8 & 9 & 8 & 9 & 8 \\ \hline
        7 & 10 & 7 & 10 & 7 & 10 & 7 & 10 &  7 & 10 & 7 & 10 & 7 & 10 & 7 & 10 \\ \hline
        11 & 6 & 11 & 6 & 11 & 6 & 11 & 6 & 11 & 6 & 11 & 6 & 11 & 6 & 11 & 6 \\ \hline
        5 & 12 & 5 & 12 & 5 & 12 & 5 & 12 &  5 & 12 & 5 & 12 & 5 & 12 & 5 & 12 \\ \hline
        13 & 4 & 13 & 4& 13 & 4 & 13 & 4 & 13 & 4 & 13 & 4& 13 & 4 & 13 & 4  \\ \hline
        3 & 14 & 3& 14 & 3 & 14 & 3 & 14 & 3 & 14 & 3& 14 & 3 & 14 & 3 & 14 \\ \hline
        15 & 2 & 15 & 2 & 15 & 2 & 15 & 2 & 15 & 2 & 15 & 2 & 15 & 2 & 15 & 2 \\ \hline
        1 & 16 & 1 & 16 & 1 & 16 & 1 & 16 &  1 & 16 & 1 & 16 & 1 & 16 & 1 & 16 \\ \hline
        4 & 13 & 4 & 13& 4 & 13 & 4 & 13 &   4 & 13 & 4 & 13& 4 & 13 & 4 & 13 \\ \hline
        14 & 3 & 14 & 3 & 14 & 3 & 14& 3 & 14 & 3 & 14 & 3 & 14 & 3 & 14& 3\\ \hline
        2 & 15 & 2 & 15 & 2 & 15 & 2 & 15 & 2 & 15 & 2 & 15 & 2 & 15 & 2 & 15 \\ \hline
        16 & 1 &16 & 1 & 16 & 1 & 16 & 1 &  16 & 1 &16 & 1 & 16 & 1 & 16 & 1  \\ \hline
      \end{array}      
      \]
    }

  \item To construct the Chadaka, we start with the left side of the
    Franklin square.  We place zeroes in the two middle rows, in the
    same sequence, as in Step \ref{partialstep} of Algorithm
    \ref{franklincosntructalgo1}.  This places zeroes, alternating
    between the middle two columns, starting with the $n/4+1$ th
    column, in a downward direction.

    In the case of F3, this step will produce

    {\scriptsize
      \[
      \begin{array}{|c|c|c|c|c|c|c|c|c|c|c|c|c|c|c|c|} \hline
        - & - & - & - & 0 & - & - & -  \\ \hline
        - & - & - & 0 & - & - & - & - \\ \hline
        - & - & - & - & 0 & - & - & - \\ \hline
        - & - & - & 0 & - & - & - & - \\ \hline
        - & - & - & - & 0 & - & - & - \\ \hline
        - & - & - & 0 & - & - & - & - \\ \hline
        - & - & - & - & 0 & - & - & - \\ \hline
        - & - & - & 0 & - & - & - & - \\ \hline
        - & - & - & - & 0 & - & - & - \\ \hline
        - & - & - & 0 & - & - & - & - \\ \hline
        - & - & - & - & 0 & - & - & - \\ \hline
        - & - & - & 0 & - & - & - & -  \\ \hline
        - & - & - & - & 0 & - & - & - \\ \hline
        - & - & - & 0 & - & - & - & - \\ \hline
        - & - & - & - & 0 & - & - & - \\ \hline
        - & - & - & 0 & - & - & - & - \\ \hline
      \end{array} 
      \]
    }

  \item  Next, we fill the empty cells in the middle two columns with $n^2-n$.

    Filling the empty cells with $256-16 =240$ for F3, we get
    {\scriptsize
      \[
      \begin{array}{|c|c|c|c|c|c|c|c|c|c|c|c|c|c|c|c|} \hline
        - & - & - & 240 & 0 & - & - & -  \\ \hline
        - & - & - & 0 & 240 & - & - & - \\ \hline
        - & - & - & 240 & 0 & - & - & - \\ \hline
        - & - & - & 0 & 240 & - & - & - \\ \hline
        - & - & - & 240 & 0 & - & - & - \\ \hline
        - & - & - & 0 & 240 & - & - & - \\ \hline
        - & - & - & 240 & 0 & - & - & - \\ \hline
        - & - & - & 0 & 240 & - & - & - \\ \hline
        - & - & - & 240 & 0 & - & - & - \\ \hline
        - & - & - & 0 & 240 & - & - & - \\ \hline
        - & - & - & 240 & 0 & - & - & - \\ \hline
        - & - & - & 0 & 240 & - & - & -  \\ \hline
        - & - & - & 240 & 0 & - & - & - \\ \hline
        - & - & - & 0 & 240 & - & - & - \\ \hline
        - & - & - & 240 & 0 & - & - & - \\ \hline
        - & - & - & 0 & 240 & - & - & - \\ \hline
      \end{array}
      \]
    }

  \item We place $n \times i $ where $i=1, \dots, n/4-1$, to the right or left of
    zero depending on which side of zero is empty. Finally, we place
    $n^2-(i+1)n$, where $i=1, \dots, n/4-1$ to the right or left of
    $n^2-n$, depending on which side of $n^2-n$ is empty. This gives us
    the flipped Chadaka of the left side of the Franklin square.

    In case of F3, we get

    {\scriptsize
      \[
      \begin{array}{|c|c|c|c|c|c|c|c|c|c|c|c|c|c|c|c|} \hline
        192 & 208 & 224& 240 & 0 & 16 & 32 & 48 \\ \hline
        48 & 32 & 16& 0 & 240 & 224 & 208 & 192 \\ \hline
        192 & 208 & 224& 240 & 0 & 16 & 32 & 48 \\ \hline
        48 & 32 & 16 & 0 &  240 & 224 & 208 & 192 \\ \hline
        
        192 & 208 & 224& 240 & 0 & 16 & 32 & 48 \\ \hline
        48 & 32 & 16 & 0 &  240  & 224 & 208 & 192 \\ \hline
        192 & 208 & 224& 240 &  0 & 16 & 32 & 48 \\ \hline
        48 & 32 & 16 & 0 &240 & 224 & 208 & 192 \\ \hline
        192 & 208 & 224& 240 & 0 & 16 & 32 & 48 \\ \hline
        48 & 32 & 16 & 0 &  240  & 224 & 208 & 192 \\ \hline
        192 & 208 & 224& 240 &  0 & 16 & 32 & 48 \\ \hline
        48 & 32 & 16 & 0 &240 & 224 & 208 & 192 \\ \hline

        192 & 208 & 224& 240 & 0 & 16 & 32 & 48 \\ \hline
        48 & 32 & 16 & 0 &  240  & 224 & 208 & 192 \\ \hline
        192 & 208 & 224& 240 &  0 & 16 & 32 & 48 \\ \hline
        48 & 32 & 16 & 0 &240 & 224 & 208 & 192 \\ \hline
      \end{array} 
      \]
    }

  \item The flipped Chadaka of the right side of the Franklin
    square from the right side is constructed by applying  Step
    \ref{constructrightfromleftstep} of Algorithm
    \ref{franklincosntructalgo1}.  Swap the first $n/4$ columns with the
    last $n/4$ columns of the Chadaka of the left side to build the
    Chadaka of the right side of the Franklin square. For odd rows of
    this modified square, $n^2/4$ is added to every entry in the first
    half of the row, and $n^2/4$ is subtracted from every entry of the
    second half. For even rows, $n^2/4$ is subtracted from every entry
    in the first half of the row, and $n^2/4$ is added to every entry of
    the second half.  This gives us the flipped Chadaka of the  right side of the Franklin
    square.

    Thus, we get the  flipped Chadaka  of the right side of the Franklin square F3, as shown below.

    {\scriptsize
      \[ \begin{array}{ccccccccccccc}
        \begin{array}{|c|c|c|c|c|c|c|c|c|c|c|c|c|c|c|c|} \hline
          0 & 16 & 32 & 48 &  192 & 208 & 224& 240  \\ \hline
          240 & 224 & 208 & 192 & 48 & 32 & 16& 0   \\ \hline
          0 & 16 & 32 & 48 & 192 & 208 & 224& 240  \\ \hline
          240 & 224 & 208 & 192 &  48 & 32 & 16 & 0  \\ \hline
          0 & 16 & 32 & 48 & 192 & 208 & 224& 240  \\ \hline
          240  & 224 & 208 & 192 &  48 & 32 & 16 & 0 \\ \hline
          0 & 16 & 32 & 48 & 192 & 208 & 224& 240 \\ \hline
          240 & 224 & 208 & 192 & 48 & 32 & 16 & 0  \\ \hline
          0 & 16 & 32 & 48 & 192 & 208 & 224& 240   \\ \hline
          240  & 224 & 208 & 192 & 48 & 32 & 16 & 0  \\ \hline
          0 & 16 & 32 & 48 & 192 & 208 & 224& 240  \\ \hline
          240 & 224 & 208 & 192 & 48 & 32 & 16 & 0  \\ \hline
          0 & 16 & 32 & 48 & 192 & 208 & 224& 240  \\ \hline
          240  & 224 & 208 & 192 &  48 & 32 & 16 & 0  \\ \hline
          0 & 16 & 32 & 48 & 192 & 208 & 224& 240  \\ \hline
          240 & 224 & 208 & 192 & 48 & 32 & 16 & 0  \\ \hline
        \end{array}  & + &
        \begin{array}{|c|c|c|c|c|c|c|c|c|c|c|c|c|c|c|c|} \hline
          64 & 64 & 64 & 64 & -64 & -64 & -64 & -64 \\ \hline
          -64 & -64 & -64 & -64 & 64 & 64 & 64 & 64   \\ \hline
          64 & 64 & 64 & 64 & -64 & -64 & -64 & -64 \\ \hline
          -64 & -64 & -64 & -64 & 64 & 64 & 64 & 64   \\ \hline
          64 & 64 & 64 & 64 & -64 & -64 & -64 & -64 \\ \hline
          -64 & -64 & -64 & -64 & 64 & 64 & 64 & 64   \\ \hline
          64 & 64 & 64 & 64 & -64 & -64 & -64 & -64 \\ \hline
          -64 & -64 & -64 & -64 & 64 & 64 & 64 & 64   \\ \hline
          64 & 64 & 64 & 64 & -64 & -64 & -64 & -64 \\ \hline
          -64 & -64 & -64 & -64 & 64 & 64 & 64 & 64   \\ \hline
          64 & 64 & 64 & 64 & -64 & -64 & -64 & -64 \\ \hline
          -64 & -64 & -64 & -64 & 64 & 64 & 64 & 64   \\ \hline
          64 & 64 & 64 & 64 & -64 & -64 & -64 & -64 \\ \hline
          -64 & -64 & -64 & -64 & 64 & 64 & 64 & 64   \\ \hline
          64 & 64 & 64 & 64 & -64 & -64 & -64 & -64 \\ \hline
          -64 & -64 & -64 & -64 & 64 & 64 & 64 & 64   \\ \hline
        \end{array}
      \end{array}
      \]
    }
    We put the flipped Chadakas of the  two sides to get the flipped Chadaka of F3.

    {\scriptsize

      \[ \begin{array}{ccccc} 
        \mbox{Flipped Chadaka of the Franklin square F3} \\
        \begin{array}{|c|c|c|c|c|c|c|c|c|c|c|c|c|c|c|c|} \hline
          192 & 208 & 224& 240 & 0 & 16 & 32 & 48 \\ \hline
          48 & 32 & 16& 0 & 240 & 224 & 208 & 192 \\ \hline
          192 & 208 & 224& 240 & 0 & 16 & 32 & 48 \\ \hline
          48 & 32 & 16 & 0 &  240 & 224 & 208 & 192 \\ \hline
          
          192 & 208 & 224& 240 & 0 & 16 & 32 & 48 \\ \hline
          48 & 32 & 16 & 0 &  240  & 224 & 208 & 192 \\ \hline
          192 & 208 & 224& 240 &  0 & 16 & 32 & 48 \\ \hline
          48 & 32 & 16 & 0 &240 & 224 & 208 & 192 \\ \hline
          192 & 208 & 224& 240 & 0 & 16 & 32 & 48 \\ \hline
          48 & 32 & 16 & 0 &  240  & 224 & 208 & 192 \\ \hline
          192 & 208 & 224& 240 &  0 & 16 & 32 & 48 \\ \hline
          48 & 32 & 16 & 0 &240 & 224 & 208 & 192 \\ \hline

          192 & 208 & 224& 240 & 0 & 16 & 32 & 48 \\ \hline
          48 & 32 & 16 & 0 &  240  & 224 & 208 & 192 \\ \hline
          192 & 208 & 224& 240 &  0 & 16 & 32 & 48 \\ \hline
          48 & 32 & 16 & 0 &240 & 224 & 208 & 192 \\ \hline
        \end{array} 
        \begin{array}{|c|c|c|c|c|c|c|c|c|c|c|c|c|c|c|c|} \hline
          64 & 80 & 96 & 112 &  128 & 144 & 160 & 176 \\ \hline
          176 & 160 & 144 & 128 &  112 & 96 & 80& 64 \\ \hline
          64 & 80 & 96 & 112 &  128 & 144 & 160 & 176 \\ \hline
          176 & 160 & 144 & 128 &  112 & 96 & 80& 64 \\ \hline
          64 & 80 & 96 & 112 &  128 & 144 & 160 & 176 \\ \hline
          176 & 160 & 144 & 128 &  112 & 96 & 80& 64 \\ \hline
          64 & 80 & 96 & 112 &  128 & 144 & 160 & 176 \\ \hline
          176 & 160 & 144 & 128 &  112 & 96 & 80& 64 \\ \hline
          64 & 80 & 96 & 112 &  128 & 144 & 160 & 176 \\ \hline
          176 & 160 & 144 & 128 &  112 & 96 & 80& 64 \\ \hline
          64 & 80 & 96 & 112 &  128 & 144 & 160 & 176 \\ \hline
          176 & 160 & 144 & 128 &  112 & 96 & 80& 64 \\ \hline
          64 & 80 & 96 & 112 &  128 & 144 & 160 & 176 \\ \hline
          176 & 160 & 144 & 128 &  112 & 96 & 80& 64 \\ \hline
          64 & 80 & 96 & 112 &  128 & 144 & 160 & 176 \\ \hline
          176 & 160 & 144 & 128 &  112 & 96 & 80& 64 \\ \hline
        \end{array} 

      \end{array}
      \]
    }

    Because the Chadya of the left side and the right side are the same, we can also construct the two sides separately.
    For example, the left side of Franklin square F1 is constructed as follows.
    {\scriptsize   
      \[
      \begin{array}{ccccccccccccc}
        \mbox{Chadya} && \mbox{Flipped Chadaka} \\
        \begin{array}{|c|c|c|c|c|c|c|c|} \hline
          4 & 5 & 4 & 5  \\ \hline
          6& 3&6&3 \\ \hline
          5 & 4 & 5 & 4  \\ \hline
          3&6&3&6\\ \hline
          7& 2&7 &2 \\ \hline
          1 & 8 &1 & 8   \\ \hline
          2& 7& 2&7  \\ \hline
          8 &1 & 8 &1  \\ \hline
        \end{array} &+&
        \begin{array}{|c|c|c|c|c|c|c|c|} \hline
          
          48 & 56 & 0 &  8 \\ \hline
          8 & 0 & 56 & 48 \\ \hline
          48 & 56 & 0 &  8 \\ \hline
          8 & 0 & 56 & 48 \\ \hline
          48 & 56 & 0 &  8 \\ \hline
          8 & 0 & 56 & 48 \\ \hline
          48 & 56 & 0 &  8 \\ \hline
          8 & 0 & 56 & 48 \\ \hline
          
        \end{array}
      \end{array}
      \]
    }

    The right side of Franklin square F1 is derived below.
    {\scriptsize    
      \[
      \begin{array}{ccccccccccccc}
        \mbox{Chadya} && \mbox{Flipped Chadaka} \\ 
        \begin{array}{|c|c|c|c|c|c|c|c|} \hline
          4 & 5 & 4 & 5  \\ \hline
          6& 3&6&3 \\ \hline
          5 & 4 & 5 & 4  \\ \hline
          3&6&3&6\\ \hline
          7& 2&7 &2 \\ \hline
          1 & 8 &1 & 8   \\ \hline
          2& 7& 2&7  \\ \hline
          8 &1 & 8 &1  \\ \hline
        \end{array} &+& \left  \{ \hspace{.025in}
        \begin{array}{|c|c|c|c|c|c|c|c|} \hline
          
          0 &  8 & 48 & 56  \\ \hline
          56 & 48 & 8 & 0   \\ \hline
          0 &  8 & 48 & 56  \\ \hline
          56 & 48 & 8 & 0   \\ \hline
          0 &  8 & 48 & 56  \\ \hline
          56 & 48 & 8 & 0   \\ \hline
          0 &  8 & 48 & 56  \\ \hline
          56 & 48 & 8 & 0   \\ \hline
          
        \end{array} +
        \begin{array}{|c|c|c|c|c|c|c|c|} \hline
          16 & 16 & -16 & -16 \\ \hline
          -16 & -16 & 16 & 16 \\ \hline
          16 & 16 & -16 & -16 \\ \hline
          -16 & -16 & 16 & 16 \\ \hline
          16 & 16 & -16 & -16 \\ \hline
          -16 & -16 & 16 & 16 \\ \hline
          16 & 16 & -16 & -16 \\ \hline
          -16 & -16 & 16 & 16 \\ \hline
        \end{array} \hspace{.025in} \right \}
      \end{array}
      \]
    }

  \end{enumerate}

\end{algo}
 See Tables \ref{franklin8chadya} and \ref{chadya16x16franklin} for the Chadya and Chadaka of F1 and F3, respectively.

\begin{lemma} \label{franklinpandiaohlemma}
  Consider an $n \times n$ Franklin square. Let the magic sum be denoted by $M$, then pandiagonals add to $M \pm n^2/2$.
\end{lemma}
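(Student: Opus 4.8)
The plan is to use the decomposition of the Franklin square as a superimposition $A = C + D$ of its Chadya $C$ and flipped Chadaka $D$, just established above. A pandiagonal sum is additive over an entrywise matrix sum, so for any of the $2n$ broken diagonals $P$ (right or left) I would write $\Sigma(P) = \Sigma_C(P) + \Sigma_D(P)$ and evaluate the two pieces separately, exploiting the periodicity of the summands: $C$ has period two in the columns (its odd columns all equal a fixed vector $(b_1,\dots,b_n)$ and its even columns all equal $(n+1-b_1,\dots,n+1-b_n)$), while every entry of $D$ is a multiple of $n$ and $D$ has period two in the rows (all odd rows equal a fixed vector $n\kappa$, all even rows a fixed vector $n\lambda$, with $\kappa,\lambda$ permutations of $0,1,\dots,n-1$). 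I would parametrize $P$ by the column $c(i)$ it meets in row $i$, where $c(i+1) = c(i) \pm 1 \pmod n$; the key elementary observation is that the parity of $c(i)$ then alternates with $i$.

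First I would compute $\Sigma_C(P)$. Because the parity of $c(i)$ alternates with $i$, the pandiagonal picks $b_i$ from $C$ on the rows of one parity and $n+1-b_i$ on the rows of the other, so $\Sigma_C(P) = \tfrac n2(n+1) \pm \left( \sum_{i \text{ odd}} b_i - \sum_{i \text{ even}} b_i \right)$. The Up and Down operations of Algorithm \ref{franklincosntructalgo1} place consecutive integers in consecutive rows within each part, and because each part has an even number of rows ($n/4$ or $n/2$) a short parity count shows that the alternating sum $\sum_i (-1)^i b_i$ vanishes; hence $\Sigma_C(P) = \tfrac n2(n+1)$ for every pandiagonal, with no dependence on the slope or the starting column.

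Next I would compute $\Sigma_D(P)$. Since $D$ has row period two and $c(i)$ sweeps out, on the odd rows, every column of one fixed parity exactly once and on the even rows every column of the other parity, I would obtain $\Sigma_D(P) = \sum_{c \text{ of parity } p} n\kappa_c + \sum_{c \text{ of parity } 1-p} n\lambda_c$, where $p$ is the parity of $c(1)$. Using $\sum_c n\kappa_c = \tfrac{n^2(n-1)}2$, this rearranges to $\Sigma_D(P) = \tfrac{n^2(n-1)}2 + n\sum_{c \text{ of parity } 1-p}(\lambda_c - \kappa_c)$. The patterns $\kappa$ and $\lambda$ differ only by reversing each half of the row, and $\kappa$ places consecutive integers in adjacent column pairs, so $n\sum_{c \text{ of parity } 1-p}(\lambda_c - \kappa_c) = \pm \tfrac{n^2}2$; thus $\Sigma_D(P) = \tfrac{n^2(n-1)}2 \pm \tfrac{n^2}2$. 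Adding the two contributions gives $\Sigma(P) = \tfrac n2(n+1) + \tfrac{n^2(n-1)}2 \pm \tfrac{n^2}2 = \tfrac n2(n^2+1) \pm \tfrac{n^2}2 = M \pm \tfrac{n^2}2$, as claimed. No separate continuity argument is needed, since pandiagonals are wrap-around diagonals by definition and $C, D$ are built by periodic repetition.

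The hard part will be the flipped-Chadaka computation, namely pinning the residual $n\sum_{c \text{ of parity } 1-p}(\lambda_c - \kappa_c)$ to exactly $\pm n^2/2$ and tracking its sign across the four cases (right or left slope, even or odd starting column). This is precisely the step with no analogue in the bend-diagonal proof: a pandiagonal meets each row and each column exactly once, so no two of its entries share a column and the clean same-column cancellation furnished by Part \ref{colequi} of Lemma \ref{prooffranklinalgolemma} is unavailable, and the surviving $\pm n^2/2$ is exactly this lost cancellation. I would discharge it by unpacking the Ganapankti ordering of $\kappa$ and $\lambda$ and checking that the even-indexed and odd-indexed column entries of $\kappa$ differ in total by $n/2$, and I would cross-check the bookkeeping against F1, where the down-right pandiagonal through the top-left corner sums to $228 = M - n^2/2$ while a down-left pandiagonal sums to $292 = M + n^2/2$.
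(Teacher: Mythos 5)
Your proposal follows essentially the same route as the paper's proof: the paper also decomposes the square into Chadya plus flipped Chadaka and evaluates the two layers separately along a pandiagonal, organizing the computation through the paired entries $s_{i,j}=a_{i,j}+a_{\frac{n}{2}+i,\,\frac{n}{2}+j}$ and tabulated paired sums $y_{i,j}$ (Chadya layer) and $d_{i,j}$ (Chadaka layer), and it arrives at exactly your two totals: $\frac{n^2}{2}+\frac{n}{2}$ for the Chadya contribution, independent of the pandiagonal, and $\frac{n^3}{2}-n^2$ or $\frac{n^3}{2}$ for the Chadaka contribution according to the parity of the starting column. Your deferred ``hard part'' is in fact sketched correctly: in the construction $\lambda$ really is $\kappa$ with each half reversed, $\kappa$ really does consist of consecutive integers placed in adjacent column pairs, these two facts do pin the residual to $\pm n^2/2$, and your F1 cross-checks ($228=M-n^2/2$ for the diagonal through the corner, $292=M+n^2/2$ for the antidiagonal) are right.

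The genuine gap is in the step you treated as easy. You assert that $\sum_i(-1)^i b_i=0$ ``because each part has an even number of rows,'' by ``a short parity count.'' That reasoning is invalid: the alternating sum does \emph{not} vanish part by part. Within one part the Up/Down operation alternates the receiving column, so the rows fed into $C_r$ all carry one sign in the alternating sum and the rows fed into $C_l$ the other; for a part of $r$ rows with starting number $A$ the contribution works out to $\pm\frac{r}{2}\,(2A+r-n-2)$, which is generically nonzero. Concretely, for F3 (where $b$ is the first column of the Chadya) the top, middle, and bottom parts contribute $-8$, $+32$, and $-24$: the total vanishes only because the starting numbers are specifically $1$, $1+n/4$, $1+n/2$, so that $\frac{n^2}{8}=\frac{n^2}{32}+\frac{3n^2}{32}$, and because the middle part's sign opposes the other two. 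Had the construction assigned the starting numbers to the parts differently, the sum would not be zero, so no parity argument can establish it. To repair this you must carry out the same construction-dependent bookkeeping that the paper packages into its table of $y_{i,j}$ values (which it reads off ``by construction''), i.e.\ the Chadya half of your argument needs exactly the kind of explicit verification you reserved for the Chadaka half.
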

\begin{proof}

  We first look at left pandiagonals (see Figure \ref{pandiagonas}). 
  Since every pandiagonal starts from the first row, let $P_{1,c}$
  denote a pandiagonal that starts from column $c$.  Let $a_{i,j}$
  denote the entries of a $n \times n$ Franklin square.

  Let $a_{i,j}$ belong to a pandiagonal and let $1 \leq i \leq n/2$.
  If $1 \leq j \leq n/2$, then $a_{n/2+i, n/2+j}$ also belong to the
  pandiagonal.  On the other hand if $j > n/2$, then $a_{n/2+i,
    j-n/2}$ belongs to the pandiagonal. Thus, every pandiagonal
  $P_{1,c}$ is made up of $n/2$ paired entries.

  Let $1 \leq i \leq n/2$, define $s_{i,j}$ to be 
  \[s_{i,j} = \left \{ 
  \begin{array}{lllll}
    a_{i,j} + a_{\frac{n}{2} + i, \frac{n}{2}+j} & \mbox{ if } 1 \leq j \leq n/2 \\ \\
    a_{i,j} + a_{i,j-\frac{n}{2}} & \mbox{ if } n/2 < j \leq n.
  \end{array}
  \right .
  \]

  Observe that $s_{i,j} = s_{i, j+\frac{n}{2}}$. Therefore, by the
  continuity property of pandiagonals, it is sufficient to consider
  $s_{i,j}$, where $1 \leq i,j \leq \frac{n}{2}$, to derive pandiagonal
  sums. Each pandiagonal sum contains $n/2$ such paired sums.  Let
  $ch_{i,j}$ and $cd_{i,j}$ denote the entries of the Chadya and
  Chadaka, respectively. Let $y_{i,j} = ch_{i,j}+ ch_{\frac{n}{2} + i,
    \frac{n}{2}+j} $ and $d_{i,j} = cd_{i,j}+ cd_{\frac{n}{2} + i,
    \frac{n}{2}+j}$.  Then $s_{i,j} = y_{i,j}+d_{i,j}$.

  By construction, for $1 \leq i \leq n/4$ and $1 \leq j \leq n/2$,

  \[ y_{i,j} = \left \{ \begin{array}{lllllllllll}  
    \frac{5n}{4} + 1 & \mbox{ if $i$ is odd and $j$ is odd.} \\ \\
    \frac{3n}{4} + 1 & \mbox{ if $i$ is odd and $j$ is even.} \\ \\
    \frac{3n}{4} + 1 & \mbox{ if $i$ is even and $j$ is odd.} \\ \\
    \frac{5n}{4} + 1 & \mbox{ if $i$ is even and $j$ is even.} \\ \\
  \end{array}
  \right.
  \]

  Let $n/4 < i \leq n/2$ and $1 \leq j \leq n/2$. Then 
  \[ y_{i,j} = \left \{ \begin{array}{lllllllllll}  
    \frac{3n}{4} + 1 & \mbox{ if $i$ is odd and $j$ is odd.} \\ \\
    \frac{5n}{4} + 1 & \mbox{ if $i$ is odd and $j$ is even.} \\ \\
    \frac{5n}{4} + 1 & \mbox{ if $i$ is even and $j$ is odd.} \\ \\
    \frac{3n}{4} + 1 & \mbox{ if $i$ is even and $j$ is even.} \\ \\
  \end{array}
  \right.
  \]

  For example, in the case of F1, 
  {\scriptsize   
    \[ [y_{i,j}] = 
    \begin{array}{ccccccccccccc}
      \begin{array}{|c|c|c|c|c|c|c|c|} \hline
        4 & 5 & 4 & 5  \\ \hline
        6& 3&6&3 \\ \hline
        5 & 4 & 5 & 4  \\ \hline
        3&6&3&6\\ \hline
      \end{array} &+&
      \begin{array}{|c|c|c|c|c|c|c|c|} \hline
        7& 2&7 &2 \\ \hline
        1 & 8 &1 & 8   \\ \hline
        2& 7& 2&7  \\ \hline
        8 &1 & 8 &1  \\ \hline
      \end{array} &=&
      \begin{array}{|c|c|c|c|c|c|c|c|} \hline
        11 & 7 &  11 & 7 \\ \hline
        7 & 11 & 7 & 11 \\ \hline
        7 & 11 & 7 & 11 \\ \hline
        11 & 7 &  11 & 7 \\ \hline
      \end{array} = 
      \begin{array}{|c|c|c|c|c|c|c|c|c|c|c|c|c|c|c|c|} \hline
        \frac{5n}{4} + 1 & \frac{3n}{4} + 1 &  \frac{5n}{4} + 1 & \frac{3n}{4} + 1  \\ \hline
        \frac{3n}{4} + 1 &  \frac{5n}{4} + 1 & \frac{3n}{4} + 1 & \frac{5n}{4} + 1  \\ \hline
        \frac{3n}{4} + 1 &  \frac{5n}{4} + 1 & \frac{3n}{4} + 1 & \frac{5n}{4} + 1  \\ \hline
        \frac{5n}{4} + 1 & \frac{3n}{4} + 1 &  \frac{5n}{4} + 1 & \frac{3n}{4} + 1  \\ \hline
      \end{array}
    \end{array}
    \]
  }

  Check that,  in the case of the Franklin square F3,

  {\scriptsize
    \[ [y_{i,j}] = 
    \begin{array}{|c|c|c|c|c|c|c|c|c|c|c|c|c|c|c|c|} \hline
      \frac{5n}{4} + 1 & \frac{3n}{4} + 1 &  \frac{5n}{4} + 1 & \frac{3n}{4} + 1 & \frac{5n}{4} + 1 & \frac{3n}{4} + 1 &  \frac{5n}{4} + 1 & \frac{3n}{4} + 1 \\ \hline
      \frac{3n}{4} + 1 &  \frac{5n}{4} + 1 & \frac{3n}{4} + 1 & \frac{5n}{4} + 1 & \frac{3n}{4} + 1 &  \frac{5n}{4} + 1 & \frac{3n}{4} + 1  & \frac{5n}{4} + 1 \\ \hline
      \frac{5n}{4} + 1 & \frac{3n}{4} + 1 &  \frac{5n}{4} + 1 & \frac{3n}{4} + 1 & \frac{5n}{4} + 1 & \frac{3n}{4} + 1 &  \frac{5n}{4} + 1 & \frac{3n}{4} + 1 \\ \hline
      \frac{3n}{4} + 1 &  \frac{5n}{4} + 1 & \frac{3n}{4} + 1 & \frac{5n}{4} + 1 & \frac{3n}{4} + 1 &  \frac{5n}{4} + 1 & \frac{3n}{4} + 1  & \frac{5n}{4} + 1 \\ \hline
      \frac{3n}{4} + 1 &  \frac{5n}{4} + 1 & \frac{3n}{4} + 1 & \frac{5n}{4} + 1 & \frac{3n}{4} + 1 &  \frac{5n}{4} + 1 & \frac{3n}{4} + 1  & \frac{5n}{4} + 1 \\ \hline
      \frac{5n}{4} + 1 & \frac{3n}{4} + 1 &  \frac{5n}{4} + 1 & \frac{3n}{4} + 1 & \frac{5n}{4} + 1 & \frac{3n}{4} + 1 &  \frac{5n}{4} + 1 & \frac{3n}{4} + 1 \\ \hline
      \frac{3n}{4} + 1 &  \frac{5n}{4} + 1 & \frac{3n}{4} + 1 & \frac{5n}{4} + 1 & \frac{3n}{4} + 1 &  \frac{5n}{4} + 1 & \frac{3n}{4} + 1  & \frac{5n}{4} + 1 \\ \hline
      \frac{5n}{4} + 1 & \frac{3n}{4} + 1 &  \frac{5n}{4} + 1 & \frac{3n}{4} + 1 & \frac{5n}{4} + 1 & \frac{3n}{4} + 1 &  \frac{5n}{4} + 1 & \frac{3n}{4} + 1 \\ \hline
    \end{array}
    \]
  }
  Consequently, when we add all the $y_{i,j}$ along any left pandiagonal, we always get the sum to be 
  \[
  \frac{n}{2} \left \{ \frac{5n}{4} + 1 +  \frac{3n}{4} + 1  \right \} = \frac{n^2}{2} + \frac{n}{2}.
  \]

  Let $1 \leq j \leq \frac{n}{4}$.  When $i$ is odd, we get
  \[ \begin{array}{lllllllllll}  
    d_{i,j} =   n^2 + 2(j-1)n, \\ 
    d_{i,\frac{n}{4}+j} = n^2 - \left ( \frac{n}{2} - 2(j-1) \right )n.
  \end{array}
  \]

  When $i$ is even, we get
  \[ \begin{array}{lllllllllll}  
    d_{i,j} =   n^2 - 2jn, \\ 
    d_{i,\frac{n}{4}+j} = n^2 + \left ( \frac{n}{2} - 2(j-1) \right )n. 
  \end{array}
  \]

  For example, in the case of F3, we get

  {\scriptsize
    
    \[ \begin{array}{ccccccccccccc}
      [d_{i,j}]=  \\ \\
      
      \begin{array}{|c|c|c|c|c|c|c|c|c|c|c|c|c|c|c|c|} \hline
        192 & 208 & 224& 240 & 0 & 16 & 32 & 48 \\ \hline
        48 & 32 & 16& 0 & 240 & 224 & 208 & 192 \\ \hline
        192 & 208 & 224& 240 & 0 & 16 & 32 & 48 \\ \hline
        48 & 32 & 16 & 0 &  240 & 224 & 208 & 192 \\ \hline
        
        192 & 208 & 224& 240 & 0 & 16 & 32 & 48 \\ \hline
        48 & 32 & 16 & 0 &  240  & 224 & 208 & 192 \\ \hline
        192 & 208 & 224& 240 &  0 & 16 & 32 & 48 \\ \hline
        48 & 32 & 16 & 0 &240 & 224 & 208 & 192 \\ \hline
      \end{array} 
      +
      \begin{array}{|c|c|c|c|c|c|c|c|c|c|c|c|c|c|c|c|} \hline
        0 & 16 & 32 & 48 &  192 & 208 & 224& 240 \\ \hline
        240 & 224 & 208 & 192 &  48 & 32 & 16& 0 \\ \hline
        0 & 16 & 32 & 48 &  192 & 208 & 224& 240 \\ \hline
        240 & 224 & 208 & 192 &  48 & 32 & 16& 0 \\ \hline
        
        0 & 16 & 32 & 48 &  192 & 208 & 224& 240 \\ \hline
        240 & 224 & 208 & 192 &  48 & 32 & 16& 0 \\ \hline
        0 & 16 & 32 & 48 &  192 & 208 & 224& 240 \\ \hline
        240 & 224 & 208 & 192 &  48 & 32 & 16& 0 \\ \hline
        
      \end{array}+
      \begin{array}{|c|c|c|c|c|c|c|c|c|c|c|c|c|c|c|c|} \hline
        64 & 64 & 64 & 64 & -64 & -64 & -64 & -64 \\ \hline
        -64 & -64 & -64 & -64 & 64 & 64 & 64 & 64   \\ \hline
        64 & 64 & 64 & 64 & -64 & -64 & -64 & -64 \\ \hline
        -64 & -64 & -64 & -64 & 64 & 64 & 64 & 64   \\ \hline
        64 & 64 & 64 & 64 & -64 & -64 & -64 & -64 \\ \hline
        -64 & -64 & -64 & -64 & 64 & 64 & 64 & 64   \\ \hline
        64 & 64 & 64 & 64 & -64 & -64 & -64 & -64 \\ \hline
        -64 & -64 & -64 & -64 & 64 & 64 & 64 & 64   \\ \hline
      \end{array}

      \\ \\
      = 
      \begin{array}{|c|c|c|c|c|c|c|c|c|c|c|c|c|c|c|c|} \hline
        256 & 288 & 320 & 352 & 128 & 160 & 192 & 224 \\ \hline
        224 & 192 & 160 & 128 & 352 & 320 & 288 & 256 \\ \hline
        256 & 288 & 320 & 352 & 128 & 160 & 192 & 224 \\ \hline
        224 & 192 & 160 & 128 & 352 & 320 & 288 & 256 \\ \hline
        256 & 288 & 320 & 352 & 128 & 160 & 192 & 224 \\ \hline
        224 & 192 & 160 & 128 & 352 & 320 & 288 & 256 \\ \hline
        256 & 288 & 320 & 352 & 128 & 160 & 192 & 224 \\ \hline
        224 & 192 & 160 & 128 & 352 & 320 & 288 & 256 \\ \hline
        
      \end{array}

      =  \begin{array}{|c|c|c|c|c|c|c|c|c|c|c|c|c|c|c|c|} \hline
        n^2 & n^2+2n & n^2+4n & n^2+6n  & n^2-8n & n^2-6n & n^2-4n & n^2 - 2n \\ \hline
        n^2 -2n & n^2-4n &  n^2-6n & n^2-8n &  n^2+6n & n^2+4n &  n^2+2n & n^2 \\ \hline
        n^2 & n^2+2n & n^2+4n & n^2+6n  & n^2-8n & n^2-6n & n^2-4n & n^2 - 2n \\ \hline
        n^2 -2n & n^2-4n &  n^2-6n & n^2-8n &  n^2+6n & n^2+4n &  n^2+2n & n^2 \\ \hline
        n^2 & n^2+2n & n^2+4n & n^2+6n  & n^2-8n & n^2-6n & n^2-4n & n^2 - 2n \\ \hline
        n^2 -2n & n^2-4n &  n^2-6n & n^2-8n &  n^2+6n & n^2+4n &  n^2+2n & n^2 \\ \hline
        n^2 & n^2+2n & n^2+4n & n^2+6n  & n^2-8n & n^2-6n & n^2-4n & n^2 - 2n \\ \hline
        n^2 -2n & n^2-4n &  n^2-6n & n^2-8n &  n^2+6n & n^2+4n &  n^2+2n & n^2 \\ \hline
        
      \end{array}

    \end{array}
    \]
  }

  Check that in the case of F1, we get

  {\scriptsize
    \[ [d_{i,j}] = 
    \begin{array}{|c|c|c|c|c|c|c|c|c|c|c|c|c|c|c|c|} \hline
      n^2 & n^2+2n & n^2-4n & n^2 - 2n \\ \hline
      n^2 -2n & n^2-4n &   n^2+2n & n^2 \\ \hline
      n^2 & n^2+2n & n^2-4n & n^2 - 2n \\ \hline
      n^2 -2n & n^2-4n &   n^2+2n & n^2 \\ \hline
    \end{array}
    \]
  }
  
  Consequently, when we add all the $d_{i,j}$ along a left pandiagonal,  $P_{1,c}$,  we get the sum to be
  \[ \begin{array}{lllllll}
    \frac{n}{2}n^2 -n^2,  & \mbox{if $c$ is odd, and }, \\ \\
    \frac{n}{2}n^2  & \mbox{if $c$ is even}. 
  \end{array} 
  \]

  Recall that the magic sum $M = \frac{n}{2}(n^2+1)$. The pandiagonal
  sum is the sum of all $s_{i,j}$ along a pandiagonal. Since $s_{i,j} =
  y_{i,j} + d_{i,j}$, for a left pandiagonal $P_{1,c}$ when $c$ is odd,
  the pandiagonal sum add to
  \[
  \frac{n^2}{2} + \frac{n}{2} + \frac{n}{2}n^2 -n^2 = \frac{n}{2}(n^2+1) - \frac{n^2}{2} = M  - \frac{n^2}{2}. 
  \]

  On the other hand, for a left pandiagonal  $P_{1,c}$, when $c$ is even, the pandiagonal sum  add to 
  \[
  \frac{n^2}{2} + \frac{n}{2} + \frac{n}{2}n^2  = \frac{n}{2}(n^2+1) + \frac{n^2}{2} =  M  + \frac{n^2}{2}. 
  \]

  For example, in the case of F1,  the entries in the pandiagonal sum of $P_{1,1}$ is shown in bold below. Observe that the pandiagonal sum is $M-\frac{n^2}{2}$.

  \[\begin{array}{ccccccccccc}
      [y_{i,j}] & & [d_{i,j}] \\ 
      \begin{array}{|c|c|c|c|c|c|c|c|c|c|c|c|c|c|c|c|} \hline
        \bf{\frac{5n}{4}} + 1 & \frac{3n}{4}  + 1    &  \frac{5n}{4} + 1 & \frac{3n}{4} + 1  \\ \hline
        \frac{3n}{4} + 1 & \bf{\frac{5n}{4} + 1} & \frac{3n}{4} +  & \frac{5n}{4} + 1  \\ \hline
        \frac{3n}{4} + 1 &  \frac{5n}{4} + 1 & \bf{\frac{3n}{4} + 1} & \frac{5n}{4} +   \\ \hline
        \frac{5n}{4} + 1 & \frac{3n}{4} + 1 &  \frac{5n}{4} + 1 & \bf{\frac{3n}{4} + 1}  \\ \hline
      \end{array} && 
      \begin{array}{|c|c|c|c|c|c|c|c|c|c|c|c|c|c|c|c|} \hline
        \bf{n^2} & n^2+2n & n^2-4n & n^2 - 2n \\ \hline
        n^2 -2n & \bf{n^2-4n} &   n^2+2n & n^2 \\ \hline
        n^2 & n^2+2n & \bf{n^2-4n} & n^2 - 2n \\ \hline
        n^2 -2n & n^2-4n &   n^2+2n &\bf{ n^2} \\ \hline
      \end{array} \\ \\
      
  \end{array}
  \]

  The entries of the pandiagonal sum of $P_{1,2}$ is shown in bold below. Check that the pandiagonal sum is $M+\frac{n^2}{2}$.
  \[\begin{array}{ccccccccccc}
      [y_{i,j}] & & [d_{i,j}] \\ 
      \begin{array}{|c|c|c|c|c|c|c|c|c|c|c|c|c|c|c|c|} \hline
        \frac{5n}{4} + 1 & \bf{\frac{3n}{4}  + 1}    &  \frac{5n}{4} + 1 & \frac{3n}{4} + 1  \\ \hline
        \frac{3n}{4} + 1 & \frac{5n}{4} + 1 & \bf{\frac{3n}{4} + 1} & \frac{5n}{4} + 1  \\ \hline
        \frac{3n}{4} + 1 &  \frac{5n}{4} + 1 & \frac{3n}{4} + 1 & \bf{\frac{5n}{4} + 1}  \\ \hline
        \bf{\frac{5n}{4}} + 1 & \frac{3n}{4} + 1 &  \frac{5n}{4} + 1 & \frac{3n}{4} + 1  \\ \hline
      \end{array} && 
      \begin{array}{|c|c|c|c|c|c|c|c|c|c|c|c|c|c|c|c|} \hline
        n^2 & \bf{n^2+2n} & n^2-4n & n^2 - 2n \\ \hline
        n^2 -2n & n^2-4n &   \bf{n^2+2n} & n^2 \\ \hline
        n^2 & n^2+2n & n^2-4n & \bf{n^2 - 2n} \\ \hline
        \bf{n^2 -2n} & n^2-4n &   n^2+2n & n^2 \\ \hline
      \end{array} \\ \\
      
  \end{array}
  \]

  The proof is similar for right pandiagonals.  Thus, all the pandigonals add to $M \pm n^2/2$.
\end{proof}

Even though the Franklin square F2 is not constructed using the $N-i$
method, observe that all the pandiagonals of F2, also, add to $M \pm
n^2/2$.

%%%%%%%%%%%%%%%%%%%%%%%%%%%%%%%%%%%%%%%%%%%%%%%%%%%%%%%%%%%%%%%%%%%%%%%%%%%%%%%
%%%%%%%%%%%%%%%%%%%%%%%%%%%%%%%%%%%%%%%%%%%%%%%%%%%%%%%%%%%%%%%%%%%%%%%%%%%%%%%
%%%%%%%%%%%%%%%%%%%%%%%%%%%%%%%%%%%%%%%%%%%%%%%%%%%%%%%%%%%%%%%%%%%%%%%%%%%%%%%

\section{New method to construct Narayana square.} \label{newmethodsection}

In this section, we develop the $N-i$ method to construct Narayana
squares. We then show that the $N-i$ method is the same as the Chadya
and Chadaka method of Section \ref{introsection}. Finally, we modify the $N-i$
method to create a new Narayana square.

We start by dividing the Narayana square in to two sides: the {\em
  left side} consisting of the first $n/2$ columns and the {\em right side}
consisting of the last $n/2$ columns.  Each side is further divided
in to two parts: the {\em Top part} consisting of the first $n/2$ rows, and
the {\em Bottom part} consisting of the last $n/2$ rows.

Let $N=n^2+1$. The strategy is to first place the numbers $i$, where
$i=1,2, \dots, n^2/2$, and then place the numbers $N-i$, such that
all the defining properties of the square are satisfied.

{\em Distance of a column} in a given side is defined as the number of
columns between the given column and the left edge of the
side. Consequently, the distance of the $j$ th column of a side is
$j-1$.

The square is build, partially, two equidistant columns, one from each
side of the square, at a time. Given a pair of equidistant columns, we
denote the column in left side as $C_l$ and the column in the right
side as $C_r$.

Consider a given part with $r$ rows and a {\em starting number}
$A$. There are only two operations involved for such a part.  An {\em
  Up }operation where consecutive numbers from $A$ to $A+r$ are
filled, in consecutive rows, starting from the bottom row of the part,
and column $C_l$, upwards, alternating between the columns $C_l$ and
$C_r$.  The only other operation is the {\em Down operation} where
consecutive numbers from $A$ to $A+r$ are filled, in consecutive rows,
alternating between the columns $C_l$ and $C_r$, starting from the top
row of the part, and column $C_r$, in a downward direction.

For a given pair of equidistant columns, the sequence of operations
depends on the parity of the distance, and is as described below.
\begin{adjustwidth}{-2cm}{-1cm} 
  \[ \begin{array}{ccc}
    \mbox{Even distance} & & \mbox{Odd distance}  \\
    \begin{array}{|c|cc|} \hline
      \mbox{Part}  & \mbox{Top} & \mbox{Bottom} \\ \hline
      \mbox{Operation} &  \mbox{Up} & \mbox{Down}  \\ \hline
      \mbox{Starting Number} & A & A+n/2  \\ \hline
    \end{array}  & & 

    \begin{array}{|c|cc|} \hline
      \mbox{Part}  & \mbox{Bottom} &  \mbox{Top}  \\ \hline
      \mbox{Operation} &  \mbox{Up} & \mbox{Down} \\ \hline
      \mbox{Starting Number} & A & A+n/2 \\ \hline
    \end{array}
  \end{array}
  \]
\end{adjustwidth}

This sequence will place $n$ consecutive numbers in the chosen two
columns. If the distance is $d$, then the starting number for the
entire sequence of operations, is $nd+1$.

Finally, we complete the square by placing the numbers $N-i$, $i=1,
2, \dots n^2/2$ as follows.  At this stage, the odd rows of the left
part and the even rows of the right part are empty. Subtractions from
$N$ occur across diagonal parts. The odd rows of the top left part
are obtained by subtracting the entries from the corresponding cells
of the bottom right part, from $N$. The odd rows of the bottom left
part are obtained by subtracting the entries in  the corresponding
cells in the top right part, from $N$.  Similarly, the even rows of the
top right part is obtained by subtracting the entries in the
corresponding cells in the bottom left part, from $N$. The even rows of
the bottom right part is obtained by subtracting the entries in the
corresponding cells in the top left part, from $N$.

\begin{exm}

  In Table \ref{tablenarayansteps}, the steps of filling the numbers $1$
  to $32$ in the $8 \times 8$ Narayana square N1 are shown. We start
  with the first columns of the left and right side.  The distance of
  this pair from their respective left edges, is zero.  Hence the
  starting number is $nd+1 = 8\times 0 + 1 = 1$.  Here $C_l$ is column 1,
  and $C_r$ is column $5$ of the square. The sequence of operation is
  \[
  \begin{array}{|c|cc|} \hline
    \mbox{Part}  & \mbox{Top} & \mbox{Bottom}\\ \hline
    \mbox{Operation} &  \mbox{Up} & \mbox{Down} \\ \hline
    \mbox{Starting Number} & 1 &  5 \\ \hline
  \end{array}
  \]

  That is, we enter the numbers from $1$ to $4$ in the top part,
  starting from $C_l$, using the Up operation.  Next we enter the
  numbers from $5$ to $8$ in the bottom part, starting from $C_r$, using
  the Down operation. See Step 1 of Table \ref{tablenarayansteps}.

  In Step 2 of Table \ref{tablenarayansteps}, we consider equidistant
  columns of distance $1$. The starting number is $nd+1
  = 8 \times 1 + 1 = 9$.  Since the distance is odd, the sequence of
  operation is
  \[
  \begin{array}{|c|cc|} \hline
    \mbox{Part}  &  \mbox{Bottom}  & \mbox{Top}  \\ \hline
    \mbox{Operation} &  \mbox{Up} & \mbox{Down}  \\ \hline
    \mbox{Starting Number} & 9 &  13  \\ \hline
  \end{array}
  \]
  Thus, the numbers $9$ to $16$ are placed in the two columns using the
  above sequence of operations. Steps 3 and 4 demonstrate the placement
  of numbers from $17$ to $32$ in the rest of the columns of the square
  N1.  See Table \ref{tablenarayan16partial} for the placement of the
  numbers from $1$ to $128$ in the $16 \times 16$ Narayana square N2.

  The filling of the empty cells in square N1 with $N-i$, where $i$ is
  the entry in the corresponding cell in a diagonal part, is given in
  Table \ref{tablenarayanstepfinal}. The final step of filling empty
  cells for the square N2 is given in Table
  \ref{tablenarayan16subtractfromN}.
\end{exm}

\begin{table}{\scriptsize
    \caption{Construction of $8 \times 8$ Narayana square N1: filling the numbers from $1$ to $32$.} 
    \label{tablenarayansteps} 

    \[\begin{array}{ccccccccccccccccccccccccccccc}
    \mbox{Step 1} &&&& \mbox{Step 2}\\ \\

    \begin{array}{cc} 
      \begin{array}{|c|c|c|c|} \hline
        - & - & - & -   \\ \hline
        3 & - & - & -   \\ \hline
        - & - & - & - \\ \hline
        1 & - & - & -   \\ \hline
      \end{array} & \hspace{.13in}
      \begin{array}{|c|c|c|c|} \hline
        4 & - & - & -   \\ \hline
        - & - & - & - \\ \hline
        2 & - & - & -   \\ \hline
        - & - & - & -\\  \hline 
      \end{array}  \\ \\

      \begin{array}{|c|c|c|c|} \hline
        - & - & - & -  \\ \hline
        6 & - & - & -   \\ \hline
        - & - & - & - \\ \hline
        8 & - & - & -   \\ \hline
      \end{array} & \hspace{.13in}
      \begin{array}{|c|c|c|c|} \hline
        5 & - & - & -   \\ \hline
        - & - & - & -   \\ \hline
        7 & - & - & - \\ \hline
        - & - & - & - \\ \hline
      \end{array}
    \end{array}

    & & & &  

    \begin{array}{cc} 
      \begin{array}{|c|c|c|c|} \hline
        - & - & - & -   \\ \hline
        3 & 14& - & -   \\ \hline
        - & - & - & - \\ \hline
        1 & 16 & - & -   \\ \hline
      \end{array} & \hspace{.13in}
      \begin{array}{|c|c|c|c|} \hline
        4 & 13 & - & -   \\ \hline
        - & - & - & - \\ \hline
        2 & 15 &- & -   \\ \hline
        - & - & - & -\\  \hline 
      \end{array}  \\ \\
      \begin{array}{|c|c|c|c|} \hline
        - & - & - & -  \\ \hline
        6 & 11 & - & -   \\ \hline

        - & - & - & - \\ \hline
        8 & 9 & - & -   \\ \hline
      \end{array} & \hspace{.13in}
      \begin{array}{|c|c|c|c|} \hline
        5 & 12 & - & -   \\ \hline
        - & - & - & -   \\ \hline
        7 & 10 & - & - \\ \hline
        - & - & - & - \\ \hline
      \end{array}

    \end{array} \\ \\ 

    \mbox{Step 3} &&&& \mbox{Step 4}\\ \\
    \begin{array}{cc} 
      \begin{array}{|c|c|c|c|} \hline
        - & - & - & -   \\ \hline
        3 & 14& 19 & -   \\ \hline
        - & - & - & - \\ \hline
        1 & 16 & 17 & -   \\ \hline

      \end{array} & \hspace{.13in}
      \begin{array}{|c|c|c|c|} \hline
        4 & 13 & 20 & -   \\ \hline
        - & - & - & - \\ \hline
        2 & 15 & 18 & -   \\ \hline
        - & - & - & -\\  \hline 
      \end{array}  \\ \\

      \begin{array}{|c|c|c|c|} \hline
        - & - & - & -  \\ \hline
        6 & 11 & 22 & -   \\ \hline

        - & - & - & - \\ \hline
        8 & 9 & 24 & -   \\ \hline

      \end{array} & \hspace{.13in}
      \begin{array}{|c|c|c|c|} \hline
        5 & 12 & 21 & -   \\ \hline
        - & - & - & -   \\ \hline
        7 & 10 & 23 & - \\ \hline
        - & - & - & - \\ \hline
      \end{array}
    \end{array}

    & & & &  
    \begin{array}{cc} 

      \begin{array}{|c|c|c|c|} \hline
        - & - & - & -   \\ \hline
        3 & 14 & 19 & 30  \\ \hline
        - & - & - & - \\ \hline
        1 & 16 & 17 & 32\\ \hline 

      \end{array} & \hspace{.13in}
      \begin{array}{|c|c|c|c|} \hline
        4 & 13 & 20 & 29 \\ \hline
        - & - & - & - \\ \hline
        2 & 15 & 18 & 31  \\ \hline
        - & - & - & -\\  \hline 
      \end{array}  \\ \\

      \begin{array}{|c|c|c|c|} \hline
        - & - & - & -  \\ \hline
        6 & 11 & 22 & 27 \\ \hline
        - & - & - & - \\ \hline
        8 & 9 & 24 & 25  \\ \hline
      \end{array} & \hspace{.13in}
      \begin{array}{|c|c|c|c|} \hline
        5 & 12 & 21 & 28 \\ \hline
        - & - & - & - \\ \hline
        7 & 10 & 23 & 26 \\ \hline
        - & - & - & - \\ \hline
      \end{array}
    \end{array}
    \end{array}
    \]
}  \end{table}

\begin{table} {\scriptsize
    \caption{Construction of Narayana square  N1.} 
    \label{tablenarayanstepfinal} 

    \[
    \begin{array}{cc} 

      \begin{array}{|c|c|c|c|} \hline
        N-5 & N-12 & N-21 & N-28   \\ \hline
        3 & 14 & 19 & 30  \\ \hline
        N-7 & N-10 & N-23 & N-26 \\ \hline
        1 & 16 & 17 & 32\\ \hline 

      \end{array} & \hspace{.13in}
      \begin{array}{|c|c|c|c|} \hline
        4 & 13 & 20 & 29 \\ \hline
        N-6 & N-11& N-22 & N-27 \\ \hline
        2 & 15 & 18 & 31  \\ \hline
        N-8 & N-9 & N-24 & N-25\\  \hline 
      \end{array}  \\ \\

      \begin{array}{|c|c|c|c|} \hline
        N-4 & N-13 & N-20 & N-29  \\ \hline
        6 & 11 & 22 & 27 \\ \hline
        N-2 & N-15 & N-18 & N-31 \\ \hline
        8 & 9 & 24 & 25  \\ \hline
      \end{array} & \hspace{.13in}
      \begin{array}{|c|c|c|c|} \hline
        5 & 12 & 21 & 28 \\ \hline
        N-3 & N-14 & N-19 & N-30 \\ \hline
        7 & 10 & 23 & 26 \\ \hline
        N-1 & N-16 & N-17 & N-32 \\ \hline
      \end{array}

    \end{array}
    \]
  }
\end{table}

\begin{table} {\scriptsize
    \caption{Partially filled rows of the $16 \times 16$ Narayana square N2.} 
    \label{tablenarayan16partial} 

    \[
    \begin{array}{cc} 

      \begin{array}{|c|c|c|c|c|c|c|c|} \hline
        - & - & - & - &-& - & -& - \\ \hline
        7 & 26 & 39 & 58 &71& 90 & 103& 122  \\ \hline
        - & - & - & - &-& - & -& -    \\ \hline
        5 & 28 & 37 & 60 &69& 92 & 101& 124  \\ \hline
        - & - & - & - &-& - & -& -    \\ \hline
        3 & 30 & 35 & 62 &67& 94 & 99&126  \\ \hline
        - & - & - & - &-& - & -& -   \\ \hline
        1 & 32 & 33 & 64 &65& 96 & 97&128  \\ \hline
      \end{array} &
      \begin{array}{|c|c|c|c|c|c|c|c|} \hline
        8& 25 & 40 & 57 &72& 89 & 104&121  \\ \hline
        - & - & - & - &-& - & -&-  \\ \hline
        6 & 27 & 38 & 59 &70& 91 & 102&123  \\ \hline
        - & - & - & - &-& - & -&-  \\ \hline
        4 & 29 & 36 & 61 &68& 93 & 100&125  \\ \hline
        - & - & - & - &-& - & -&-  \\ \hline
        2 & 31 & 34 & 63&66& 95 & 98&127  \\ \hline
        - & - & - & - &-& - & -&-  \\ \hline
      \end{array}  \\ \\
      \begin{array}{|c|c|c|c|c|c|c|c|} \hline
        - & - & - & - &-& - & -&-  \\ \hline
        10 & 23 & 42 & 55 &74& 87 & 106&119 \\ \hline
        - & - & - & - &-& - & -&-  \\ \hline
        12 & 21 & 44& 53 &76& 85 & 108&117 \\ \hline
        - & - & - & - &-& - & -&-  \\ \hline
        14& 19& 46 & 51 &78& 83 & 110&115  \\ \hline
        - & - & - & - &-& - & -&-  \\ \hline
        16& 17 & 48 & 49 &80& 81 & 112&113  \\ \hline

      \end{array} & 
      \begin{array}{|c|c|c|c|c|c|c|c|} \hline
        9 & 24& 41 & 56 &73& 88 & 105& 120  \\ \hline
        - & - & - & - &-& - & -&-  \\ \hline
        11 & 22& 43 & 54 &75& 86 & 107&118 \\ \hline
        - & - & - & - &-& - & -&-  \\ \hline
        13 & 20 & 45 & 52 &77& 84 & 109&116 \\ \hline
        - & - & - & - &-& - & -&-  \\ \hline
        15& 18 & 47 & 50 &79&  82& 111&114  \\ \hline
        - & - & - & - &-& - & -&-  \\ \hline
      \end{array} 
    \end{array}
    \]
  }
\end{table}

\begin{table}  { \scriptsize
    \caption{Final step in the construction of the $16 \times 16$ Narayana
      square N2.}
    \label{tablenarayan16subtractfromN} 
    
    \begin{adjustwidth}{-3cm}{-1cm} 
      \[
      \begin{array}{cc} 
        \begin{array}{|c|c|c|c|c|c|c|c|} \hline
          N-9 & N-24& N-41 & N-56 &N-73& N-88 & N-105& N-120  \\ \hline
          7 & 26 & 39 & 58 &71& 90 & 103& 122  \\ \hline
          N-11 & N-22& N-43 & N-54 &N-75& N-86 & N-107&N-118 \\ \hline
          5 & 28 & 37 & 60 &69& 92 & 101& 124  \\ \hline
          N-13 & N-20 & N-45 & N-52 &N-77& N-84 & N-109& N-116 \\ \hline
          3 & 30 & 35 & 62 &67& 94 & 99&126  \\ \hline
          N-15& N-18 & N-47 & N-50 & N-79&  N-82& N-111& N-114  \\ \hline
          1 & 32 & 33 & 64 &65& 96 & 97&128  \\ \hline
        \end{array} & \hspace{.13in}
        \begin{array}{|c|c|c|c|c|c|c|c|} \hline
          8& 25 & 40 & 57 &72& 89 & 104&121  \\ \hline
          N-10 & N-23 & N-42 & N-55 &N-74& N-87 & N-106& N-119 \\ \hline
          6 & 27 & 38 & 59 &70& 91 & 102&123  \\ \hline
          N-12 & N-21 & N-44& N-53 &N-76& N-85 & N-108& N-117 \\ \hline
          4 & 29 & 36 & 61 &68& 93 & 100&125  \\ \hline
          N-14& N-19& N-46 & N-51 &N-78& N-83 & N-110& N-115  \\ \hline
          2 & 31 & 34 & 63&66& 95 & 98&127  \\ \hline
          N-16& N-17 & N-48 & N-49 & N-80& N-81 & N-112& N-113  \\ \hline
        \end{array}  \\ \\
        \begin{array}{|c|c|c|c|c|c|c|c|} \hline
          N-8& N-25 & N-40 & N-57 & N-72& N-89 & N-104& N-121  \\ \hline
          10 & 23 & 42 & 55 &74& 87 & 106&119 \\ \hline
          N-6 & N-27 & N-38 & N-59 & N-70&  N-91 & N-102& N-123  \\ \hline
          12 & 21 & 44& 53 &76& 85 & 108&117 \\ \hline
          N-4 & N-29 & N-36 & N-61 &N-68& N-93 & N-100& N-125  \\ \hline
          14& 19& 46 & 51 &78& 83 & 110&115  \\ \hline
          N-2 & N-31 & N-34 & N-63& N-66& N-95 & N-98& N-127  \\ \hline
          16& 17 & 48 & 49 &80& 81 & 112&113  \\ \hline

        \end{array} & \hspace{.13in}
        \begin{array}{|c|c|c|c|c|c|c|c|} \hline
          9 & 24& 41 & 56 &73& 88 & 105& 120  \\ \hline
          N-7 & N-26 & N-39 & N-58 &N-71& N-90 & N-103& N-122  \\ \hline
          11 & 22& 43 & 54 &75& 86 & 107&118 \\ \hline
          N-5 & N-28 & N-37 & N-60 &N-69& N-92 & N-101& N-124  \\ \hline
          13 & 20 & 45 & 52 &77& 84 & 109&116 \\ \hline
          N-3 & N-30 & N-35 & N-62 &N-67& N-94 & N-99& N-126  \\ \hline
          15& 18 & 47 & 50 &79&  82& 111&114  \\ \hline
          N-1 & N-32 & N-33 & N-64 &N-65& N-96 & N-97& N-128  \\ \hline

        \end{array} 
      \end{array}
      \]
    \end{adjustwidth}
  }
\end{table}

Summarizing, we derive the following algorithm for constructing Narayana squares.

\begin{algo}(Constructing $n \times n$ Narayana squares.) \label{narayanacosntructalgo1}
  \begin{enumerate}
  \item Partial filling of the left side.

    Start with the first two columns of each side, and then work
    outwards two equidistant columns at a time. Fill $n$ numbers at
    every step. We follow the sequence of operations according to the
    parity of the distance of the columns, as explained above. 

  \item \label{subtractnarastep}  Subtractions from $N$ to complete the square.

    Let $a_{i,j}$ denote the entries of the square.  Let $1 \leq i,j
    \leq n/2$.
    
    The empty cells in the top part and left side of the square, that is,
    when $i$ is odd, are given by
    \[
    a_{i,j} = N-a_{\frac{n}{2}+i, \frac{n}{2}+j}.
    \]

    The empty cells in the bottom part and left side of the square, that is,
    when $i$ is odd, are given by
    \[
    a_{n/2+i,j} = N-a_{i, \frac{n}{2}+j}.
    \]

    The empty cells in the top part and right side of the square, that is,
    when $i$ is even, are given by
    \[
    a_{i,n/2+j} = N-a_{\frac{n}{2}+i, j}.
    \]

    The empty cells in the bottom  part and right side of the square, that is,
    when $i$ is even, are given by
    
    \[
    a_{\frac{n}{2}+i, \frac{n}{2}+j} = N- a_{i,j}.
    \]
    
  \end{enumerate}

\end{algo} 

It will be established, soon, that Algorithm
\ref{narayanacosntructalgo1} is the same as Narayana's original
Chadya-Chadaka method of construction. Since this is an ancient, well
known method, Algorithm \ref{narayanacosntructalgo1} needs no
proof. However, proving that the Algorithm works, gives us an
opportunity to explore many interesting properties of the square.

\begin{lemma} \label{lemmanarayproof}  Let $a_{i,j}$ denote the entries of a $n \times n$ Narayana square.  If the number
  of rows or columns of two entries, from a given axis, is the same,
  the entries are called equidistant.
  \begin{enumerate}

  \item  \label{consecuticeincolnaritempart} Pair of entries of adjacent rows in a column add to $N \pm 2i$
    except for the rows $i=n/2$ and $i=n$, as follows.

    Consider a row  $i \in \{1,2, \dots, n\} \setminus
    \{n/2,  n\}$.  In  the  top part of  the
    square, that is, when $1 \leq i < n/2$,  we have
    \[
    a_{i,j}+a_{i+1,j} = \left \{
    \begin{array}{llllllllll}
      N-2i, & \mbox{if $j$ is odd,} \\
      N+2i, & \mbox{if $j$ is even.} \\
    \end{array}
    \right .
    \]

    For the bottom part, the situation is exactly opposite. That is, when
    $n/2 < i < n$, we have
    \[
    a_{i,j}+a_{i+1,j} =  \left \{ \begin{array}{llllllllll}
      N-2i, & \mbox{if $j$ is even,} \\
      N+2i, & \mbox{if $j$ is odd.} \\
    \end{array}
    \right.
    \]
    Finally, we consider rows $n/2$ and $n$.
    \[
    a_{n/2,j}+a_{n/2+1,j} =  \left \{ \begin{array}{llllllllll}
      N-(n/2-1),  & \mbox{if $j$ is odd,} \\
      N+(n/2-1),  & \mbox{if $j$ is even.} \\
    \end{array}
    \right.
    \]

    \[
    a_{n,j}+a_{1,j}
    =  \left \{ \begin{array}{llllllllll}
      N-(n/2-1),  & \mbox{if $j$ is even,} \\
      N+(n/2-1),  & \mbox{if $j$ is odd.} \\
    \end{array}
    \right.
    \]

  \item \label{halfrowitemnara} Let $m=n^2/2+1$, then equidistant entries from center in half rows add to $m$ or
    $2N-m$ as described below.  Consider the left side, that is $1
    \leq j \leq n/2$,
    \[
    a_{i,j} + a_{i,n/2+1-j} =  \left \{ \begin{array}{llll} m,  & \mbox{ if $i$ is even,} \\
      2N-m,  & \mbox{ if $i$ is odd.}

    \end{array} \right.
    \]

    For the right side, where  $n/2 < j \leq n$,
    \[
    a_{i,j} + a_{i,n+1-j} =  \left \{ \begin{array}{llll} m,  & \mbox{ if $i$ is odd,} \\
      2N-m,   & \mbox{ if $i$ is even.}

    \end{array} \right.
    \]

  \item \label{halfcolitemnara}  Equidistant entries from center in half columns add to $N \pm n/2$ as follows. 
    For top part, that is, $1 \leq i \leq n/2$,
    \[
    a_{i,j} + a_{\frac{n}{2}+1-i, j} =  \left \{ \begin{array}{llll} N - \frac{n}{2},  & \mbox{ if $j$ is odd,} \\ \\
      N + \frac{n}{2}, & \mbox{ if $j$ is even.}
    \end{array} \right.
    \]

    For bottom part, that is, $n/2 < i \leq n$,
    \[
    a_{i,j} + a_{\frac{n}{2}+1-i, j} =  \left \{ \begin{array}{llll} N - \frac{n}{2},  & \mbox{ if $j$ is even,} \\ \\
      N + \frac{n}{2}, & \mbox{ if $j$ is odd.}
    \end{array} \right.
    \]

  \item \label{partequhorinaara}  Let $m_i = n/2-1-2(i-1)$,  for  $1 \leq i \leq n/2$.

    Equidistant entries across the horizontal axis add to $N \pm m_i$ as follows.

    When $1 \leq i \leq n/4$,

    \[a_{i,j} + a_{n+1-i,j}  = \left \{ \begin{array}{llllllllll} N + m_i,  & \mbox{ for $j$ odd,  }  \\ \\
      N - m_i, & \mbox{ for $j$ even.   }
    \end{array} \right.
    \]

    When $n/4+1 \leq i \leq  n/2$,

    \[a_{i,j} + a_{n+1-i,j}  = \left \{ \begin{array}{llllllllll} N - m_i,  & \mbox{ for $j$ odd,  }  \\ \\
      N + m_i, & \mbox{ for $j$ even.   }
    \end{array} \right.
    \]

  \item  \label{partequvertinara} Let $m_j= n/2-1-2(j-1)$, where
    $1 \leq j \leq n/2$. Equidistant entries across the vertical axis add to $N \pm m_jn$, as shown below.

    When   $1 \leq j \leq n/4$,
    \[a_{i,j} + a_{i,n+1-j} = \left \{ \begin{array}{llllllllll} N + m_jn,  & \mbox{ for $i$ odd,  }  \\ \\
      N - m_jn, & \mbox{ for $i$ even.   }
    \end{array} \right.
    \]

    When $n/4+1 \leq  j \leq n/2$,

    \[a_{i,j} + a_{i,n+1-j} = \left \{ \begin{array}{llllllllll} N - m_jn,  & \mbox{ for $i$ odd,  }  \\ \\
      N + m_jn, & \mbox{ for $i$ even.   }
    \end{array} \right.
    \]
  \end{enumerate}
\end{lemma}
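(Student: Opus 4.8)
The strategy mirrors the proof of the analogous Franklin statement, Lemma~\ref{prooffranklinalgolemma}: each identity is forced by the construction, so the plan is to extract explicit closed forms for the entries produced by Algorithm~\ref{narayanacosntructalgo1} and then read off every sum. First I would record, for each pair of equidistant columns at distance $d$, the number deposited in row $i$ during the partial fill. The pair at distance $d$ receives the block of consecutive integers $nd+1,\dots,nd+n$, and the Up/Down prescription merely alternates this block between $C_l$ and $C_r$ one row at a time; reading off the resulting values yields formulas of the shape $a_{i,j}=n(j-1)+1+\tfrac n2-i$ or $a_{i,j}=n(j-1)+i$ (for even distance, i.e.\ $j$ odd) together with their odd-distance counterparts. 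Because $n=2^r$ makes $\tfrac n2$ even, the parities of $i$ and $\tfrac n2+i$ agree, so on the left side the partial fill occupies exactly the even rows and on the right side exactly the odd rows, and the closed forms above describe precisely those cells.

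Next I would use the diagonal subtraction rule of Step~\ref{subtractnarastep} to convert the remaining (empty) cells into known quantities. That rule writes each $N$-complement cell as $N$ minus a small-number cell lying in the diagonally opposite quadrant, and the parity alignment just noted guarantees that the target cell always lies in the region where the closed forms are valid. Substituting gives an explicit value of the form $N-(\text{linear in }i,j)$ for every complement cell, so after this step every entry of the square is known in closed form, split into two families according to quadrant and parity.

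With both families in hand, each of the five parts reduces to a one-line addition once the cases are sorted. For Part~\ref{consecuticeincolnaritempart} I fix a column $j$ and add rows $i$ and $i+1$; in any column the rows alternate between a small number and a complement, so the sum is $N$ plus the difference of two linear expressions, which collapses to $N\pm 2i$ after separating $j$ odd/even and top/bottom. The three exceptional pairs, at rows $\tfrac n2$ and $n$, are exactly the places where rows $i$ and $i+1$ sit in differently filled parts or wrap around the torus, which is why the increment changes from $2i$ to $\tfrac n2-1$; these I treat separately. Parts~\ref{halfcolitemnara}, \ref{partequhorinaara} and \ref{partequvertinara} are identical in spirit: each pairs a cell with its mirror across a part-center or a global axis, and each such reflection swaps small for complement, so the $N$ appears automatically while the residual $\pm\tfrac n2$, $\pm m_i$, or $\pm m_jn$ is just the difference of the two Mulapankti/Ganapankti offsets. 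Part~\ref{halfrowitemnara} is the one case where the mirror keeps the cell type fixed, two small numbers on even rows and two complements on odd rows, so the sum is $m$ or $2N-m$ respectively; here the value $m=n^2/2+1$ arises because the two columns involved have distances summing to $\tfrac n2-1$, forcing their block offsets to complement.

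The only genuine difficulty is the bookkeeping. The hard part will be keeping straight the web of parities, of the row $i$, the column $j$, and the distance $d$ that selects Up versus Down, compounded by the fact that the small numbers sit on even rows on the left but odd rows on the right. I would control this by fixing the four quadrants (top/bottom $\times$ left/right), each with its own closed form, checking once and for all that Step~\ref{subtractnarastep} matches quadrants correctly, after which every one of the five identities becomes a routine linear computation in each parity case, needing nothing beyond this case analysis.
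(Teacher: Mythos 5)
Your proposal is correct and takes essentially the same approach as the paper: the paper's entire proof of this lemma is the single sentence ``The square inherits these properties by construction,'' and your plan---explicit closed forms for the cells filled in the partial step (even rows of the left side, odd rows of the right side, since $n/2$ is even) and for the $N$-complemented cells, followed by a parity case analysis for each of the five sums---is exactly the verification that sentence leaves implicit. The details you sketch do go through against the construction, so your write-up is, if anything, more complete than the paper's.
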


\begin{proof}
  The square inherits these properties by construction.
\end{proof} 

\begin{corollary} \label{halfrowsumcor}
  Consider an $n \times n$ Narayana square.  Let $M$ denote the magic
  sum and let $m=n^2/2+1$. Then,
  \begin{enumerate}
  \item  Half row sums add either to $(n/4)m$ or $M-(n/4)m$.
  \item  Half column sums add to  $M/2 \pm n^2/8$.
  \end{enumerate}

\end{corollary}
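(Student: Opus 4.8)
The plan is to read off both statements from the two pairing identities in Lemma \ref{lemmanarayproof}, exploiting the fact that a half row (respectively a half column) breaks into $n/4$ disjoint pairs of entries symmetric about the midline of that half, with each pair contributing the same sum. Throughout I would use $M = (n/2)N$ and $m = n^2/2+1$, together with the observation that $n = 2^r$ with $r \ge 3$ makes $n/2$ even, so none of the reflections involved has a fixed point.

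First I would establish the half row sums from Part \ref{halfrowitemnara}. Fix a row $i$ and look at its left half, the columns $1,\dots,n/2$. The reflection $j \mapsto n/2+1-j$ about the midline of the left half is an involution with no fixed point (a fixed point would force $2j = n/2+1$, which is odd), so the left half is the union of exactly $n/4$ pairs $\{j,\,n/2+1-j\}$. By Part \ref{halfrowitemnara} each such pair sums to $m$ when $i$ is even and to $2N-m$ when $i$ is odd; summing $n/4$ equal contributions gives $(n/4)m$ or $(n/4)(2N-m) = M-(n/4)m$. The right half is identical after replacing the reflection by $j \mapsto 3n/2+1-j$ about the midline of the right half, with the two parities of $i$ interchanged, so the right half sum is again $(n/4)m$ or $M-(n/4)m$. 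As a consistency check, the two halves of a full row add to $M$. This proves the first claim.

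Next I would handle the half column sums in the same way from Part \ref{halfcolitemnara}. For the top part of a column $j$ the rows $1,\dots,n/2$ pair off under $i \mapsto n/2+1-i$, again fixed-point-free, into $n/4$ pairs each summing to $N-n/2$ when $j$ is odd and to $N+n/2$ when $j$ is even. Hence the top half column sum is $(n/4)(N \mp n/2) = (n/4)N \mp n^2/8$, and since $(n/4)N = M/2$ this equals $M/2 \mp n^2/8$. The bottom part, using the reflection $i \mapsto 3n/2+1-i$ about the midline of the bottom half and the companion formula in Part \ref{halfcolitemnara}, yields the same two values with the parity of $j$ flipped, which establishes the second claim.

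The calculation is short, so the only thing needing care --- the main obstacle, such as it is --- is the bookkeeping that makes the reduction legitimate: that each half decomposes into exactly $n/4$ pairs (the fixed-point-free involution argument) and that within a fixed half row or half column every pair carries the same sum (which holds because the pair sums in Lemma \ref{lemmanarayproof} depend only on the parity of the fixed index, not on which pair one is looking at). Once both points are in place, each half sum collapses to $n/4$ times a single value, and the two closed forms drop out immediately.
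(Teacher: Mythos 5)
Your proposal is correct and takes essentially the same route as the paper: both proofs decompose each half row (resp.\ half column) into $n/4$ equidistant pairs and evaluate each pair by Parts \ref{halfrowitemnara} and \ref{halfcolitemnara} of Lemma \ref{lemmanarayproof}, so each half sum collapses to $n/4$ times a single value, giving $(n/4)m$ or $(n/4)(2N-m)=M-(n/4)m$ and $(n/4)(N\pm n/2)=M/2\pm n^2/8$. Your explicit fixed-point-free involution check is a small extra piece of rigor that the paper leaves implicit, but it does not change the argument.
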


\begin{proof}
  \begin{enumerate}

  \item By Part \ref{halfrowitemnara} of Lemma \ref{lemmanarayproof},
    adding the $n/4$ equidistant pairs in half rows, we get
    \[ \begin{array}{lllllllllll}
      a_{i,1} + a_{i,2} + \cdots a_{i,\frac{n}{2}} = \left \{ \begin{array}{llllllllllll}
        M-(\frac{n}{4})m, & \mbox{ if $i$ is odd,} \\ \\
        (\frac{n}{4})m, & \mbox{ if $i$ is even.}
      \end{array} 
      \right . \\ \\
      a_{i,\frac{n}{2}+1} + a_{i,\frac{n}{2}+2} + \cdots a_{i,n} = \left \{ \begin{array}{llllllllllll}
        M-(\frac{n}{4})m, & \mbox{ if $i$ is even,} \\ \\
        (\frac{n}{4})m, & \mbox{ if $i$ is odd.}
      \end{array} 
      \right .
    \end{array} \]

    Consequently, for odd rows, left half row sums add  to  $M-(n/4)m$,  and right half row sums add  to $(n/4)m$.
    On the other hand, for even rows, left half row sums add  to $(n/4)m$,  and right half row sums add  to $M-(n/4)m$.

  \item By Part \ref{halfcolitemnara} of Lemma \ref{lemmanarayproof}, when $j$ is odd, and $1 \leq i \leq n/2$,
    \[ a_{i,j} + a_{\frac{n}{2}+1-i, j} = \left \{ \begin{array}{llllll} 
      N - \frac{n}{2},  & 1 \leq i \leq \frac{n}{2} \\ \\
      N + \frac{n}{2}, &  \frac{n}{2} < i \leq n.
    \end{array}
    \right .
    \]

    Therefore, for odd $j$, the top half columns add  to 
    \[
    \frac{n}{4} \left ( N - \frac{n}{2} \right ) =  \frac{M}{2} - \frac{n^2}{8},
    \]

    and  the bottom  half columns add  to $M/2 + n^2/8$.  

    Similarly, by Part \ref{halfcolitemnara} of Lemma
    \ref{lemmanarayproof}, we get, when $j$ is even, the top half columns
    add to $M/2 + n^2/8$, and the bottom half columns add to $M/2 -
    n^2/8$.

    Thus, half column sums add to $M/2 \pm n^2/8$.

  \end{enumerate}
\end{proof}

\begin{prop}
  Algorithm \ref{narayanacosntructalgo1} produces a Narayana square.
\end{prop}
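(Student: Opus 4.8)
The plan is to check the four defining properties of a Narayana square one at a time, using Corollary~\ref{halfrowsumcor} and Lemma~\ref{lemmanarayproof} for the row, column and $2\times2$ conditions, and reading the pandiagonal condition straight off the subtraction rule of Algorithm~\ref{narayanacosntructalgo1}. I would dispose of the first property, that every element of $\{1,\dots,n^2\}$ occurs once, by counting: the partial filling runs over the $n/2$ pairs of equidistant columns, depositing $n$ consecutive integers in each pair, so exactly $1,2,\dots,n^2/2$ are placed. Step~\ref{subtractnarastep} then enters $N-i$ in every remaining cell, where $i$ runs over the already-placed values; these complements are exactly $n^2/2+1,\dots,n^2$ and exceed every placed value, so all of $\{1,\dots,n^2\}$ appears once and $M=\tfrac n2 N$.

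The row and column sums follow at once from Corollary~\ref{halfrowsumcor}: in each row one half sums to $(n/4)m$ and the other to $M-(n/4)m$, and in each column the two halves sum to $M/2+n^2/8$ and $M/2-n^2/8$, so full rows and full columns add to $M$. For the continuous $2\times2$ property I would invoke Part~\ref{consecuticeincolnaritempart} of Lemma~\ref{lemmanarayproof}: two vertically adjacent entries of a column add to $N\pm2i$ (and to $N\pm(n/2-1)$ in the exceptional rows $n/2$ and $n$), with the sign governed by the parity of the column, so two horizontally adjacent columns contribute opposite signs and every $2\times2$ block sums to $2N$. Continuity then reduces to the wrap-around blocks; the horizontal wrap (columns $n$ and $1$) works because $n$ is even while $1$ is odd, and the vertical and corner wraps use the formula for $a_{n,j}+a_{1,j}$, the adjacent column again supplying the opposite sign.

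The main obstacle is the pandiagonal property, since the reflection-type pairings of Lemma~\ref{lemmanarayproof} pair entries within a single row or column and so do not meet a pandiagonal. Instead I would extract a \emph{half-shift} pairing directly from Step~\ref{subtractnarastep}. With $1\le i,j\le n/2$, its four formulas assert $a_{i,j}+a_{\frac n2+i,\frac n2+j}=N$ and $a_{i,\frac n2+j}+a_{\frac n2+i,j}=N$; since $(i,\tfrac n2+j)$ shifted by $(\tfrac n2,\tfrac n2)$ modulo $n$ is precisely $(\tfrac n2+i,j)$, these say that for \emph{every} cell $(a,b)$ one has $a_{a,b}+a_{a+\frac n2,\,b+\frac n2}=N$, indices read modulo $n$.

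It then remains to see how this pairing meets a pandiagonal. Advancing $n/2$ steps along a right pandiagonal increases both the row and the column by $n/2$, while along a left pandiagonal it increases the row by $n/2$ and decreases the column by $n/2$; as $-n/2\equiv n/2\pmod n$, in either case a cell is carried to its half-shift partner. Hence the $n$ cells of any pandiagonal split into $n/2$ partner pairs, each summing to $N$, so the pandiagonal sum is $(n/2)N=M$. This settles the left and right pandiagonals simultaneously and completes the verification that Algorithm~\ref{narayanacosntructalgo1} produces a Narayana square.
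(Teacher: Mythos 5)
Your proof is correct and follows essentially the same route as the paper: row and column sums via Corollary~\ref{halfrowsumcor}, the continuous $2\times2$ property via Part~\ref{consecuticeincolnaritempart} of Lemma~\ref{lemmanarayproof} with the same wrap-around checks, and the pandiagonal property via the half-shift complementation $a_{i,j}+a_{\frac n2+i,\frac n2+j}=N$ read off Step~\ref{subtractnarastep}, exactly as in the paper. The only addition is your explicit counting argument that each of $1,\dots,n^2$ occurs once, which the paper leaves implicit; this is a welcome but minor supplement, not a different approach.
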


\begin{proof}
  Let $a_{i,j}$ denote the entries of a $n \times n$ square constructed by
  Algorithm \ref{narayanacosntructalgo1}.
  \begin{enumerate}

  \item $2 \times 2$ sub-square sums.

    Consider a row $i \in \{1,2, \dots, n\} \setminus \{n/2, n\}$.

    By Part \ref{consecuticeincolnaritempart} of Lemma
    \ref{lemmanarayproof},
    \[
    \mbox{ if } a_{i,j} + a_{i+1,j} = N+2i, \mbox{ then } a_{i,j+1} + a_{i+1,j+1} = N-2i.
    \]
    On the other hand, 
    \[
    \mbox{ if } a_{i,j} + a_{i+1,j} = N-2i, \mbox{ then } a_{i,j+1} + a_{i+1,j+1} = N+2i.
    \]
    Consequently, for all $i \in \{1,2, \dots, n\} \setminus \{n/2, n\}$ and all $j$, 
    \[a_{i,j}+a_{i+1,j} + a_{i+1,j}+ a_{i+1,j+1} = 2N.\]

    Now we consider the row $n/2$.   By Part \ref{consecuticeincolnaritempart} of Lemma
    \ref{lemmanarayproof}, we get

    \[ \begin{array}{lllllllll}
      \mbox{ if } a_{\frac{n}{2},j} + a_{\frac{n}{2}+1,j} = N+(\frac{n}{2}-1), \mbox{ then } a_{\frac{n}{2},j+1} + a_{\frac{n}{2},j+1} =  N-(\frac{n}{2}-1),\mbox{ and } \\ \\
      \mbox{ if } a_{\frac{n}{2},j} + a_{\frac{n}{2}+1,j} = N-(\frac{n}{2}-1), \mbox{ then } a_{\frac{n}{2},j+1} + a_{\frac{n}{2},j+1} =  N+(\frac{n}{2}-1)  \end{array} 
    \]
    
    Consequently, all the $2 \times 2$ sub-squares, within the Narayana
    square, add to $2N$. Next, we verify the continuity of this property.

    By Part \ref{partequhorinaara} of Lemma
    \ref{lemmanarayproof},
    \[\begin{array}{lllllllll}
    \mbox{ if } a_{n,j} + a_{1,j} = N+(\frac{n}{2}-1) \mbox{ then } a_{n,j+1} + a_{n,j+1} = N-(\frac{n}{2}-1) \mbox{ and } \\ \\
    \mbox{ if } a_{n,j} + a_{1,j} = N-(\frac{n}{2}-1) \mbox{ then } a_{n,j+1} + a_{n,j+1} = N+(\frac{n}{2}-1).
    \end{array} 
    \]

    Consequently, the $2 \times 2$ sub-squares formed by rows $1$ and
    $n$ add to $2N$.  Part \ref{partequvertinara} of Lemma
    \ref{lemmanarayproof} implies

    \[ a_{i,1}+a_{i,n} =  \left \{ \begin{array}{llllllll}
      N+(\frac{n}{2}-1)n, & \mbox{ if $i$ is even, } \\ \\
      N-(\frac{n}{2}-1)n,  & \mbox{ if $i$ is odd. }
    \end{array}
    \right.
    \]
    Thus, the $2 \times 2$ sub-squares formed by columns $1$ and $n$ add
    to $2N$. This proves that the continuity property of $2 \times 2$
    sub-squares hold for squares constructed by Algorithm
    \ref{narayanacosntructalgo1}.

  \item Row and column sums. 

    By Corollary \ref{halfrowsumcor}, when $i$ is odd, left half row
    sums add to $M-(n/4)m$, and right half row sums add to
    $(n/4)m$. Consequently, when $i$ is odd the $i$-th row sum is
    $M$. Similarly, applying Corollary \ref{halfrowsumcor}, we see
    that necessary cancellations happen, when we add half row sums and
    half column sums, to form row sums and column sums,
    respectively. Consequently, row and column sums add to $M$.

  \item Pandiagonal sums.

    Let $a_{i,j}$ belong to a pandiagonal. Then, as we saw in Lemma \ref{franklinpandiaohlemma}, if $1 \leq j
    \leq n/2$, then $a_{n/2+i, n/2+j}$ also belongs to the pandiagonal.
    On the other hand if $j > n/2$, then $a_{n/2+i, j-n/2}$ belongs to
    the pandiagonal.   That is, every pandiagonal is made up of $n/2$ paired entries.

    Consider $1 \leq i, j \leq n/2$.  By Step
    \ref{subtractnarastep} of Algorithm \ref{narayanacosntructalgo1}, we
    get 
    \[ \begin{array}{llllllllll}
      a_{i,j} + a_{\frac{n}{2}+i, \frac{n}{2}+j} = N, \\
      a_{i, \frac{n}{2}+j}+ a_{\frac{n}{2}+i,j} = N. 
    \end{array}
    \]
 
    Consequently,  every pandiagonal sum  adds to $(n/2)N = M$.

  \end{enumerate}
  
Thus, Algorithm \ref{narayanacosntructalgo1} produces a Narayana square.

\end{proof}

\begin{lemma}
  Left and right  bend diagonal sums add to  $ M \pm n/2$. Top and bottom  bend diagonal sums add to  $ M \pm n^2/2$.
\end{lemma}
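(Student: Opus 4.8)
The plan is to reproduce, for Narayana squares, the bend-diagonal computation already carried out for Franklin squares in Section~\ref{franklinsection}, but feeding in the Narayana identities of Lemma~\ref{lemmanarayproof} in place of the Franklin identities of Lemma~\ref{prooffranklinalgolemma}. Read off a torus as in Figure~\ref{bent_diags}, a left (or right) bend diagonal of an $n\times n$ Narayana square consists of exactly $n/2$ cell pairs, each pair lying in one common column and in rows $i$ and $n+1-i$, i.e. equidistant across the horizontal axis; as one traverses the diagonal the shared column advances by one cyclically, so for $i=1,\dots,n/2$ the pair is $(a_{i,c_i},a_{n+1-i,c_i})$ with $c_i$ running through $n/2$ consecutive columns modulo $n$. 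A top (or bottom) bend diagonal is the transposed object: $n/2$ pairs each lying in one common row and in columns $j$ and $n+1-j$, equidistant across the vertical axis, with the shared row advancing by one cyclically.

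First I would handle the left bend diagonals. By Part~\ref{partequhorinaara} of Lemma~\ref{lemmanarayproof} each pair $(a_{i,c_i},a_{n+1-i,c_i})$ equals $N+m_i$ when $c_i$ is odd and $N-m_i$ when $c_i$ is even, where $m_i=\tfrac{n}{2}-1-2(i-1)$. The $n/2$ copies of $N$ contribute $\tfrac{n}{2}N=M$, so the diagonal sum is $M$ plus a residual $\sum_i\varepsilon_i m_i$ with $\varepsilon_i=\pm1$ dictated by the parity of $c_i$. Since the columns $c_i$ advance by one each step and $n$ is even, their parities strictly alternate (the wraparound from column $n$ to column $1$ preserves the alternation), so $\varepsilon_i=(-1)^{i+1}$ for a diagonal starting in an odd column and $\varepsilon_i=(-1)^{i}$ for one starting in an even column. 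The residual is then governed by
\[
\sum_{i=1}^{n/2}(-1)^{i+1}m_i=\left(\frac{n}{2}+1\right)\sum_{i=1}^{n/2}(-1)^{i+1}-2\sum_{i=1}^{n/2}(-1)^{i+1}i=0-2\left(-\frac{n}{4}\right)=\frac{n}{2},
\]
using that $n/2$ is even. Hence a left bend diagonal adds to $M+n/2$ when it starts in an odd column and to $M-n/2$ otherwise, i.e. to $M\pm n/2$; the right bend diagonals follow from the mirror-image argument (or the same computation with the columns decreasing). As a check, in $N1$ the left bend diagonal through columns $1$ through $4$ sums to $264=M+n/2$.

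Next I would treat the top bend diagonals identically, now invoking Part~\ref{partequvertinara} of Lemma~\ref{lemmanarayproof}: each pair $(a_{r_j,j},a_{r_j,n+1-j})$ equals $N+m_jn$ or $N-m_jn$ according to the parity of the shared row $r_j$, with $m_j=\tfrac{n}{2}-1-2(j-1)$. The $N$'s again sum to $M$, and since the rows $r_j$ advance by one cyclically their parities alternate, so the residual is $n\sum_{j=1}^{n/2}(-1)^{j+1}m_j=n\cdot\tfrac{n}{2}=\tfrac{n^2}{2}$ for a diagonal starting in an odd row and its negative otherwise. Thus the top, and by the same argument the bottom, bend diagonals add to $M\pm n^2/2$; in $N1$ the top bend diagonal across the four symmetric column pairs of the first four rows sums to $292=M+n^2/2$.

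The alternating-sum evaluations above are the only computation, and I have isolated them. The one genuine point of care --- and where the Narayana case diverges from the Franklin case --- is the sign bookkeeping: for Franklin squares the equidistant-pair signs of Lemma~\ref{prooffranklinalgolemma} are arranged so that the $m_i$ cancel and the diagonals hit $M$ exactly, whereas for Narayana squares the signs in Lemma~\ref{lemmanarayproof} depend on the parity of the \emph{column} (resp. row) rather than of the row index $i$, so the $m_i$ survive and leave the residual $\pm n/2$ (resp. $\pm n^2/2$). I therefore expect the main effort to go into verifying that the column (resp. row) indices genuinely alternate in parity along every bend diagonal, including through the cyclic wraparound, since that alternation is precisely what turns the pair-sum signs into the clean alternating sum computed above.
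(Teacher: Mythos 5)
Your proposal is correct and takes essentially the same route as the paper: the same decomposition of each bend diagonal into $n/2$ pairs equidistant across the horizontal (resp.\ vertical) axis, and the same appeal to Parts \ref{partequhorinaara} and \ref{partequvertinara} of Lemma \ref{lemmanarayproof}, with the surviving $\pm m_i$ (resp.\ $\pm m_j n$) terms summing to $\pm n/2$ (resp.\ $\pm n^2/2$). Your closed-form evaluation of the alternating sum, together with the observation that the shared column (resp.\ row) indices alternate in parity cyclically because $n$ is even, is a tidier packaging than the paper's term-by-term expansion, and it covers the wrapped diagonals ($n/2+1 < j \leq n$) uniformly, where the paper only asserts the cancellation and checks an example.
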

\begin{proof}

  Let $1 \leq j \leq n/2+1$, then, if $j$ is odd, then  by  Part \ref{partequhorinaara}  of Lemma
  \ref{lemmanarayproof},   the left
  bend diagonal sum starting with row $1$ and column $j$ adds as
  follows.
  \[ \begin{array}{llllllllllllll}
    \left [ (a_{1,j} + a_{n,j}) + (a_{2, j+1} + a_{n-1, j+1}) +  \cdots + (a_{\frac{n}{4}, j+\frac{n}{4}-1} + a_{\frac{3n}{4}+1, j+\frac{n}{4}-1}) \right ]\\ \\
    + \left [(a_{\frac{n}{4}+1, j+\frac{n}{4}}+a_{\frac{3n}{4}, j+\frac{n}{4}}) + \cdots + (a_{n/2,j+n/2-1}  + a_{n/2+1,j+n/2-1}) \right ]\\ \\
    =   \left [\left( N+(\frac{n}{2} - 1 ) \right ) +  \left ( N -  (\frac{n}{2} - 3 ) \right ) + \left  ( N +  (\frac{n}{2} - 5 ) \right ) + \cdots + (N+3) +  (N-1)\right ] \\ \\
    + \left [ (N-1) + (N+3) + \cdots +  \left (N -  (\frac{n}{2} - 3)\right ) + \left ( N+(\frac{n}{2} - 1 ) \right )
      \right ] \\ \\
    = 2  \left [(N+(\frac{n}{2} - 1 )) +  (N -  (\frac{n}{2} - 3 )) +  (N+(\frac{n}{2} - 5 )) + \cdots + (N+3) +  (N-1)\right ] \\ \\
    = 2 \left [ \frac{n}{4} N  +  ( \frac{n}{2} - 1 -  \frac{n}{2} + 3  + \frac{n}{2} - 5  + \cdots + 3  -1)  \right ] \\ \\
    = 2 \left [ \frac{n}{4} N + \frac{n}{8} \times 2 \right ]

    \\ \\  = \frac{n}{2}N + \frac{n}{2}  =  M + \frac{n}{2}. 
  \end{array} 
  \]

  A similar argument gives us that when $j$ is even, the  left  bend diagonals add to $M - n/2$.

  Now,  let  $n/2+1 < j \leq n$, the left bend diagonal sums are 
  \[ \begin{array}{llllllllllllll}
    (a_{1,j} + a_{n,j}) +  (a_{2,j} + a_{n-1, j+1})+ \cdots +  (a_{n-j+1, n} + a_{j,n}) \\ \\
    + (a_{n-j+2, 1} + a_{j-1,1}) + (a_{n-j+3, 2} + a_{j-2,2})+ \cdots +  (a_{\frac{n}{2}, j-\frac{n}{2}-1}+a_{\frac{n}{2}+1, j-\frac{n}{2}-1} ).
  \end{array} 
  \]
  Check that,  by  Part \ref{partequhorinaara}  of Lemma
  \ref{lemmanarayproof}, we get that these sums also add to $M \pm n/2$.

  For example, in the case of of N1 (see Table  \ref{tablenarayanstepfinal}), the seventh bend diagonal sum is 
  \[\begin{array}{llllllllllllll}
  (a_{1,7}+a_{8,7}) + (a_{2,8}+a_{7,8})+(a_{3,1}+a_{6,1})+ (a_{4,2}+a_{5,2}) \\ \\
  = (N+3)+(N-1)+(N-1) + (N+3) = 4N+4  = M + \frac{n}{2}.
  \end{array} 
  \]

  Thus, the left bend diagonals add to $M \pm n/2$, continuously. The
  proof that all right bend diagonals add to $M \pm n/2$, is similar
  to the case of left bend diagonals. The proof depends, mainly, on
  the fact that equidistant entries across the horizontal axis add to
  either $N+m_i$ or $N -m_i$, and all $m_i$ cancel in the final sum.

  Equidistant entries across the vertical axis is used to prove that
  the top and bottom bend diagonals add to magic sum. By Part \ref{partequvertinara}
  of Lemma \ref{lemmanarayproof}, pairs of equidistant
  entries across the vertical axis add to $N-m_in$ or $N+m_in$.

  For $1 \leq i \leq n/2+1$, let $i$ be odd, then the $i$-th top bend diagonal sum is given below.
  \[\begin{array}{llllllllll}  
  \left [ (a_{i,1} + a_{i,n})+ (a_{i+1,2} + a_{i+1,n-1})+ \cdots + (a_{i+\frac{n}{4}-1, \frac{n}{4}}+a_{i+\frac{n}{4}-1, \frac{3n}{4}+1}) \right ]  \\ \\
  + \left [ (a_{i+\frac{n}{4}, \frac{n}{4}+1} + a_{i+\frac{n}{4}, \frac{3n}{4}} ) + \cdots + (a_{i+\frac{n}{2}-1, \frac{n}{2}} + a_{i+\frac{n}{2}, \frac{n}{2}+1 }) \right ]  \\ \\
  = \left [(N + (\frac{n}{2}-1)n) + (N -(\frac{n}{2}-3)n)   + \cdots  (N+3n) + (N-n) \right ]  \\ \\
  + \left [ (N-n) + (N+3n) + (N -(\frac{n}{2}-3)n) + (\frac{n}{2}-1)n) + (N + (\frac{n}{2}-1)n)\right ]\\ \\
  = 2 \left [ \frac{n}{4} N + \frac{n}{8}(2n) \right ] \\ \\
  = \frac{n}{2}N  + \frac{n^2}{2} = M + \frac{n^2}{2}.
  \end{array}
  \]
  When $i$ is even, it can be checked that the the $i$-th top bend diagonal sum is $M - n^2/2$.
  Thus, the top bend diagonal sums add to $M \pm n^2/2$ when $1 \leq i \leq
  n/2+1$.

  For $n/2 + 1 < i \leq n$, the top bed diagonal sums are 
  \[\begin{array}{llllllllll}  
  (a_{i,1} + a_{i,n})+ (a_{i+1,2} + a_{i+1,n-1})+ \cdots (a_{n,n-i+1} + a_{n,i} )  \\ \\
  +(a_{1, n-i+2}+a_{1, i-1}) + (a_{2, n-i+3}+a_{2, i-2})+ \cdots +(a_{i-\frac{n}{2}-1, \frac{n}{2}}+ a_{i-\frac{n}{2}-1, \frac{n}{2}+1})
  \end{array}
  \]

  Again, by Part \ref{partequvertinara}
  of Lemma \ref{lemmanarayproof}, it
  can be checked that the top bed diagonal sums add to $M \pm n^2/2$.

  For example, in the case of N1 (see Table
  \ref{tablenarayanstepfinal}), the seventh top bend diagonal sum is
  \[\begin{array}{llllllllll}  
  (a_{7,1}+a_{7,8}) + (a_{8,2}+a_{8,7}) + (a_{1,3}+a_{1,6}) + (a_{2,4}+a_{2,5}) \\ \\
  = (N+3n)+ (N-n) + (N-n)+ (N+3n) = 4N + 4n = M+n^2/2. 
  \end{array}
  \]

  Consequently, the top bend diagonals add to $ M \pm n^2/2$,
  continuously. A similar proof, applying Part \ref{partequvertinara}
  of Lemma \ref{lemmanarayproof}, shows that the bottom bend diagonals
  add to $M \pm n^/2$, continuously.

\end{proof}

We proceed to show that the $N-i$ method is the same as Narayana
Pandit's Chadya-Chadaka method described in Section
\ref{introsection}.  We use the example of the Narayana square N2 to
demonstrate our derivation. This process is very similar to the
derivation of the Chadya and Chadaka squares of Franklin squares in
Section \ref{franklinsection}. Since $16$ numbers are filled at
every step, Table \ref{tablenarayan16partial} can be rewritten in
terms of multiples of $16$ as shown below.

{\scriptsize
  \begin{adjustwidth}{-3cm}{-1cm} 
    \[
    \begin{array}{cc} 

      \begin{array}{|c|c|c|c|c|c|c|c|} \hline
        - & - & - & - &-& - & -& - \\ \hline
        7 & 10+16 & 7+32 & 10+48 &7+64& 10+80 & 7+96& 10+112  \\ \hline
        - & - & - & - &-& - & -& -    \\ \hline
        5 & 12+16 & 5+32 & 12+48 &5+64& 12+80 & 5+96& 12+112  \\ \hline
        - & - & - & - &-& - & -& -    \\ \hline
        3 & 14+16 & 3+32 & 14+48 &3+64& 14+80 & 3+96& 14+112  \\ \hline
        - & - & - & - &-& - & -& -   \\ \hline
        1 & 16+16 & 1+32 & 16+48 &1+64& 16+80 & 1+96& 16+112  \\ \hline
        
      \end{array} & \hspace{.13in}
      \begin{array}{|c|c|c|c|c|c|c|c|} \hline
        8 & 9+16 & 8+32 & 9+48 &8+64& 9+80 & 8+96& 9+112  \\ \hline
        - & - & - & - &-& - & -&-  \\ \hline
        6 & 11+16 & 6+32 & 11+48 &6+64& 11+80 & 6+96& 11+112  \\ \hline
        - & - & - & - &-& - & -&-  \\ \hline
        4 & 13+16 & 4+32 & 13+48 &4+64& 13+80 & 4+96& 13+112  \\ \hline
        - & - & - & - &-& - & -&-  \\ \hline
        2 & 15+16 & 2+32 & 13+48 &2+64& 15+80 & 2+96& 15+112  \\ \hline
        - & - & - & - &-& - & -&-  \\ \hline
      \end{array}  \\ \\
      \begin{array}{|c|c|c|c|c|c|c|c|} \hline
        - & - & - & - &-& - & -&-  \\ \hline
        10 & 7+16 & 10+32 & 7+48 &10+64& 7+80 & 10+96& 7+112  \\ \hline
        - & - & - & - &-& - & -&-  \\ \hline
        12 & 5+16 & 12+32 & +5+48 &12+64& 5+80 & 12+96& 5+112  \\ \hline    
        - & - & - & - &-& - & -&-  \\ \hline
        14 & 3+16 & 14+32 & 3+48 &14+64& 3+80 & 14+96& 3+112  \\ \hline
        - & - & - & - &-& - & -&-  \\ \hline
        16 & 1+16 & 16+32 & 1+48 &16+64& 1+80 & 16+96& 1+112  \\ \hline
      \end{array} & \hspace{.13in}

      \begin{array}{|c|c|c|c|c|c|c|c|} \hline
        9 & 8+16 & 9+32 & 8+48 &9+64& 8+80 & 9+96& 8+112  \\ \hline
        - & - & - & - &-& - & -&-  \\ \hline
        11 & 6+16 & 11+32 & 6+ 48 & 11+64&  6+80 & 11+96& 6+112  \\ \hline    
        - & - & - & - &-& - & -&-  \\ \hline
        13 & 4+16 & 13+32 & 4+48 &13+64& 4+80 & 13+96& 4+112  \\ \hline    
        - & - & - & - &-& - & -&-  \\ \hline
        15 & 2+16 & 15+32 & 2+48 &15+64& 2+80 & 15+96& 2+112  \\ \hline 
        - & - & - & - &-& - & -&-  \\ \hline
      \end{array} 
    \end{array}
    \]
  \end{adjustwidth}
}

Since $N-i-rn = (n^2+1)-rn-i = (n^2-(
r+1)n) +(n+1-i)$, Table  \ref{tablenarayan16subtractfromN} becomes 

{\scriptsize
  \begin{adjustwidth}{-2.8cm}{-1cm} 
    \[
    \begin{array}{cc} 

      \begin{array}{|c|c|c|c|c|c|c|c|} \hline
        8+240 & 9+224 & 8+208 & 9+192 &8+176& 9+160 & 8+144& 9+128 \\ \hline
        7 & 10+16 & 7+32 & 10+48 &7+64& 10+80 & 7+96& 10+112  \\ \hline
        6+240 & 11+224 & 6+208 & 11+192 &6+176& 11+160 & 6+144& 6+128 \\ \hline 
        5 & 12+16 & 5+32 & 12+48 &5+64& 12+80 & 5+96& 12+112  \\ \hline
        4+240 & 13+224 & 4+208 & 13+192 &4+176& 13+160 & 4+144& 13+128\\ \hline
        3 & 14+16 & 3+32 & 14+48 &3+64& 14+80 & 3+96& 14+112  \\ \hline
        2+240 & 15 +224 & 2+208 & 15+192 &2+176& 15+160 & 2+144& 15+128 \\ \hline
        1 & 16+16 & 1+32 & 16+48 &1+64& 16+80 & 1+96& 16+112  \\ \hline
      \end{array} &
      \begin{array}{|c|c|c|c|c|c|c|c|} \hline
        8 & 9+16 & 8+32 & 9+48 &8+64& 9+80 & 8+96& 9+112  \\ \hline
        7+240 & 10+224 & 7+208 & 10+192 &7+176& 10+160 & 7+144& 10+128 \\ \hline
        6 & 11+16 & 6+32 & 11+48 &6+64& 11+80 & 6+96& 11+112  \\ \hline
        5+240 & 12+224 & 5+208 & 12+192 &5+176& 12+160 & 5+144& 12+128 \\ \hline
        4 & 13+16 & 4+32 & 13+48 &4+64& 13+80 & 4+96& 13+112  \\ \hline
        3+240 & 14+224 & 3+208 & 14+192 &3+176& 14+160 & 3+144& 14+128 \\ \hline
        2 & 15+16 & 2+32 & 13+48 &2+64& 15+80 & 2+96& 15+112  \\ \hline
        1+240 & 16+224 & 1+208 & 16+192 &1+176& 16+160 & 1+144& 16+128 \\ \hline
      \end{array}  \\ \\
      \begin{array}{|c|c|c|c|c|c|c|c|} \hline
        9+240 & 8+224 & 9+208 & 8+192 &9+176& 8+160 & 9+144& 8+128 \\ \hline
        10 & 7+16 & 10+32 & 7+48 &10+64& 7+80 & 10+96& 7+112  \\ \hline
        11+240 & 6+224 & 11+208 & 6+192 &11+176& 6+160 & 11+144& 6+128 \\ \hline
        12 & 5+16 & 12+32 & +5+48 &12+64& 5+80 & 12+96& 5+112  \\ \hline    
        13+240 & 4+224 & 13+208 & 4+192 &13+176& 4+160 & 13+144& 4+128 \\ \hline
        14 & 3+16 & 14+32 & 3+48 &14+64& 3+80 & 14+96& 3+112  \\ \hline
        15+240 & 2+224 & 15+208 & 2+192 &15+176& 2+160 & 15+144& 2+128 \\ \hline
        16 & 1+16 & 16+32 & 1+48 &16+64& 1+80 & 16+96& 1+112  \\ \hline
      \end{array} & 

      \begin{array}{|c|c|c|c|c|c|c|c|} \hline
        9 & 8+16 & 9+32 & 8+48 &9+64& 8+80 & 9+96& 8+112  \\ \hline
        10+240 & 7+224 & 10+208 & 7+192 &10+176& 7+160 & 10+144& 7+128 \\ \hline
        11 & 6+16 & 11+32 & 6+ 48 & 11+64&  6+80 & 11+96& 6+112  \\ \hline    
        12+240 & 5+224 & 12+208 & 5+192 &12+176& 5+160 & 12+144& 5+128 \\ \hline
        13 & 4+16 & 13+32 & 4+48 &13+64& 4+80 & 13+96& 4+112  \\ \hline    
        14+240 & 3+224 & 14+208 & 3+192 &14+176& 3+160 & 14+144& 3+128 \\ \hline
        15 & 2+16 & 15+32 & 2+48 &15+64& 2+80 & 15+96& 2+112  \\ \hline 
        16+240 & 1+224 & 16+208 & 1+192 &16+176& 1+160 & 16+144& 1+128 \\ \hline
      \end{array} 
    \end{array}
    \]
  \end{adjustwidth}
}

Consequently, the square can be split as the Chadya and flipped
Chadaka of N2.

{\scriptsize
  \begin{adjustwidth}{-3cm}{-1cm} 
    \[    
    \begin{array}{cccccccc}
      \mbox{Chadya} && \mbox{Flipped Chadaka} \\ \\
      \begin{array}{|c|c|c|c|c|c|c|c|c|c|c|c|c|c|c|c|} \hline
        8 &  9 & 8 & 9 & 8 & 9 & 8 & 9 & 8 & 9 & 8 & 9 & 8 & 9 & 8 & 9\\  \hline
        7 & 10 & 7 & 10 &7 & 10 &7 & 10 &7 & 10 &7 & 10 &7 & 10 & 7 & 10 \\ \hline
        6 & 11 &6 & 11 &6 & 11 &6 & 11 &6 & 11 &6 & 11 &6 & 11 &6 & 11  \\ \hline
        5 & 12 &5 & 12 &5 & 12 &5 & 12 &5 & 12 &5 & 12 &5 & 12 &5 & 12  \\ \hline
        4 & 13 &4 & 13 &4 & 13 &4 & 13 &4 & 13 &4 & 13 &4 & 13 &4 & 13  \\ \hline
        3 & 14 &3 & 14 &3 & 14 &3 & 14 &3 & 14 &3 & 14 &3 & 14 &3 & 14  \\ \hline
        2 & 15 &2 & 15 & 2 & 15 &2 & 15 & 2 & 15 &2 & 15 & 2 & 15 &2 & 15 \\ \hline
        1 & 16 &1 & 16 &1 & 16 &1 & 16 &1 & 16 &1 & 16 &1 & 16 &1 & 16  \\ \hline
        9 &  8 & 9 &  8 & 9 &  8 & 9 &  8 & 9 &  8 & 9 &  8 &9 &  8 & 9 & 8 \\ \hline
        10 & 7 &10 & 7 &10 & 7 &10 & 7 &10 & 7 &10 & 7 & 10 & 7 & 10 & 7  \\ \hline
        11 & 6 & 11 & 6 &11 & 6 & 11 & 6 &11 & 6 & 11 & 6 &11 & 6 & 11 & 6   \\ \hline
        12& 5 &12& 5 &12& 5 &12& 5 &12& 5 &12& 5 &12& 5 &12& 5  \\ \hline
        13& 4 &13& 4 &13& 4 &13& 4 &13& 4 & 13& 4 &13& 4 &13& 4  \\ \hline
        14& 3  &14& 3  &14& 3  &14& 3  &14& 3  &14& 3  &14& 3  &14& 3   \\ \hline
        15& 2 &15& 2 &15& 2 &15& 2 &15& 2 &15& 2 & 15& 2 &15& 2 \\ \hline
        16 & 1 & 16 & 1 &16 & 1 & 16 & 1 &16 & 1 & 16 & 1 &16 & 1 & 16 & 1  \\ \hline
      \end{array} 
      &+&
      
      \begin{array}{|c|c|c|c|c|c|c|c|c|c|c|c|c|c|c|c|} \hline
        240 & 224 & 208 & 192 & 176 & 160 & 144 & 128 & 0 & 16 & 32 & 48 & 64 & 80 & 96 & 112 \\ \hline
        0 & 16 & 32 & 48 & 64 & 80 & 96 & 112 & 240 & 224 & 208 & 192 & 176 & 160 & 144 & 128 \\ \hline
        240 & 224 & 208 & 192 & 176 & 160 & 144 & 128 & 0 & 16 & 32 & 48 & 64 & 80 & 96 & 112 \\ \hline
        0 & 16 & 32 & 48 & 64 & 80 & 96 & 112 & 240 & 224 & 208 & 192 & 176 & 160 & 144 & 128 \\ \hline
        240 & 224 & 208 & 192 & 176 & 160 & 144 & 128 & 0 & 16 & 32 & 48 & 64 & 80 & 96 & 112 \\ \hline
        0 & 16 & 32 & 48 & 64 & 80 & 96 & 112 & 240 & 224 & 208 & 192 & 176 & 160 & 144 & 128 \\ \hline
        240 & 224 & 208 & 192 & 176 & 160 & 144 & 128 & 0 & 16 & 32 & 48 & 64 & 80 & 96 & 112 \\ \hline
        0 & 16 & 32 & 48 & 64 & 80 & 96 & 112 & 240 & 224 & 208 & 192 & 176 & 160 & 144 & 128 \\ \hline
        240 & 224 & 208 & 192 & 176 & 160 & 144 & 128 & 0 & 16 & 32 & 48 & 64 & 80 & 96 & 112 \\ \hline
        0 & 16 & 32 & 48 & 64 & 80 & 96 & 112 & 240 & 224 & 208 & 192 & 176 & 160 & 144 & 128 \\ \hline
        240 & 224 & 208 & 192 & 176 & 160 & 144 & 128 & 0 & 16 & 32 & 48 & 64 & 80 & 96 & 112 \\ \hline
        0 & 16 & 32 & 48 & 64 & 80 & 96 & 112 & 240 & 224 & 208 & 192 & 176 & 160 & 144 & 128 \\ \hline
        240 & 224 & 208 & 192 & 176 & 160 & 144 & 128 & 0 & 16 & 32 & 48 & 64 & 80 & 96 & 112 \\ \hline
        0 & 16 & 32 & 48 & 64 & 80 & 96 & 112 & 240 & 224 & 208 & 192 & 176 & 160 & 144 & 128 \\ \hline
        240 & 224 & 208 & 192 & 176 & 160 & 144 & 128 & 0 & 16 & 32 & 48 & 64 & 80 & 96 & 112 \\ \hline
        0 & 16 & 32 & 48 & 64 & 80 & 96 & 112 & 240 & 224 & 208 & 192 & 176 & 160 & 144 & 128 \\ \hline
      \end{array} 

    \end{array}
    \]
  \end{adjustwidth}
}

Thus, the $N-i$ method is the same as the original Chadya-Chadaka
method of Narayana (see Example \ref{16x16narayannamaste}). However, the $N-i$ method can be easily modified
to create new Narayana squares.

\begin{exm} Constructing New Narayana Square.

  We first enter the numbers $1$ to $32$ as shown below. We begin in the
  second column of the last row of the square.  This step is slightly
  different from Step 1 in Algorithm \ref{narayanacosntructalgo1}.

  {\scriptsize
    \[
    \begin{array}{cc} 

      \begin{array}{|c|c|c|c|} \hline
        - & - & - & -   \\ \hline
        27 & 6 & 11 & 22   \\ \hline
        - & - & - & - \\ \hline
        25 & 8 & 9 & 24   \\ \hline

      \end{array} & \hspace{.13in}
      \begin{array}{|c|c|c|c|} \hline
        28& 5 & 12& 21   \\ \hline
        - & - & - & - \\ \hline
        26& 7 & 10 & 23   \\ \hline
        - & - & - & -\\  \hline 
      \end{array}  \\ \\

      \begin{array}{|c|c|c|c|} \hline
        - & - & - & -  \\ \hline
        30 & 3 & 14& 19  \\ \hline

        - & - & - & - \\ \hline
        32 & 1 & 16 & 17   \\ \hline

      \end{array} & \hspace{.13in}
      \begin{array}{|c|c|c|c|} \hline
        29 & 4 & 13& 20   \\ \hline
        - & - & - & -   \\ \hline
        31 & 2 & 15 & 18\\ \hline
        - & - & - & - \\ \hline
      \end{array}

    \end{array}
    \]
  }

  Next we do the necessary subtractions from $N$. This  step is the same as in Algorithm \ref{narayanacosntructalgo1}.

  {\scriptsize
    \[
    \begin{array}{cc} 

      \begin{array}{|c|c|c|c|} \hline
        N-29 & N-4 & N-13& N-20   \\ \hline
        27 & 6 & 11 & 22   \\ \hline
        N-31 & N-2 & N-15 & N-18\\ \hline
        25 & 8 & 9 & 24   \\ \hline

      \end{array} & \hspace{.13in}
      \begin{array}{|c|c|c|c|} \hline
        28& 5 & 12& 21   \\ \hline
        N-30 & N-3 & N-14& N-19  \\ \hline
        26& 7 & 10 & 23   \\ \hline
        N-32 & N-1 & N-16 & N-17   \\ \hline
        
      \end{array}  \\ \\

      \begin{array}{|c|c|c|c|} \hline
        N-28& N-5 & N-12& N-21   \\ \hline
        30 & 3 & 14& 19  \\ \hline
        N-26& N-7 & N-10 & N-23   \\ \hline
        32 & 1 & 16 & 17   \\ \hline

      \end{array} & \hspace{.13in}
      \begin{array}{|c|c|c|c|} \hline
        29 & 4 & 13& 20   \\ \hline
        N-27 & N-6 & N-11 & N-22   \\ \hline
        31 & 2 & 15 & 18\\ \hline
        N-25 & N-8 & N-9 & N-24   \\ \hline
      \end{array}
    \end{array}
    \]
  }

  Thus, we get a new square which can be checked to be a new Narayana square.
  {\scriptsize
    \[
    \begin{array}{ccccccc}
      \mbox{New Narayana square} \\ \\
      \begin{array}{|c|c|c|c|c|c|c|c|} \hline
        36 & 61 & 52 & 45 & 28 & 5 & 12 & 21  \\ \hline
        27 & 6 &  11 &  22   & 35 & 62 & 51 & 46  \\ \hline
        34 & 63 & 50 & 47 & 26 & 7 & 10 & 23  \\ \hline
        25 & 8 & 9 & 24 & 33 & 64 & 49 & 48  \\ \hline
        37 & 60 & 53 & 44 & 29 & 4 & 13 & 20  \\ \hline
        30 & 3 & 14 & 19 & 38 & 59 & 54 & 43  \\ \hline
        39 & 58 & 55 & 42 & 31 & 2 & 15 & 18  \\ \hline
        32 & 1 & 16 &  17 & 40 & 57 & 56 & 41  \\ \hline
      \end{array} 
    \end{array}
    \]
  }

  Also verify that the new Narayana square has the following
  additional properties: Half row sums add either to $(n/4)m$ or
  $M-(n/4)m$; Half column sums add to $M/2 \pm n^2/8$; Left and right
  bend diagonal sums add to $ M \pm n/2$; Top and bottom bend diagonal
  sums add to $ M \pm n^2/2$.

\end{exm}
%%%%%%%%%%%%%%%%%%%%%%%%%%%%%%%%%%%%%%%%%%%%%%%%%%

\end{document}